\numberwithin{equation}{section}
\newtheorem{thm}{Theorem}[section]
\newtheorem{cor}[thm]{Corollary}
\newtheorem{lem}[thm]{Lemma}
\newtheorem{prop}[thm]{Proposition}
\newtheorem{claim}[thm]{Claim}
\newtheorem{defn}[thm]{Definition}
\theoremstyle{definition}
\newtheorem{exm}[thm]{Example}
\newtheorem{rem}[thm]{Remark}
\newcommand{\cO}{\mathcal O}
\newcommand{\cU}{\mathcal U}
\newcommand{\cY}{\mathcal Y }
\newcommand{\bR}{\bf R}
\newcommand{\C}{\mathbb C}
\newcommand{\HH}{\mathbb{H}}
\renewcommand{\H}{\HH}
\newcommand{\PP}{\mathbb P}
\renewcommand{\P}{\PP}
\newcommand{\R}{\mathbb R}
\newcommand{\Z}{\mathbb Z}
\newcommand{\gh}{\mathfrak{h}}
\newcommand{\ra}{\rightarrow}
\newcommand{\bs}{\bigskip}
\newcommand{\Hom}{{\mbox{Hom}}}
\newcommand{\pr}{{\mbox{pr}}}
\newcommand{\Aut}{{\mbox{Aut~}}}
\newcommand{\Pic}{{\mbox{Pic}}}
\newcommand{\Res}{{\mbox{Res~}}}
\newcommand{\id}{{\mbox{id}}}
\newcommand{\kerr}{{\mbox{ker~}}}
\renewcommand{\ker}{\kerr}
\newcommand{\bl}{\hskip.1in}
\def\question#1{{}}
\newcommand{\Cx}{\mathbb{C}^\times}
\DeclareMathOperator*{\im}{Im}
\DeclareMathOperator*{\Ext}{Ext}
\newcommand{\lra}{\leftrightarrow}
\newcommand{\xra}{\xrightarrow}
\newcommand{\quash}[1]{}
\author{Jingyue Chen and Bong H. Lian}
\title{CY Principal Bundles over Compact K\"ahler Manifolds}
\begin{document}

\maketitle

\begin{abstract}
A CY bundle on a connected compact complex manifold $X$ was a crucial ingredient in constructing differential systems for period integrals in \cite{LY}, by lifting line bundles from the base $X$ to the total space. A question was therefore raised as to whether there exists such a bundle that supports the liftings of {\it all} line bundles from $X$, simultaneously. This was a key step for giving a uniform construction of differential systems for arbitrary complete intersections in $X$. In this paper, we answer the existence question in the affirmative if $X$ is assumed to be K\"ahler, and also in general if the Picard group of $X$ is assumed to be {discrete}. 
Furthermore, we prove a rigidity property of CY bundles if the principal group is an algebraic torus, showing that such a CY bundle is essentially determined by its character map. 
\end{abstract}

\tableofcontents

\pagenumbering{arabic}
\addtocounter{page}{0}

\section{Introduction}
\subsection{Background}
Let $G,H$ be complex Lie groups and $X$ be a connected compact complex manifold with a $G$-action. A $G$-equivariant principal $H$-bundle, denoted by $M\equiv(G,H-M{\buildrel\pi\over\ra} X)$, is a holomorphic principal $H$-bundle $M$ over $X$ equipped with an action
$$
G\times H\times M\ra M,~~(g,h,m)\mapsto gmh^{-1}.
$$
Since the $H$-action is assumed to be principal, the projection $\pi$ induces an isomorphism $M/H\simeq X$.

Given such an $M$, it is easy to show that there is an equivalence of categories
$$
\{\mbox{$G$-equiv. vector bundles on $X$}\}\leftrightarrow\{\mbox{$(G\times H)$-equiv. vector bundles on $M$}\}.
$$
Restricting this to line bundles, we get a natural isomorphism
$$
\Pic_G(X)\simeq\Pic_{G\times H}(M).
$$

Let $\chi\in\Hom(H,\C^\times)\equiv\widehat{H}$ be a holomorphic character of $H$, and $\C_\chi$ be the corresponding 1-dimensional representation.
Then $G\times H$ acts on $M\times\C_\chi$ as a $G$-equivariantly trivial bundle which is not necessarily $H$-equivariantly trivial. Composing with the Picard group isomorphism above one gets a canonical homomorphism
\begin{align*}
\lambda_M:\widehat{H}&\ra\Pic_{G\times H}(M)\ra\Pic_G(X)\\
\chi\quad&\mapsto \quad M\times\C_\chi~~\mapsto L_\chi:=(M\times\C_\chi)/H.
\end{align*}
We call this {\it the character map} of $M$.

This map allows us to describe the line bundles in the image of $\lambda_M$ together with their sections purely in terms of 1-dimensional representations of $H$. For example, one can show that there is a canonical $G$-equivariant isomorphism \cite{LY}
$$
\Gamma(X,L_\chi)\simeq\cO_\chi(M):=\{f\in\cO(M)|f(mh^{-1})=\chi(h)f(m),\forall m\in M\}.
$$

\begin{defn} [{\cite{LY}}]
We say that $M$ is a CY bundle if it admits a CY structure $(\C\omega,\chi)$. Namely, $\chi\in\widehat{H}$ is a holomorphic character of $H$, and $\omega$ is a $G$-invariant nowhere vanishing holomorphic top form on $M$ such that
$$
\Gamma_h\omega=\chi(h)\omega,~~~h\in H.
$$
\end{defn}

A prototype example of this definition is given by the following example due to Calabi.

\begin{exm}[\cite{Ca}, 1979]
Let $M:=K_X^\times$ be the complement of the zero section of $K_X$. Let $\omega=dz_w\wedge dw_1\wedge\cdots\wedge dw_d$, where $w$ is a local coordinate chart and $z_w$ is the coordinate induced by $w$ along the local fibers of $K_X$. Then $\omega$ is a globally defined CY structure on the bundle $(\Aut X,\,\C^\times -M\ra X)$ with $\chi=\id_{\C^\times}$.
\end{exm}

Let us mention a number of important applications of this notion. First, we note that CY structures, if  exist, can be classified by a coset of the kernel of the character map.

\begin{thm}[Classification of CY structures \cite{LY}]\label{classify CY structures} Given a principal $H$-bundle over a compact complex manifold $X$, there is a bijection
\[
\{\text{CY structures on $M$} \} 
\longleftrightarrow
 \lambda_{M}^{-1}([K_X]),~~(\C\omega,\chi)\lra\chi\chi_\gh  
\]
where $\gh$ denotes the Lie algebra of $H$ and $\chi_\gh$ is the 1-dimensional representation $\wedge^{\dim \gh}\gh$ induced by the adjoint representation of $H$.
\end{thm}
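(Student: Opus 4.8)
The plan is to translate the data of a CY structure into a trivialization statement inside $\Pic_{G\times H}(M)$, and then transport it through the Picard isomorphism $\Pic_{G\times H}(M)\simeq\Pic_G(X)$. Everything rests on an \emph{equivariant} adjunction formula for the canonical bundle $K_M$ of the total space.

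\textbf{Step 1 (Equivariant adjunction).} First I would trivialize the vertical tangent bundle $T_{M/X}$ using the fundamental vector fields of the principal action: the assignment $\xi\mapsto\xi_M$ gives a $G\times H$-equivariant isomorphism $T_{M/X}\cong M\times\gh$, where $H$ acts on $\gh$ through the adjoint representation. Taking top exterior powers and dualizing yields $K_{M/X}\cong M\times\C_{\chi_\gh^{-1}}$ as $G\times H$-equivariant line bundles, and since $M\times\C_{\chi_\gh^{-1}}=\pi^*L_{\chi_\gh^{-1}}$, the relative canonical bundle corresponds to $L_{\chi_\gh^{-1}}$ under the Picard isomorphism. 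Combining this with $K_M\cong\pi^*K_X\otimes K_{M/X}$, I conclude that $K_M\in\Pic_{G\times H}(M)$ corresponds to $[K_X]\otimes[L_{\chi_\gh}]^{-1}$ in $\Pic_G(X)$.

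\textbf{Step 2 (CY structure $=$ equivariant trivialization).} Given a candidate character $\chi$, I would form the twisted form $\tilde\omega:=\omega\otimes 1_{\chi^{-1}}$, where $1_{\chi^{-1}}$ is the constant section of $\pi^*L_{\chi^{-1}}=M\times\C_{\chi^{-1}}$; thus $\tilde\omega$ is a section of $K_M\otimes\pi^*L_{\chi^{-1}}$. A direct weight computation, using that $\Gamma_h 1_{\chi^{-1}}=\chi(h)^{-1}1_{\chi^{-1}}$ and that $G$ acts trivially on the fiber $\C_{\chi^{-1}}$, shows that the two conditions ``$\omega$ is $G$-invariant'' and ``$\Gamma_h\omega=\chi(h)\omega$'' together are equivalent to ``$\tilde\omega$ is a nowhere-vanishing $G\times H$-invariant section'', i.e. to a $G\times H$-equivariant trivialization of $K_M\otimes\pi^*L_{\chi^{-1}}$. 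By Step 1 this bundle corresponds to $[K_X]\otimes[L_{\chi\chi_\gh}]^{-1}$ in $\Pic_G(X)$, so such a trivialization exists precisely when $\lambda_M(\chi\chi_\gh)=[K_X]$, that is, when $\chi\chi_\gh\in\lambda_M^{-1}([K_X])$. In particular the proposed correspondence $(\C\omega,\chi)\lra\chi\chi_\gh$ does land in the target set.

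\textbf{Step 3 (Bijectivity).} For surjectivity, given $\mu\in\lambda_M^{-1}([K_X])$ I would set $\chi:=\mu\chi_\gh^{-1}$, so that $\chi\chi_\gh=\mu$ meets the condition of Step 2; a $G\times H$-invariant trivialization $\tilde\omega$ then exists, and untwisting produces a CY structure mapping to $\mu$. For injectivity, two CY structures with the same image have the same $\chi$ (as $\chi_\gh$ is fixed), so their defining forms $\omega_1,\omega_2$ are both nowhere-vanishing $G$-invariant sections of $K_M$ of the same $H$-weight; their ratio is then a nowhere-vanishing $G\times H$-invariant holomorphic function on $M$, hence descends to a nowhere-vanishing holomorphic function on the compact manifold $X$, hence is constant, giving $\C\omega_1=\C\omega_2$. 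The main obstacle is Step 1: the real content is pinning down the \emph{equivariant} structure, not merely the underlying line bundle. One must determine the exact character by which $H$ acts on the fundamental vector fields under the left action $m\mapsto mh^{-1}$ — this is precisely what yields $\chi_\gh=\wedge^{\dim\gh}\gh$ rather than its inverse or a coadjoint twist — and verify that $M\times\C_\chi\cong\pi^*L_\chi$ is $G\times H$-equivariant, so that the adjunction identity can be read off inside $\Pic_{G\times H}(M)$; every sign and convention must be tracked consistently through to the final character $\chi\chi_\gh$.
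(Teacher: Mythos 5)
You should note at the outset that the paper does not actually prove this theorem: it is quoted verbatim from \cite{LY} (Theorem \ref{classify CY structures} carries the citation, and no argument appears in the text), so there is no in-paper proof to compare against. Judged on its own merits, your argument is correct and is essentially the standard one, equivalent in content to the proof in \cite{LY}: there the isomorphism $K_X\simeq L_{\chi\chi_\gh}$ is produced by contracting $\omega$ against the fundamental vector fields of a basis of $\gh$ to obtain a horizontal top form transforming by $\chi\chi_\gh$, which is precisely your factorization $K_M\simeq \pi^*K_X\otimes K_{M/X}$ combined with the identification $K_{M/X}\simeq \pi^*L_{\chi_\gh^{-1}}$, just packaged as a single contraction rather than as a statement in $\Pic_{G\times H}(M)$. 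The one step you defer --- pinning down the equivariant structure on $T_{M/X}$ --- does work out with the paper's conventions: for the action $\Gamma_h\colon m\mapsto mh^{-1}$ one computes $(\Gamma_h)_*\xi_M=(\mathrm{Ad}_h\xi)_M$, so $T_{M/X}\simeq M\times\gh$ with $H$ acting by the adjoint representation, $\wedge^{\dim\gh}T_{M/X}\simeq M\times\C_{\chi_\gh}\simeq\pi^*L_{\chi_\gh}$ under the paper's convention $h\cdot(m,c)=(mh^{-1},\chi(h)c)$, and hence $K_{M/X}\simeq\pi^*L_{\chi_\gh}^{-1}$ as you claim; the rest of your Steps 2--3 (equivariant trivialization $\Leftrightarrow$ triviality in $\Pic_{G\times H}(M)\simeq\Pic_G(X)$, and uniqueness of $\C\omega$ from the compactness of $X$) is sound. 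To make the write-up complete you would only need to include that one-line $\mathrm{Ad}$ computation rather than flagging it as an obstacle.
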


Next, if such a structure exists, one gets a bundle version of the adjunction formula. It allows us to describe $K_X$ in purely functional terms.

\begin{thm}[Adjunction for bundle \cite{LY}]\label{adjunction} 
Let $(\C\omega,\chi)$ be a CY structure on $M$. Then there is a canonical isomorphism
$$
K_X\simeq L_{\chi\chi_\gh}.
$$
\end{thm}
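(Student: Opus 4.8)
The plan is to establish the isomorphism first $H$-equivariantly (indeed $G\times H$-equivariantly) on the total space $M$, and then descend to $X$ via the Picard isomorphism $\Pic_G(X)\simeq\Pic_{G\times H}(M)$ recalled above. The heart of the argument is a comparison of the canonical bundle $K_M$ with the pullback $\pi^*K_X$; the discrepancy is governed by the relative canonical bundle of the principal fibration $\pi:M\ra X$, which is precisely where the adjoint character $\chi_\gh$ enters.

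First I would write down the relative cotangent sequence of the fibration $\pi:M\ra X$,
$$
0\ra\pi^*\Omega^1_X\ra\Omega^1_M\ra\Omega^1_{M/X}\ra 0,
$$
and pass to top exterior powers to obtain a canonical isomorphism
$$
K_M\simeq\pi^*K_X\otimes\det\Omega^1_{M/X}.
$$
Since $M$ is a principal $H$-bundle, the infinitesimal action of $\gh=\mbox{Lie}(H)$ trivializes the vertical tangent bundle, giving an $H$-equivariant identification $T_{M/X}\simeq M\times\gh$ in which $H$ acts on the fiber $\gh$ by the adjoint representation. Taking determinants and dualizing yields an $H$-equivariant trivialization $\det\Omega^1_{M/X}\simeq M\times\C_{\chi_\gh^{-1}}$, where $\chi_\gh=\wedge^{\dim\gh}\gh$ is the determinant of the adjoint representation. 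Hence, $H$-equivariantly,
$$
K_M\simeq\pi^*K_X\otimes(M\times\C_{\chi_\gh^{-1}}).
$$

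Next I would use the CY form to trivialize $K_M$. Because $\omega$ is a nonvanishing holomorphic top form with $\Gamma_h\omega=\chi(h)\omega$, its $H$-weight $\chi$ determines an $H$-equivariant trivialization $K_M\simeq M\times\C_\chi$, and the $G$-invariance of $\omega$ makes this trivialization $G$-equivariant as well. Combining with the previous display and cancelling $K_M$ gives a $G\times H$-equivariant isomorphism on $M$,
$$
\pi^*K_X\simeq M\times\C_{\chi\chi_\gh}.
$$
Under the equivalence $\Pic_G(X)\simeq\Pic_{G\times H}(M)$, the left-hand side corresponds to $K_X$ with its natural $G$-equivariant structure, while $M\times\C_{\chi\chi_\gh}$ descends by definition to $L_{\chi\chi_\gh}=(M\times\C_{\chi\chi_\gh})/H$. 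Therefore $K_X\simeq L_{\chi\chi_\gh}$, as claimed.

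I expect the main obstacle to be the bookkeeping of equivariant structures, concentrated in the identification $\det\Omega^1_{M/X}\simeq M\times\C_{\chi_\gh^{-1}}$: one must verify that the canonical trivialization of the vertical bundle of a principal $H$-bundle carries exactly the coadjoint determinant character, with the correct sign, since for non-unimodular $H$ this character is nontrivial and is the sole origin of the $\chi_\gh$ twist. Once this identification and the trivialization by $\omega$ are seen to be simultaneously $G\times H$-equivariant, descent is automatic. Finally, I note that the statement also follows formally from Theorem \ref{classify CY structures}: a CY structure $(\C\omega,\chi)$ lies in the domain of the bijection whose image is $\lambda_M^{-1}([K_X])$, so $\lambda_M(\chi\chi_\gh)=[K_X]$, and since $\lambda_M(\chi\chi_\gh)=L_{\chi\chi_\gh}$ by definition of the character map, we again recover $K_X\simeq L_{\chi\chi_\gh}$; the direct computation above, however, is exactly what underlies that classification.
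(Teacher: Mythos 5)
The paper does not prove this theorem itself --- it is quoted from \cite{LY} --- but your argument is correct and is essentially the proof given there: trivialize $K_M$ equivariantly by $\omega$ with weight $\chi$, identify $\det\Omega^1_{M/X}$ with $M\times\C_{\chi_\gh^{-1}}$ via the fundamental vector fields of the $H$-action, and descend $\pi^*K_X\simeq M\times\C_{\chi\chi_\gh}$ through $\Pic_{G\times H}(M)\simeq\Pic_G(X)$. The sign bookkeeping you flag does close up under the paper's conventions (right action $h\cdot m=mh^{-1}$ and $\Gamma_h\omega=\chi(h)\omega$), so no gap remains.
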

As a consequence, we have the following corollary:
\begin{cor}
There is a canonical embedding of the pluri-(anti)canonical ring of $X$ as a subring of the ring of holomorphic functions $\cO(M)$.
\end{cor}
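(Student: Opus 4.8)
The plan is to combine the two preceding theorems. Writing $\psi_0 := \chi\chi_\gh \in \widehat H$, Theorem \ref{adjunction} supplies a canonical isomorphism $K_X \simeq L_{\psi_0}$. Since $\lambda_M$ is a homomorphism of abelian groups (multiplication of characters on $\widehat H$, tensor product on $\Pic_G(X)$), one gets canonical isomorphisms $L_{\psi_1}\otimes L_{\psi_2}\simeq L_{\psi_1\psi_2}$, and in particular $K_X^{\otimes n}\simeq L_{\psi_0}^{\otimes n}\simeq L_{\psi_0^n}$ for every $n\in\Z$. Composing with the canonical identification $\Gamma(X,L_\psi)\simeq\cO_\psi(M)$ recalled above, I would produce for each $n$ a canonical isomorphism
$$
\Gamma(X,K_X^{\otimes n})\;\simeq\;\cO_{\psi_0^n}(M)\subset\cO(M),
$$
and taking the direct sum over $n\in\Z$ defines a graded linear map from the pluri-(anti)canonical ring $R:=\bigoplus_{n\in\Z}\Gamma(X,K_X^{\otimes n})$ into $\cO(M)$.

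First I would check that this map is a ring homomorphism. A section of $L_\psi$ corresponds to a semi-invariant function $f\in\cO_\psi(M)$, and if $f\in\cO_{\psi}(M)$, $g\in\cO_{\psi'}(M)$ then
$$
(fg)(mh^{-1})=f(mh^{-1})g(mh^{-1})=\psi(h)\psi'(h)\,(fg)(m),
$$
so $fg\in\cO_{\psi\psi'}(M)$; thus pointwise multiplication in $\cO(M)$ carries $\cO_{\psi}(M)\cdot\cO_{\psi'}(M)$ into $\cO_{\psi\psi'}(M)$. The content is then to verify that, under the canonical identification $\Gamma(X,L_\psi)\simeq\cO_\psi(M)$, the multiplication $\Gamma(X,K_X^{\otimes m})\otimes\Gamma(X,K_X^{\otimes n})\to\Gamma(X,K_X^{\otimes(m+n)})$ of the canonical ring corresponds to this pointwise product. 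This is a matter of unwinding definitions: tensoring sections of bundles of the form $(M\times\C_\psi)/H$ corresponds to multiplying the representing functions, and since the isomorphisms $K_X^{\otimes n}\simeq L_{\psi_0^n}$ are all induced by the single isomorphism $K_X\simeq L_{\psi_0}$, they are automatically compatible with taking tensor powers. Hence $R\to\cO(M)$ respects products and is a homomorphism of $\Z$-graded rings onto its image $\bigoplus_n\cO_{\psi_0^n}(M)$.

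It remains to prove injectivity, which is where the real content lies. Suppose $\sum_n f_n=0$ in $\cO(M)$ with $f_n\in\cO_{\psi_0^n}(M)$ and only finitely many terms nonzero. Applying the $H$-action gives $\sum_n \psi_0^n(h)\,f_n=0$ for every $h\in H$; by the classical linear independence of distinct characters, each nonzero $f_n$ is forced to vanish, provided the characters $\{\psi_0^n\}$ that occur are pairwise distinct. Thus the subspaces $\cO_{\psi_0^n}(M)$ sit in direct sum inside $\cO(M)$ and the homomorphism $R\to\cO(M)$ is injective, i.e. an embedding. The main obstacle is exactly this distinctness: the argument is clean precisely when $\psi_0$ has infinite order in $\widehat H$ (equivalently, when the characters $\psi_0^n$, $n\in\Z$, are pairwise distinct). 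When $H$ is an algebraic torus the group $\widehat H$ is torsion-free, so distinctness is automatic unless $\psi_0$ is trivial; in the general setting one should either record this non-torsion hypothesis or note that on the pluricanonical part the distinctness holds under the usual positivity of $K_X$.
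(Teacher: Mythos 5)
Your proposal is correct and is essentially the argument the paper intends: the corollary is stated there without proof as an immediate consequence of the adjunction isomorphism $K_X\simeq L_{\chi\chi_\gh}$ together with the identification $\Gamma(X,L_\psi)\simeq\cO_\psi(M)$, which is exactly what you unwind, with multiplicativity coming from pointwise products of semi-invariants and injectivity from linear independence of distinct characters. Your observation that injectivity of the total map requires the characters $\psi_0^n$ to be pairwise distinct (i.e.\ $\chi\chi_\gh$ non-torsion in $\widehat H$ -- automatic for $H$ an algebraic torus unless $\psi_0$ is trivial) is a genuine caveat that the paper glosses over, and is worth recording.
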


\subsection{Motivations and Applications}
The aim of this paper is to study the existence and uniqueness questions for CY bundles. We are particularly interested in those CY bundles whose character map $\lambda_M$ is surjective. In this case, {\it all line bundles on $X$ can be simultaneously realized by $H$-characters.}

This was an important open question raised in \cite{LY}. Such a structure would fill a crucial step in the construction of tautological systems for period integrals of arbitrary complete intersections in $X$. In addition, it provides a uniform treatment for all line bundles on $X$.

\bs

Let $X$ be a  connected $d$-dimensional compact complex manifold, and $L_1,\ldots, L_s$ be line bundles on $X$ such that $V_i:=H^0(X,L_i)^*\neq 0$
and that the general section $\sigma_i\in V_i^*$ defines a nonsingular hypersurface $Y_{\sigma_i}=\{\sigma_i=0\}$ in $X$. Consider the family of all complete intersections $Y_\sigma:=Y_{\sigma_1}\cap\cdots\cap Y_{\sigma_s}$ which are smooth of codimension $s <d$ in $X$. Put $V:=V_1\times\cdots\times V_s$ and $B=V^*-D$ where $D$ consists of $\sigma\in V^*$ such that $Y_\sigma$ is not smooth of codimension $s$. Then $B$ parametrizes a smooth family $\cY$ of smooth complete intersections of codimension $s$ in $X$. Given $\sigma\in B$, the adjunction formula gives a canonical isomorphism
\[(L+K_X)|{Y_\sigma}\simeq K_{Y_\sigma}\]
where $L:=L_1+\cdots+L_s$. Thus when $L+K_X$ is trivial, $\cY$ is a family of CY manifolds, and when $L+K_X$ is ample, $\cY$ is a family of general type manifolds.

We shall assume that the Hodge number $\dim H^0(Y_\sigma,K_{Y_\sigma})$ is a locally constant function on $B$ (this is true when the fibers of $\cY$ are K\"ahler), and consider bundle $\H^\text{top}$ whose fiber at $\sigma\in B$ is $H^0(Y_\sigma,K_{Y_\sigma})$. For $\sigma\in B$, let
\[{\bR}_\sigma: H^0(X,L+K_X)\ra H^0(Y_\sigma,K_{Y_\sigma}) \]
be the restriction map. Then for each $\tau\in H^0(X,L+K_X)$, the map $$B\ra \H^\text{top},\quad \sigma\mapsto {\bR}_\sigma(\tau)$$
defines a global section of $\H^\text{top}$. So we get a linear map
\[{\bR}: H^0(X,L+K_X)\ra H^0(B, \H^\text{top}), \quad \tau\mapsto(\sigma\mapsto {\bR}_\sigma(\tau)).\]
In particular when $L+K_X{=\cO_X}$, then ${\bR}(1)$ gives a canonical global trivialization of $ \H^\text{top}$.

\begin{defn}[Period sheaves \cite{LY}]
We call the map {\bR} the global Poincar\'e residue map of the family $\cY$. For each $\tau\in H^0(X,L+K_X)$, we define the period sheaf ${\bf \Pi}(\tau)$ of the family $\cY$ to be the locally constant sheaf on $B$ generated by the local sections $$\int_\gamma  {\bR}(\tau),\; \gamma\in H_{d-s}(Y_\bullet,\Z)$$
where $Y_\bullet$ is some fixed fiber of $\cY$. A local section of this sheaf is called a period integral.
\end{defn}

Let $H-M\ra X$ be a bundle with CY structure $(\omega_M,\chi_M)$.
Fix a complex Lie subgroup $G\subset\Aut X$. 
If we assume further that $L_1,\ldots, L_s$  are  $G$-equivariant line bundles on $X$ such that $H^0(X, L+K_X)\neq 0$, $V_i^*:=H^0(X,L_i)$ is base point free, and there is an $H$-character $\chi$ such that $L\simeq L_\chi.$
(We note that if the character map $\lambda_M:\widehat{H}\ra\Pic_G(X)$ is onto, then the last assumption  is redundant.)
Then there is an explicit formula for the family version of the Poincar\'e residue map for complete intersections:
\begin{thm}[Global Poincar\'e residue \cite{LY}]
Let $L_1,\ldots, L_s$ be assumed as above. Fix independent vector fields $x_i$ generated by $H$ on $M$. There is a constant $c$ such that, for $\tau\in H^0(X,L+K_X),\; \sigma\in B,$ and $\psi:=\sigma_1\cdots\sigma_s\in H^0(X,L_\chi)$, we have
\[{\bR}_\sigma(\tau)=c\Res\frac{\tau_M}{\psi_M}\iota_{x_1}\cdots \iota_{x_q}\omega_M.\]
\end{thm}

This provides a powerful tool for studying Picard-Fuchs systems for period integrals and their solutions. This technique can reconstruct virtually all known Picard-Fuchs systems, plus a large class of new ones (tautological systems). See recent papers \cite{LSY},\cite{LY} and \cite{HLZ}.

CY bundles are also very useful for studying the $D$-modules associated with the Picard-Fuchs systems. A CY bundle allows us to describe the $D$-modules in terms of Lie algebra homology. In some important cases, this can be recast as the de Rham cohomology of an affine algebraic variety via the Riemann-Hilbert correspondence. See recent papers  \cite{BHLSY}, \cite{HLZ} and \cite{HLYZ}.

\subsection{Existence and uniqueness problems for CY bundles} 

For simplicity, we will assume that $G=1$. Every result we discuss has a $G$-equivariant version, where $G$ is a suitable lifting of any closed subgroup of $\Aut X$.

\bs
We begin with the special case $H\simeq(\C^\times)^p$ for a positive integer $p$, so that $\chi_\gh$ is trivial. 
The following result says that a CY bundle is uniquely determined by its character map.

\begin{thm} [Rigidity of CY bundles]
For $H=(\C^\times)^p$, let $M_i\equiv(H-M_i\ra X)$, $i=1,2$, be CY bundles such that the following diagram  
\[\xymatrix{
\widehat{H}\ar[r]^{\lambda_{M_1}\quad}\ar[d]^{\simeq}_{\xi}&\Pic(X)\\
\widehat{H}\ar[ur]_{\lambda_{M_2}}
}\]
commutes for some $\xi\in\Aut \widehat{H}$. Then there is an isomorphism $M_1\simeq M_2$, canonical up to a twist  by the induced automorphism $\xi^\vee\in\Aut H$.
\end{thm}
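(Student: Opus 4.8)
The plan is to reduce the theorem to the single assertion that, for a torus $H\simeq(\C^\times)^p$, the character map is a \emph{complete} isomorphism invariant of the bundle. Concretely, I would show that $M\mapsto\lambda_M$ induces a bijection
\[
\{\text{isomorphism classes of principal }H\text{-bundles over }X\}\;\xrightarrow{\ \sim\ }\;\Hom_\Z(\widehat H,\Pic X).
\]
Granting this, rigidity is almost formal: an automorphism $\xi\in\Aut\widehat H$ acts on the target by precomposition $f\mapsto f\circ\xi$, and I would match this action with a geometric twisting of bundles by the dual automorphism $\xi^\vee\in\Aut H$. The hypothesis is exactly the identity $\lambda_{M_1}=\lambda_{M_2}\circ\xi$, so $M_1$ and the $\xi^\vee$-twist of $M_2$ will have equal character maps, hence be isomorphic as $H$-bundles; unwinding the twist converts this into a $\xi^\vee$-equivariant isomorphism $M_1\simeq M_2$.

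To prove completeness I would invoke the classification of holomorphic principal bundles by (here abelian) \v{C}ech cohomology. The sheaf of holomorphic sections of $H$ is $\underline H=\cO_X^\times\otimes_\Z N$, where $N=\Hom(\C^\times,H)$ is the cocharacter lattice, free of rank $p$; therefore isomorphism classes are classified by
\[
H^1(X,\underline H)=H^1(X,\cO_X^\times)\otimes_\Z N=\Pic(X)\otimes_\Z N,
\]
the tensor factor passing through $H^1$ because $N$ is free of finite rank. Since $\widehat H=\Hom_\Z(N,\Z)=N^*$ under the perfect pairing between characters and cocharacters, freeness of $N$ gives a canonical isomorphism $\Pic(X)\otimes_\Z N\simeq\Hom_\Z(N^*,\Pic X)=\Hom_\Z(\widehat H,\Pic X)$. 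It then remains to identify this composite with $M\mapsto\lambda_M$. By naturality in $H$ and additivity it suffices to check the identification on a single factor $H=\C^\times$, where every bundle is $L^\times$ for a line bundle $L$; a direct computation from $L_\chi=(M\times\C_\chi)/H$ gives $\lambda_{L^\times}(\id)=L$, which pins it down. This yields both injectivity (completeness) and surjectivity of $M\mapsto\lambda_M$.

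For the twisting step, given $\psi\in\Aut H$ I define the twisted bundle $M^\psi$ to have the same total space as $M$ with $H$-action $m\ast h:=m\cdot\psi^{-1}(h)$. Tracing the definition $L_\chi=(M\times\C_\chi)/H$ through this reparametrization yields $\lambda_{M^\psi}=\lambda_M\circ\widehat\psi$, where $\widehat\psi(\chi)=\chi\circ\psi$ is the automorphism of $\widehat H$ induced by $\psi$. The dual $\xi^\vee\in\Aut H$ is, by convention, the automorphism satisfying $(\xi\chi)(h)=\chi(\xi^\vee h)$ for all $\chi,h$, i.e. $\widehat{\xi^\vee}=\xi$; hence $\lambda_{M_2^{\xi^\vee}}=\lambda_{M_2}\circ\xi=\lambda_{M_1}$. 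By the completeness bijection there is an isomorphism of principal $H$-bundles $M_1\simeq M_2^{\xi^\vee}$, which is precisely an isomorphism $M_1\to M_2$ intertwining the two $H$-actions through $\xi^\vee$ — the asserted twist. Since $X$ is compact and connected, $\cO(X)=\C$, so the only residual indeterminacy in such an isomorphism is the constant gauge group $\Gamma(X,\underline H)=H$.

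I expect the genuine work to be concentrated in the completeness lemma, and within it in two places: verifying that the abstract classifying class in $H^1(X,\underline H)$ corresponds, through the chain of natural identifications, to the character map $\lambda_M$; and keeping the duality bookkeeping between $\Aut\widehat H$ and $\Aut H$ (and the resulting direction of the twist) consistent. The cohomological inputs are standard for compact complex $X$, so the difficulty is one of naturality and conventions rather than of hard analysis.
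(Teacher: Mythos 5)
Your argument is correct, but it takes a genuinely different route from the paper's. The paper never invokes the cohomological classification of principal bundles. Instead it picks a $\Z$-basis $\{\chi_1,\dots,\chi_p\}$ of $\widehat H$, proves by an explicit map $m\mapsto([m,1]_{\chi_1},\dots,[m,1]_{\chi_p})$ that $M$ is canonically isomorphic to $(L^M_{\chi_1})^\times\oplus\cdots\oplus(L^M_{\chi_p})^\times$ (and likewise $N$ with the basis $\rho_i=\xi(\chi_i)$), and then assembles the given line bundle isomorphisms $\eta_i:L^M_{\chi_i}\to L^N_{\rho_i}$ into a map $\eta=\eta_1\oplus\cdots\oplus\eta_p$, checking by hand that $\eta$ is $H$-equivariant onto the $\xi^\vee$-twist of $N$. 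Your completeness lemma --- that $M\mapsto\lambda_M$ is a bijection onto $\Hom_{\Z}(\widehat H,\Pic(X))$ via $H^1(X,\cO_X^\times\otimes_\Z N)\simeq\Pic(X)\otimes_\Z N$ --- is valid for $H\simeq(\Cx)^p$ (freeness of the cocharacter lattice $N$ is what lets the tensor factor pass through $H^1$, and the normalization $\lambda_{L^\times}(\id)=L$ agrees with the paper's Proposition \ref{1st-prop}), and it subsumes both the rigidity theorem and the paper's subsequent existence-and-uniqueness statement for prescribed $\lambda$ in one stroke. What your approach buys is conceptual economy and the full classification for free; what the paper's buys is a concrete, basis-dependent isomorphism with no cohomological input, which is closer in spirit to the explicit constructions used elsewhere in the paper. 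Two small bookkeeping points: your twist convention $m\ast h=m\cdot\psi^{-1}(h)$ is inverse to the paper's $M^\sigma$ (action by $\sigma(h)^{-1}$), so your $\xi^\vee$ is the inverse of theirs --- harmless since the statement is only ``up to a twist'' --- and your closing remark that the residual indeterminacy is the constant gauge group $H^0(X,\underline{H})=H$ uses compactness and connectedness of $X$, which the paper's standing hypotheses supply.
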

{We can show further that the character map classifies holomorphic principal $(\C^\times)^p$-bundles.}

This result will be proved in Section \ref{sect-rigidity}.

\bs
We now consider the existence question. The first crucial step is the following criterion, which can also be seen as a consequence of Theorem \ref{classify CY structures}.

\begin{thm} [Obstruction criterion\cite{LY}]\label{obstruction thm}
An $H$-principal bundle $M$ admits a CY structure iff $K_X$ is in the image of the character map $\lambda_M$.
(By adjunction for bundles, we necessarily get $K_X=\lambda_M(\chi\chi_\gh)=L_{\chi\chi_\gh}$.)
\end{thm}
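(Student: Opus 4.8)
The quickest route is to read the statement off Theorem~\ref{classify CY structures}. That theorem sets up a bijection between the set of CY structures on $M$ and the fiber $\lambda_M^{-1}([K_X])\subseteq\widehat{H}$, under $(\C\omega,\chi)\mapsto\chi\chi_\gh$. Consequently $M$ admits a CY structure if and only if this fiber is nonempty, which is by definition the assertion $[K_X]\in\im\lambda_M$. The parenthetical addendum is then automatic: the structure corresponding to $\psi:=\chi\chi_\gh\in\lambda_M^{-1}([K_X])$ satisfies $\lambda_M(\chi\chi_\gh)=L_{\chi\chi_\gh}=[K_X]$, which is exactly Theorem~\ref{adjunction}.

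To see what actually drives this, I would also give a direct argument that does not invoke the full classification. The forward implication is immediate from Theorem~\ref{adjunction}: a CY structure $(\C\omega,\chi)$ yields $K_X\simeq L_{\chi\chi_\gh}=\lambda_M(\chi\chi_\gh)$, hence $[K_X]\in\im\lambda_M$. The substance is the converse, where one must build a nonvanishing $G$-invariant semi-invariant top form from the bare hypothesis that $[K_X]$ lies in the image. The plan is to work with the canonical bundle $K_M$ of the total space as a $G\times H$-equivariant line bundle and to compute its image under the equivalence $\Pic_{G\times H}(M)\simeq\Pic_G(X)$. (i) From the exact sequence $0\to T_{M/X}\to T_M\to\pi^*T_X\to 0$ and the trivialization of the vertical tangent bundle $T_{M/X}\simeq M\times\gh$ by fundamental vector fields, one checks that $H$ acts on $\det T_{M/X}$ through the character $\chi_\gh=\det\mathrm{Ad}$, so $K_M=\pi^*K_X\otimes\det T_{M/X}^*$ descends to $K_X\otimes L_{\chi_\gh}^{-1}$. (ii) A CY structure is exactly a $G$-invariant, $H$-semi-invariant trivialization of $K_M$, i.e.\ a $G\times H$-equivariant isomorphism $K_M\simeq M\times\C_\chi$ for some $\chi\in\widehat{H}$; under the equivalence this exists iff $K_X\otimes L_{\chi_\gh}^{-1}\simeq L_\chi$, i.e.\ iff $K_X\simeq L_{\chi\chi_\gh}$. (iii) Thus, given $[K_X]=\lambda_M(\psi)=[L_\psi]$, set $\chi:=\psi\chi_\gh^{-1}$; then $K_M\simeq M\times\C_\chi$ equivariantly, and transporting the constant section $1$ through this isomorphism produces the required $\omega$ with $\Gamma_h\omega=\chi(h)\omega$.

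The main obstacle is step (i): one has to track the $H$-action on the relative canonical bundle carefully and confirm that the resulting character is precisely $\chi_\gh$ (in particular trivial when $H\simeq(\C^\times)^p$, so that $K_M$ simply descends to $K_X$). A secondary but genuine subtlety sits in steps (ii)--(iii): an isomorphism of line bundles downstairs on $X$ must be promoted to an \emph{equivariant} trivialization of $K_M$ upstairs, not merely an isomorphism of underlying bundles. This promotion is exactly what the equivalence of categories supplies, and it is the point at which a naive ``the classes agree, so the form exists'' argument would be incomplete.
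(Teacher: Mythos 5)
Your first paragraph is exactly the paper's own argument: the paper derives this criterion as an immediate consequence of the bijection in Theorem~\ref{classify CY structures} (CY structures correspond to $\lambda_M^{-1}([K_X])$, which is nonempty iff $[K_X]\in\im\lambda_M$), and otherwise defers to \cite{LY}. Your supplementary direct argument via $K_M\simeq\pi^*K_X\otimes\det T_{M/X}^*$ and the equivariant trivialization is a sound reconstruction of the underlying mechanism from \cite{LY}, but it is not needed here and does not change the verdict: correct, same approach.
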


This gives a classification for rank 1 CY bundles.

\begin{cor}
The bundle $M\equiv(\C^\times-M\ra X)$ admits a CY structure iff $M$ is the complement of the zero section of a line bundle $L$ which is a root of $K_X$, i.e. $K_X\simeq kL$ for some integer $k$.
\end{cor}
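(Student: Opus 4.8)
The plan is to derive this directly from the Obstruction criterion (Theorem~\ref{obstruction thm}), after unwinding what the character map looks like in the case $H=\C^\times$. First I would identify the character lattice: since $\widehat{\C^\times}=\Hom(\C^\times,\C^\times)\cong\Z$, every holomorphic character is of the form $\chi_k:t\mapsto t^k$ for a unique $k\in\Z$. Because $\C^\times$ is abelian and one-dimensional, its adjoint representation on $\gh$ is trivial, so $\chi_\gh$ is the trivial character; in particular the image condition to be checked is simply $[K_X]\in\im\lambda_M$, with the adjunction of Theorem~\ref{adjunction} collapsing to $K_X\simeq L_\chi$.

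Next I would invoke the standard dictionary between principal $\C^\times$-bundles and line bundles: any principal $\C^\times$-bundle $M\to X$ is isomorphic to $L^\times$, the complement of the zero section of the line bundle $L:=L_{\chi_1}=(M\times\C_{\chi_1})/\C^\times$ associated to the tautological character $\chi_1=\id$. This makes the ``complement of the zero section'' clause of the statement automatic, so that the real content of the corollary lies entirely in the root condition on $L$.

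The key computation is to describe $\im\lambda_M$. I would verify that passing to the power $\chi_k=\chi_1^{\,k}$ commutes with the associated-bundle construction, so that taking powers of the character corresponds to taking integer tensor powers of $L$; that is, $\lambda_M(\chi_k)=k[L]$ (up to the overall sign fixed by the convention in the definition of $L_\chi$). In either sign convention this shows $\im\lambda_M=\Z\cdot[L]$, the cyclic subgroup of $\Pic(X)$ generated by $[L]$, and that under the identification $\widehat{H}\cong\Z$ the character map is just $k\mapsto k[L]$.

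Finally, I would apply the Obstruction criterion: $M$ admits a CY structure if and only if $[K_X]\in\im\lambda_M=\Z\cdot[L]$, which is precisely the assertion that $K_X\simeq kL$ for some integer $k$, i.e. that $L$ is a root of $K_X$. I expect the only step that is not a direct citation to be the verification that $L_{\chi_k}\simeq L^{\otimes k}$ — that the associated-bundle construction carries addition of characters to tensor product of line bundles — but this is routine, and everything else is an immediate appeal to the stated theorems.
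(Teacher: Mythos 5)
Your proposal is correct and follows essentially the same route the paper intends: the corollary is stated as an immediate consequence of the obstruction criterion (Theorem~\ref{obstruction thm}), with the character map for a rank-one bundle $L^\times$ computed exactly as you do (the $p=1$ case of Proposition~\ref{1st-prop}/Theorem~\ref{discrete} gives $\lambda_M(\chi_k)=kL$). Your added care about the standard equivalence between principal $\C^\times$-bundles and complements of zero sections, and about the sign convention in $L_\chi$, fills in details the paper leaves implicit but does not change the argument.
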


\begin{exm}
Take $X=\P^d$, $H=\C^\times$. Then rank 1 CY bundles on $X$ are exactly those of the form $M\simeq\cO(k)^\times$ with the prescribed $\Cx$-action $h\cdot l:=lh^{-1}, h\in\Cx,l\in M$ for some $k|(d+1)$. Its character map is then
$$
\lambda_{M}:\widehat{H}=\Z\ra\Pic(X)\simeq\Z,~~~1\mapsto k.
$$
Thus $\lambda_{M}$ is isomorphic iff $k=\pm1$. 
Let $M_1:=\cO(-1)^\times, M_2:=\cO(1)^\times$, then $\xi: \widehat{H}\ra \widehat{H}, 1\mapsto -1$ is an isomorphism and $\lambda_{M_1}=\lambda_{M_2}\circ\xi.$
So by the rigidity theorem,  $\cO(-1)^\times\simeq\cO(1)^\times$ up to a twist by $\xi^\vee: H\ra H,~~h\mapsto h^{-1}$.


We can describe the isomorphism explicitly as follows. Let $N:=\C^{d+1}\setminus\{0\}$ with a $\Cx$-action $h\cdot m=mh^{-1}$. Since $\cO(-1)\subset \P^d\times \C^{d+1}$, we have an isomorphism of fiber bundles $\alpha: N\ra\cO(-1)^\times, m\mapsto([m],m)$. 
Since $h\cdot([m],m)=([m],mh^{-1})=([mh^{-1}],mh^{-1})=\alpha(mh^{-1})=\alpha(h\cdot m)$, $\alpha$ is an isomorphism of principal $\Cx$-bundles between $M_1$ and $N$.

We can define a linear function $m^{-1}:\cO(-1)_{[m]}\ra\C, ([m],cm)\mapsto c$. 
Then $m^{-1}\in (\cO(-1)_{[m]})^\vee\simeq \cO(1)_{[m]}$. Let $\beta:N\ra\cO(1)^\times, m\mapsto([m],m^{-1})$. Then $\beta$ is an isomorphism of fiber bundles as well. However, $h\cdot ([m],m^{-1})=([m],m^{-1}h^{-1})=([mh],(mh)^{-1})=\beta(mh)=\beta(h^{-1}\cdot m)$. This implies that as principal bundles, $M_2$ is isomorphic to the bundle $N$ but with $\Cx$-action twisted by $\xi^\vee$. Therefore $M_1$ and $M_2$ are isomorphic as principal bundles up to a twist by $\xi^\vee.$

\end{exm}

This example generalizes to any smooth toric variety. As an application, we can give a simple characterization of an important object in toric geometry constructed by Audin and Cox in the early 90's. Namely, let $T=(\C^\times)^d$ and $X$ be a smooth complete toric variety with respect to the group $T$.

\begin{thm} [Audin-Cox variety]
Let $t$ be the number of $T$-divisors in $X$. Then
there is a canonical  $(\C^\times)^t$-invariant Zariski open subset $M\subset\C^t$ and a $(t-d)$-dimensional closed algebraic subgroup $H\subset(\C^\times)^t$ such that the geometric quotient $M/H$ is isomorphic to $X$.
\end{thm}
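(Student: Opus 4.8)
The plan is to reconstruct $X$ from its defining fan by a Cox-type quotient construction and then to observe that the resulting presentation lands exactly in the principal-bundle framework developed above. Write $N\simeq\Z^d$ for the cocharacter lattice of $T$, so that $T=N\otimes_{\Z}\Cx$ and $N^\vee=\Hom(N,\Z)$ is the character lattice, and let $\Sigma$ be the fan of $X$ in $N\otimes_{\Z}\R$. Recall that the rays $\rho_1,\dots,\rho_t$ of $\Sigma$ correspond bijectively to the codimension-one $T$-orbits, equivalently to the $T$-invariant prime divisors of $X$, which is the integer $t$ in the statement; let $u_1,\dots,u_t\in N$ be their primitive generators. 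Because $X$ is smooth every cone of $\Sigma$ is unimodular, i.e. its primitive generators form part of a $\Z$-basis of $N$, and because $X$ is complete the maximal cones are $d$-dimensional and cover $N\otimes_{\Z}\R$; so the $u_i$ generate $N$ and the map $\Z^t\onto N$, $e_i\mapsto u_i$, is surjective. First I would record this together with its dual, the divisor exact sequence.

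Next I would produce the group $H$. Dualizing the surjection $\Z^t\onto N$ gives the divisor exact sequence
\[
0\ra N^\vee\ra\Z^t\ra\Pic(X)\ra0,
\]
where the first map sends $m$ to $(\langle m,u_i\rangle)_i$ and is injective since the $u_i$ span; its cokernel is the class group, which equals $\Pic(X)$ because $X$ is smooth and is free of rank $t-d$ (as is always the case for a smooth complete toric variety). Applying the exact functor $\Hom(-,\Cx)$, exact because $\Cx$ is divisible hence injective, yields
\[
1\ra H\ra(\Cx)^t\ra T\ra1,\qquad H:=\Hom(\Pic(X),\Cx)\simeq(\Cx)^{t-d},
\]
which realizes $H$ as a closed algebraic subtorus of $(\Cx)^t$ of dimension $t-d$, acting on $\C^t$ by the restriction of the coordinatewise scaling action of $(\Cx)^t$.

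It remains to cut out $M$ and identify the quotient. I would set $M:=\C^t\setminus Z$, where $Z$ is the vanishing locus of the irrelevant ideal $\langle\,\prod_{u_i\notin\sigma}x_i:\sigma\in\Sigma\,\rangle$; since $Z$ is defined by monomials it is stable under $(\Cx)^t$, so $M$ is $(\Cx)^t$-invariant and in particular $H$-invariant. For each maximal cone $\sigma$ one has the affine piece $M_\sigma=\{x\in M:x_i\neq0\text{ for }u_i\notin\sigma\}$, and $M=\bigcup_\sigma M_\sigma$. The heart of the argument is to show that the quotient $M_\sigma/H$ is canonically isomorphic to the affine toric chart $U_\sigma=\Spec\C[\sigma^\vee\cap N^\vee]$: concretely, one matches the $H$-invariant functions on $M_\sigma$ with the monomials $\prod_i x_i^{\langle m,u_i\rangle}$ indexed by $m\in\sigma^\vee\cap N^\vee$, using the exact sequence above to see that these are exactly the $H$-invariants and that unimodularity of $\sigma$ makes the correspondence a ring isomorphism. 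Checking that these chart-isomorphisms agree on the overlaps $M_\sigma\cap M_\tau$ then glues them into an isomorphism $M/H\simeq X$.

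The main obstacle is this last step: one must verify not merely a bijection on points but an isomorphism of complex spaces, i.e. that $M\ra X$ is a genuine geometric quotient carrying the correct structure sheaf. Here smoothness is used twice and decisively: it guarantees that $\Sigma$ is simplicial, so that the categorical quotient is in fact geometric with orbits equal to fibers, and that each maximal cone is unimodular, so that $H$ acts \emph{freely}, the stabilizers being trivial rather than merely finite. Freeness upgrades $M\ra X$ to a principal $H$-bundle, placing the construction squarely in the setting of the preceding sections; and under the identification $\widehat H=\Hom(\Hom(\Pic(X),\Cx),\Cx)\simeq\Pic(X)$, valid since $\Pic(X)$ is free, the character map $\lambda_M$ becomes the identity, so that this $M$ simultaneously realizes \emph{all} line bundles of $X$ by $H$-characters, exactly as in the $\P^d$ example. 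Canonicity of $M$ and $H$ is then automatic, since every choice made depends only on the fan $\Sigma$, hence only on $X$ together with its $T$-action.
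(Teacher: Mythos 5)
The paper itself does not prove this theorem: it is quoted as the Audin--Cox construction from the early 1990's, and the surrounding claims (that $\lambda_M$ is an isomorphism, and the uniqueness corollary) are the paper's actual contribution. So there is no in-paper argument to compare against; what you have written is the standard Cox quotient construction, and it is essentially correct. Your exact sequence $0\ra N^\vee\ra\Z^t\ra\Pic(X)\ra 0$, the definition of $H$ by applying the exact functor $\Hom(-,\Cx)$, the irrelevant-ideal description of $M$, the chart-by-chart identification $M_\sigma/H\simeq U_\sigma$ via the invariant monomials $\prod_i x_i^{\langle m,u_i\rangle}$, and the use of unimodularity to get freeness of the $H$-action (hence a principal bundle and a genuine geometric quotient) are all the right ingredients, correctly deployed. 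Two small remarks. First, you have silently corrected the statement: you take $t$ to be the number of rays of the fan, i.e.\ the number of codimension-one $T$-orbits, whereas the theorem as printed says ``the number of $T$-orbits,'' which would count all cones of $\Sigma$ (already wrong for $\P^1$, where there are three orbits but $M=\C^2\setminus\{0\}$); your reading is the intended one, but it is worth saying explicitly that the statement needs this emendation. Second, your closing claim that $\lambda_M$ becomes the identity under $\widehat H\simeq\Pic(X)$ is not part of the theorem being proved -- in the paper it is a separate assertion used to feed the rigidity theorem -- and it deserves its own verification (that $M\times_H\C_\chi$ is the line bundle of the divisor class corresponding to $\chi$, via the grading of the Cox ring); asserting it in one clause undersells the work, though the fact itself is standard.
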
 

It can be shown that the character map $\lambda_M$ for the Audin-Cox variety is  an isomorphism. In particular $M$ is a CY bundle over $X$. The $\P^d$ example above is a special case of this construction. Now as a consequence of our rigidity theorem, we have the following characterization of $M$:

\begin{cor}
The Audin-Cox variety $M$ is the unique (up to a twist) CY bundle over $X$ with the property that $\lambda_M$ is a group isomorphism.
\end{cor}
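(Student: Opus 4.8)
The plan is to derive this uniqueness statement directly from the Rigidity Theorem, using the fact (stated just above the corollary) that the Audin--Cox variety $M$ is itself a CY bundle whose character map $\lambda_M\colon\widehat H\ra\Pic(X)$ is a group isomorphism. So suppose $M'\equiv(H'-M'\ra X)$ is any CY bundle over $X$ with $H'\simeq(\Cx)^{p'}$ whose character map $\lambda_{M'}\colon\widehat{H'}\ra\Pic(X)$ is also a group isomorphism. The goal is to produce a bundle isomorphism $M\simeq M'$, and this is exactly what the Rigidity Theorem provides once its hypotheses have been set up.

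The first step is to identify the two structure groups. Composing the two character maps yields a lattice isomorphism $\phi:=\lambda_{M'}^{-1}\circ\lambda_M\colon\widehat H\xrightarrow{\ \sim\ }\widehat{H'}$. Since $H$ and $H'$ are algebraic tori, the contravariant equivalence $T\mapsto\widehat T$ between tori and their character lattices converts $\phi$ into an isomorphism of algebraic groups $\phi^\vee\colon H'\xrightarrow{\ \sim\ }H$; in particular $p=p'$, and I would use $\phi^\vee$ to transport the principal $H'$-action on $M'$ to a principal $H$-action, so that $M'$ becomes a CY bundle with the same structure group $H$ as $M$.

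The second step is to match the character maps. After recomputing the character map of $M'$ for the transported $H$-action, it agrees with $\lambda_{M'}$ up to pullback along $\phi$, and hence differs from $\lambda_M$ by an automorphism $\xi\in\Aut\widehat H$ (indeed, for the natural choice of identification $\xi$ can be taken to be the identity). In either case one obtains the relation
\[
\lambda_M=\lambda_{M'}\circ\xi,
\]
which is precisely the commuting triangle required in the Rigidity Theorem with $M_1:=M$ and $M_2:=M'$. Invoking that theorem then gives an isomorphism $M\simeq M'$ of principal $H$-bundles, canonical up to a twist by $\xi^\vee$, establishing the asserted uniqueness.

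The main obstacle — really the only non-formal point — is the second step: keeping careful track of how the character map transforms when the structure group is changed by an isomorphism. Because $\widehat{(\cdot)}$ reverses arrows, this is a contravariant bookkeeping, and one must verify that transporting the $H'$-action along $\phi^\vee$ turns $\lambda_{M'}$ into $\lambda_{M'}\circ\widehat{\phi^\vee}$ (up to the inverses built into the conventions $gmh^{-1}$) rather than some unintended twist. I would also note that the argument relies on $\Pic(X)$ being free for a smooth complete toric variety, so that $\widehat H\simeq\Pic(X)\simeq\widehat{H'}$ already forces the two tori to have equal rank and to be abstractly isomorphic, with no torsion phenomena to obstruct the identification.
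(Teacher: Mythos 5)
Your argument is correct and follows essentially the same route as the paper, which states this corollary precisely as a consequence of the rigidity theorem together with the fact that the Audin--Cox character map is an isomorphism. Your additional care in first identifying the two structure groups via $\lambda_{M'}^{-1}\circ\lambda_M$ (using freeness of $\Pic(X)$ for a smooth complete toric variety) is a reasonable filling-in of detail that the paper leaves implicit.
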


Next, we will see that this allows us to reconstruct the same space $M$ in many different ways as an algebraic variety. The characterization also shows that a CY bundle over a general complex manifold can be viewed as a  generalization of the Audin-Cox construction for toric varieties. 

Here is one of our main results.

\begin{thm} {} 
Let $X$ be a compact complex manifold. If $\Pic(X)$ is free, then $X$ admits a unique (up to a twist) CY bundle whose character map is an isomorphism. If $\Pic(X)$ is finitely generated, then $X$ admits a CY bundle whose character map is an isomorphism.
If $X$ is K\"ahler, then it admits a CY bundle whose character map is onto.
\end{thm}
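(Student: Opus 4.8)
The plan is to prove both statements by an explicit construction, reducing the existence of a CY structure to the Obstruction criterion (Theorem \ref{obstruction thm}) and the uniqueness to the Rigidity theorem. First I would dispose of the free case by noting that the hypothesis forces $\Pic(X)$ to have finite rank. From the exponential sequence $0\to\Z\to\cO_X\to\cO_X^\times\to 0$ the image of $H^1(X,\cO_X)\to\Pic(X)$ is $\Pic^0(X)$, a divisible group, while $NS(X)=\Pic(X)/\Pic^0(X)$ embeds in the finitely generated group $H^2(X,\Z)$. Since a free abelian group contains no nonzero divisible subgroup, freeness of $\Pic(X)$ gives $\Pic^0(X)=0$ and $\Pic(X)\simeq\Z^p$ of finite rank $p$. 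Choosing line bundles $L_1,\dots,L_p$ forming a basis, set $H=(\C^\times)^p$ and let $M=L_1^\times\times_X\cdots\times_X L_p^\times$ be the fiberwise product of the complements of the zero sections, a principal $H$-bundle. The associated line bundle of the $i$-th standard character is $L_i$, so $\lambda_M\colon\widehat H=\Z^p\to\Pic(X)$ carries the standard basis to $L_1,\dots,L_p$ and is an isomorphism; in particular $K_X\in\im\lambda_M$, and the Obstruction criterion furnishes a CY structure on $M$. For uniqueness, any CY bundle $M'$ with $\lambda_{M'}$ an isomorphism has $\widehat{H'}\simeq\Z^p$, hence $H'=(\C^\times)^p$, and $\xi:=\lambda_{M'}^{-1}\circ\lambda_M\in\Aut\widehat H$ satisfies $\lambda_M=\lambda_{M'}\circ\xi$, so the Rigidity theorem gives $M\simeq M'$.

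In the K\"ahler case the difficulty is the positive-dimensional, divisible part $\Pic^0(X)$, which can never lie in the image of the characters $\widehat{(\C^\times)^p}=\Z^p$ of any torus. Here I would invoke Hodge theory: on a compact K\"ahler manifold $\Pic^0(X)=H^1(X,\cO_X)/H^1(X,\Z)$ is the quotient of the finite-dimensional vector space $V:=H^1(X,\cO_X)$ by the lattice $H^1(X,\Z)$, while $NS(X)$ is a finitely generated subgroup of $H^2(X,\Z)$. I treat the two pieces by two factors of $H$. For $NS(X)$, pick line bundles $N_1,\dots,N_r$ whose classes generate $NS(X)$ and form $M_1=N_1^\times\times_X\cdots\times_X N_r^\times$ as above. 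For $\Pic^0(X)$, use the additive vector group $V$, whose holomorphic characters are $\widehat V\simeq V^\ast$ via $\ell\mapsto e^{\ell}$, together with a principal $V$-bundle $M_0$ whose classifying class $c\in H^1(X,\cO_X\otimes V)\simeq V\otimes V$ is chosen so that the contraction $V^\ast\to V$, $\ell\mapsto(\mathrm{id}\otimes\ell)(c)$, is an isomorphism. Then $\lambda_{M_0}$ factors as $V^\ast\xrightarrow{\sim}V=H^1(X,\cO_X)\twoheadrightarrow\Pic^0(X)$ and is onto $\Pic^0(X)$.

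Finally I set $H=V\times(\C^\times)^r$ and $M=M_0\times_X M_1$. Since the line bundle attached to a product character is the tensor product of the individual associated bundles, $\im\lambda_M$ is the subgroup of $\Pic(X)$ generated by $\Pic^0(X)=\im\lambda_{M_0}$ together with $N_1,\dots,N_r$; as the $N_i$ generate $\Pic(X)$ modulo $\Pic^0(X)$, this is all of $\Pic(X)$, so $\lambda_M$ is onto. Hence $K_X\in\im\lambda_M$, and the Obstruction criterion again provides a CY structure on $M$. The step I expect to be the main obstacle is the analysis of $\lambda_{M_0}$ for the vector-group bundle: one must verify that a holomorphic character $\ell\in V^\ast$ pulls the classifying class back along $\mathrm{id}\otimes\ell$ and that the resulting map $V^\ast\to H^1(X,\cO_X)\to\Pic^0(X)$ is surjective. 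This is where the argument genuinely uses the K\"ahler hypothesis, since it rests on the Hodge-theoretic description of $\Pic^0(X)$ as $H^1(X,\cO_X)/H^1(X,\Z)$ and on the surjectivity of the exponential map $H^1(X,\cO_X)\to\Pic^0(X)$.
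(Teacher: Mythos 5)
Your proof is correct, and the first half (the free case) is essentially the paper's argument --- Whitney sum of the $L_i^\times$ for a basis of $\Pic(X)$, Theorem \ref{discrete} to compute the character map, the obstruction criterion for the CY structure, and the rigidity theorem for uniqueness. In fact your justification that $\Pic(X)$ is finitely generated (divisibility of $\Pic_0(X)$ forces $\Pic_0(X)=0$ when $\Pic(X)$ is free, so $\Pic(X)$ embeds in $H^2(X,\Z)$ via $c_1$) is more careful than the paper's bare assertion that compactness alone gives finite generation. The K\"ahler half, however, is a genuinely different construction. The paper realizes $\Pic_0(X)$ through the universal cover: it takes $\tilde{M}=\tilde{X}\oplus L_1^\times\oplus\cdots\oplus L_p^\times$ with structure group $\pi_1(X)\times(\Cx)^p$, and the surjectivity onto $\Pic_0(X)$ (Corollary \ref{same-map2}) and onto the torsion of $c_1(\Pic(X))$ (Proposition \ref{torsion}) both rest on representing degree-zero line bundles by \emph{constant} transition functions, i.e.\ by characters of $\pi_1(X)$; this is exactly where the Hodge decomposition (surjectivity of $H^1(X,\C)\to H^1(X,\cO)$) enters. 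You instead bypass flat bundles entirely: you take the additive group $V=H^1(X,\cO)$, whose characters are $\ell\mapsto e^{\ell}$ for $\ell\in V^*$, and a principal $V$-bundle whose class in $H^1(X,\cO\otimes V)\simeq V\otimes V$ is a nondegenerate tensor, so that $\lambda_{M_0}$ factors as $V^*\xrightarrow{\sim}V\to\Pic_0(X)$ with the second map the (surjective) exponential. This buys several things: the structure group $\C^q\times(\Cx)^r$ is connected and abelian (the paper's $\pi_1(X)\times(\Cx)^p$ is in general neither), your $q=h^{0,1}$ is half the $b_1$ appearing when the paper reworks its construction with abelian groups in Section \ref{sect-abelian}, and --- contrary to your closing remark --- your argument does not actually use the K\"ahler hypothesis: finite-dimensionality of $H^1(X,\cO)$, surjectivity of $H^1(X,\cO)\to\ker c_1$, and finite generation of $\Pic(X)/\ker c_1\subset H^2(X,\Z)$ all hold on any compact complex manifold, so you in fact prove the surjectivity statement in greater generality than the paper does. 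The price is that you lose the explicit description of degree-zero bundles by multipliers/factors of automorphy, which the paper exploits elsewhere. One small quibble: in the uniqueness step, $\widehat{H'}\simeq\Z^p$ does not by itself force $H'\simeq(\Cx)^p$ (e.g.\ a factor with no nontrivial characters, such as a Cousin group, would be invisible to $\widehat{H'}$); uniqueness should be read, as in the paper's Theorem \ref{unique}, within the class of $(\Cx)^p$-bundles, where your application of rigidity is exactly the paper's.
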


These results will be proved in Sections \ref{sect-rigidity}, \ref{sect-Pic} and \ref{sect-discrete}.

When $\Pic(X)$ is free, the proof uses the obstruction criterion of \cite{LY}, and the rigidity theorem above. When $\Pic(X)$ is not free, then by using the  K\"ahler condition, we can lift the construction to the universal cover $\tilde X$, where $\Pic(\tilde X)$ is finitely generated. Then  we apply a construction similar to the first case. 

Moreover, making use of the Remmert-Morimoto decomposition for connected abelian complex Lie groups, we have:
\begin{thm}
If $X$ is compact K\"ahler and $H$ is a sufficiently large connected abelian group, then there exists a CY $H$-bundle whose character map is onto.
\end{thm}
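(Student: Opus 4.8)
The plan is to reduce the statement to a single surjectivity assertion and then let the obstruction criterion do the rest. Indeed, once we produce a connected abelian $H$-bundle $M$ over $X$ whose character map $\lambda_M$ is \emph{onto}, then in particular $[K_X]\in\Pic(X)$ lies in the image of $\lambda_M$, so by Theorem \ref{obstruction thm} the bundle $M$ automatically carries a CY structure. Thus the CY-ness comes for free, and the entire content of the proof is the construction of an $H$-bundle with surjective character map for $H$ ``sufficiently large.''

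First I would record the structure of $\Pic(X)$ from the exponential sequence $0\to\Z\to\cO_X\to\cO_X^\times\to0$, which gives the four-term sequence
\[
H^1(X,\Z)\to H^1(X,\cO_X)\xrightarrow{\ \exp_*\ }\Pic(X)\xrightarrow{\ c_1\ }H^2(X,\Z).
\]
Here $\Pic^0(X):=\ker c_1=\exp_*\big(H^1(X,\cO_X)\big)$ and $NS(X):=c_1(\Pic(X))\subseteq H^2(X,\Z)$ is finitely generated. This is where the K\"ahler hypothesis enters: Hodge theory guarantees that $H^1(X,\Z)$ sits as a full lattice inside $H^1(X,\cO_X)\cong\C^q$ with $q=h^{1,0}(X)$, so $\Pic^0(X)$ is a complex torus of dimension $q$, and $q$ is precisely the invariant controlling how large $H$ must be. Next I would analyze $\widehat H$ via the Remmert--Morimoto decomposition $H\cong\C^a\times(\C^\times)^b\times T_0$ with $T_0$ toroidal. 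Since $T_0$ admits no nonconstant holomorphic functions, $\widehat{T_0}=\{1\}$; and one checks directly that $\widehat{\C^a}\cong(\C^a)^*$ (every character is $z\mapsto e^{\ell(z)}$, hence factors through $\exp\colon\C\to\C^\times$) while $\widehat{(\C^\times)^b}\cong\Z^b$. Thus $\widehat H\cong\C^a\oplus\Z^b$, and I would make ``sufficiently large'' precise as $a\ge q$ and $b\ge\mu\big(NS(X)\big)$, where $\mu$ denotes the minimal number of generators.

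With this in place, construct $M=M_1\times_X M_2\times_X(X\times T_0)$ as follows. Let $M_2=L_1^\times\times_X\cdots\times_X L_b^\times$, where the $L_j$ are line bundles whose classes $c_1(L_j)$ generate $NS(X)$ (these exist because $c_1$ is onto $NS(X)$); its character map has image the subgroup $\langle L_1,\dots,L_b\rangle$. Let $M_1$ be the principal $\C^a$-bundle whose class $(v_1,\dots,v_a)\in H^1(X,\cO_X)^{\oplus a}=H^1(X,\underline{\C^a})$ is chosen with the $v_i$ spanning $H^1(X,\cO_X)$, which is possible since $a\ge q$. The key computation is that, because each character of the $\C^a$-factor factors through $\exp$, functoriality of the exponential connecting map gives $\lambda_{M_1}(\chi_\ell)=\exp_*\big(\sum_i\ell_i v_i\big)$, so the image of $\lambda_{M_1}$ is $\exp_*\big(H^1(X,\cO_X)\big)=\Pic^0(X)$. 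Adding the two contributions, the image of $\lambda_M$ is $\Pic^0(X)+\langle L_1,\dots,L_b\rangle$; a one-line diagram chase in the four-term sequence (any $L$ satisfies $c_1(L)=\sum n_j c_1(L_j)$, so $L-\sum n_j L_j\in\ker c_1=\Pic^0(X)$) shows this equals all of $\Pic(X)$. Hence $\lambda_M$ is onto, and the obstruction criterion completes the argument.

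The step I expect to be the main obstacle is exactly the realization of the continuous part $\Pic^0(X)$. A torus $(\C^\times)^p$ has only the discrete character group $\Z^p$, whose image in $\Pic(X)$ is countable and can never exhaust a positive-dimensional $\Pic^0(X)$; it is precisely to capture $\Pic^0(X)$ that one must leave the world of algebraic tori and use the non-compact $\C^a$-factors, whose characters run through $\exp_*$. Making the identification $\lambda_{M_1}=\exp_*\circ(\sum_i\ell_i v_i)$ fully rigorous---tracking the naturality of the exponential connecting homomorphism at the level of $H$-valued cocycles, and verifying that the toroidal factor $T_0$ contributes nothing to either $\widehat H$ or the bundle---is the technical heart of the proof, whereas the discrete part $NS(X)$ and the final CY conclusion are comparatively routine.
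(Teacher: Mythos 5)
Your proposal is correct, and its overall skeleton --- Remmert--Morimoto decomposition, splitting $\Pic(X)$ into $\Pic_0(X)$ plus a finitely generated Chern-class part handled by $L_1^\times\oplus\cdots\oplus L_b^\times$, noting that the Cousin/toroidal factor has trivial character group, and invoking the obstruction criterion at the end --- is exactly the paper's. Where you genuinely diverge is the crux, namely realizing $\Pic_0(X)$ by characters of a connected group. The paper builds its $\C^q$-bundle from the universal cover with $q=\mathrm{rank}\,\overline{\pi_1(X)}=b_1(X)$: it first shows (using the K\"ahler hypothesis, via surjectivity of $H^1(X,\C)\to H^1(X,\cO)$ in the Hodge decomposition) that every $L\in\Pic_0(X)$ has constant multipliers and hence lies in the image of $\lambda_{\tilde X}:\widehat{\pi_1(X)}\to\Pic(X)$, then passes through the abelianization, its free quotient $\Z^q$, and the inclusion $\Z^q\hookrightarrow\C^q$, using divisibility of $\C^\times$ to extend characters. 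You instead take the holomorphic principal $\C^a$-bundle classified by a spanning tuple in $H^1(X,\cO)^{\oplus a}$ and observe that every character of $\C^a$ factors through $\exp$, so the character map is $\exp_*$ precomposed with a surjection onto $H^1(X,\cO)$, whose image is $\ker c_1=\Pic_0(X)$ directly from the exponential sequence. Your route is shorter, needs only $a\geq h^{0,1}(X)$ copies of $\C$ rather than $b_1(X)=2h^{0,1}(X)$, and in fact does not use the K\"ahler hypothesis at this step at all; what the paper's longer route buys is the explicit description of $\Pic_0(X)$ by constant factors of automorphy on the universal cover and the attendant computation of $\ker\lambda_{\tilde X}\simeq H^0(X,\Omega^1_c)$, which it wants for its own sake in Section 3. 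The two points you flag as technical (naturality of $\exp_*$ on cocycles, and triviality of the toroidal contribution) are indeed the only things to check, and both go through.
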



This result will be proved in Section \ref{sect-abelian}.

\bs

\section{Existence of $(\Cx)^p$-principal bundles}\label{sect-Cx}

Recall that a holomorphic principal $H$-bundle, where $H$ denotes a complex Lie group, is a holomorphic bundle $\pi: M\ra X$ equipped with a holomorphic right action $M\times H\ra M$, such that $H$ acts on each fiber of $\pi$ freely and transitively. We denote the principal bundle as $H-M\ra X$. If we have two principal bundles $H_1-M_1\ra X$ and $H_2-M_2\ra X$, then the Whitney sum $M_1\oplus M_2$ is a principal $(H_1\times H_2)$-bundle over $X$. 

Let $X$ be a complex manifold, and $L$ be a holomorphic line bundle over $X$. Let $L^{\times}$ denote the complement of the zero section of $L$. Then we have a natural $\Cx$-action on $L^{\times}$:
\[\Cx\times L^{\times}\ra L^{\times}, (h, l)\mapsto lh^{-1}\]
where $h\in\Cx, l\in L^{\times}$.

It is clear that this action preserves each fiber of $L^\times\ra X$ and it is free and transitive, which means that the projection $L^{\times}\ra X$ defines a principal $\Cx$-bundle over $X$.

Now if we have two holomorphic line bundles $L_1, L_2$ on $X$, then $L_1^{\times}\oplus L_2^{\times}\subset L_1\oplus L_2$ is a principal $(\Cx)^2$-bundle over $X$ with the $(\Cx)^2$-action given by
\[(h_1,h_2)\cdot(l_1,l_2)=(l_1h_1^{-1},l_2h_2^{-1}).\]
We can represent any given $(\Cx)^2$-character $\chi\in \widehat{(\Cx)^2}=\Z^2$ uniquely as a product
\[\chi_k\chi_l: (\Cx)^2\ra\Cx, (h_1,h_2)\mapsto h_1^kh_2^l\]
for some $k,l\in\Z$, where $\chi_k\in\widehat{\C^\times}$.

\bigskip

\begin{prop}\label{1st-prop}
Given holomorphic line bundles $L_1,L_2$ on $X$, we have an isomorphism of holomorphic line bundles:
\[(L_1^\times \oplus L_2^\times) \times_{(\mathbb{C}^\times)^2}\mathbb{C}_{\chi_k\chi_l}\simeq kL_1+lL_2 .\]
\end{prop}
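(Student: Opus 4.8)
The plan is to write down an explicit canonical isomorphism, which is cleaner here than manipulating transition functions because the associated-bundle quotient provides a global map directly. Set $P := L_1^\times\oplus L_2^\times$, a principal $(\Cx)^2$-bundle, with the action $(h_1,h_2)\cdot(l_1,l_2)=(l_1 h_1^{-1},l_2 h_2^{-1})$ recalled above. By definition the associated line bundle $P\times_{(\Cx)^2}\C_{\chi_k\chi_l}$ is the quotient of $P\times\C$ by the relation
\[
\big((l_1,l_2),\,v\big)\;\sim\;\big((l_1 h_1^{-1},l_2 h_2^{-1}),\, h_1^k h_2^l\, v\big),
\]
since $\chi_k\chi_l(h_1,h_2)=h_1^k h_2^l$; here the identification $[m,v]=[m h^{-1},\chi(h)v]$ is the one compatible with the canonical isomorphism $\Gamma(X,L_\chi)\simeq\cO_\chi(M)$ stated in the introduction.

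First I would define the candidate map fiberwise. For $x\in X$ and $(l_1,l_2)\in (L_1)_x^\times\times(L_2)_x^\times$, each $l_i$ is nonzero, so the tensor monomial $l_1^{\otimes k}\otimes l_2^{\otimes l}$ is a well-defined element of the fiber $(kL_1+lL_2)_x=(L_1)_x^{\otimes k}\otimes(L_2)_x^{\otimes l}$ for every $k,l\in\Z$, where a negative exponent is read as the corresponding power of the dual vector $l_i^{-1}$. The proposed isomorphism is
\[
\Phi:\;\big[(l_1,l_2),\,v\big]\;\longmapsto\; v\cdot l_1^{\otimes k}\otimes l_2^{\otimes l}.
\]

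The one step carrying all the content is checking that $\Phi$ respects the equivalence relation. Since $(l_1 h_1^{-1})^{\otimes k}=h_1^{-k}\,l_1^{\otimes k}$ and $(l_2 h_2^{-1})^{\otimes l}=h_2^{-l}\,l_2^{\otimes l}$, the rescaling contributed by the tensor powers is exactly $h_1^{-k}h_2^{-l}$, which cancels the factor $h_1^k h_2^l=\chi_k\chi_l(h_1,h_2)$ attached to $v$ in the defining relation; hence equivalent pairs have the same image. This cancellation is precisely why the character carrying the exponents $(k,l)$ produces the bundle $kL_1+lL_2$, and it is the only point requiring genuine care — essentially a matter of keeping three sign conventions straight (the inverse in $h\cdot l=lh^{-1}$, the quotient convention for the associated bundle, and the reading of negative tensor powers as powers of duals).

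The remaining verifications are routine and I do not expect any real obstacle: $\Phi$ is $\C$-linear on each fiber and bijective, with inverse obtained by choosing local nonvanishing frames of $L_1$ and $L_2$ to produce preimages, and it is holomorphic since it is assembled from tensoring and scalar multiplication; thus $\Phi$ is an isomorphism of holomorphic line bundles. As an independent check one may trivialize $L_1,L_2$ over a common cover $\{U_\alpha\}$ with cocycles $g^{(1)}_{\alpha\beta},g^{(2)}_{\alpha\beta}$: the associated bundle then has cocycle $\chi_k\chi_l\big(g^{(1)}_{\alpha\beta},g^{(2)}_{\alpha\beta}\big)=(g^{(1)}_{\alpha\beta})^k(g^{(2)}_{\alpha\beta})^l$, which is manifestly that of $kL_1+lL_2$, recovering the same conclusion.
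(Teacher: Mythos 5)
Your proposal is correct and is essentially the paper's own proof: both write down the same global map $[(l_1,l_2),v]\mapsto v\,l_1^{\otimes k}\otimes l_2^{\otimes l}$, verify invariance under the $(\Cx)^2$-action so it descends to the associated bundle, and conclude by noting it is a fiberwise linear isomorphism over $X$. The extra remarks on reading negative tensor powers as duals and the transition-function cross-check are fine but not needed beyond what the paper already does.
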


\begin{proof}
Define a map
\begin{align*}
\rho: (L_1^\times \oplus L_2^\times) \times \C_{\chi_k\chi_l} &\rightarrow kL_1+lL_2,\\
                         ((l_1, l_2), c)              &\mapsto c l_1^{\otimes k}\otimes l_2^{\otimes l}.
\end{align*}
Since $(\Cx)^2$ acts on $(L_1^\times \oplus L_2^\times) \times \C_{\chi_k\chi_l} $ by:
\[(h_1,h_2)\cdot ((l_1, l_2), c) =((l_1h_1^{-1},l_2h_2^{-1}),c h_1^kh_2^l),\] 
we have
\[\rho((h_1,h_2)\cdot ((l_1, l_2), c))=c h_1^kh_2^l (l_1h_1^{-1})^{\otimes k}\otimes (l_2h_2^{-1})^{\otimes l }=\rho  (((l_1, l_2), c))\]
 for any $(h_1, h_2)\in (\Cx)^2$. It follows that $\rho$ descends to
\[\tilde{\rho}: (L_1^\times \oplus L_2^\times) \times_{(\mathbb{C}^\times)^2}\mathbb{C}_{\chi_k\chi_l} \rightarrow kL_1+lL_2.\]
It is clear that $\tilde{\rho}$ induces a linear isomorphism on fibers and   commutes with quotient maps to $X$, thus it is an isomorphism.
\end{proof}

\begin{cor}\label{2nd-prop}

Let $L_1, L_2$ be line bundles and $k,l\in\Z$ such that $K_X\simeq kL_1+lL_2$. Then $L_1^\times \oplus L_2^\times$ admits a CY $(\Cx)^2$-bundle structure with character $\chi_k\chi_l$.
\end{cor}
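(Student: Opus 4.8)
The plan is to deduce this directly from the obstruction criterion (Theorem \ref{obstruction thm}) together with the identification of the character map carried out in Proposition \ref{1st-prop}. Write $M := L_1^\times \oplus L_2^\times$ and $H := (\Cx)^2$. By the very definition of the character map, $\lambda_M(\chi_k\chi_l) = L_{\chi_k\chi_l} = (M\times\C_{\chi_k\chi_l})/H$, which is precisely the associated bundle $(L_1^\times\oplus L_2^\times)\times_{(\Cx)^2}\C_{\chi_k\chi_l}$ appearing in Proposition \ref{1st-prop}.

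First I would invoke Proposition \ref{1st-prop} to evaluate this associated bundle, obtaining $\lambda_M(\chi_k\chi_l)\simeq kL_1 + lL_2$ in $\Pic(X)$. Combined with the hypothesis $K_X\simeq kL_1+lL_2$, this yields $[K_X] = \lambda_M(\chi_k\chi_l)$, so in particular $K_X$ lies in the image of $\lambda_M$. At this stage the obstruction criterion (Theorem \ref{obstruction thm}) applies and guarantees that $M$ admits a CY structure.

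It then remains only to pin down the character. Since $H=(\Cx)^2$ is abelian, its adjoint representation is trivial, and hence $\chi_\gh = \wedge^{\dim\gh}\gh$ is the trivial character, as already noted in the introductory reduction to the torus case. I would invoke the classification of CY structures (Theorem \ref{classify CY structures}), under which CY structures correspond bijectively to $\lambda_M^{-1}([K_X])$ via $(\C\omega,\chi)\mapsto\chi\chi_\gh=\chi$. Because $\chi_k\chi_l\in\lambda_M^{-1}([K_X])$ by the previous paragraph, the corresponding CY structure has character $\chi=\chi_k\chi_l$, as claimed. (Equivalently, one may read this off the adjunction formula of Theorem \ref{adjunction}: the CY character satisfies $L_{\chi\chi_\gh}=L_\chi\simeq K_X$, and $\chi=\chi_k\chi_l$ realizes this.)

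There is no genuine obstacle here, since the substantive work is already packaged in Proposition \ref{1st-prop} and Theorem \ref{obstruction thm}; the corollary is essentially a specialization of the general criterion. The only point requiring care is the observation that $\chi_\gh$ is trivial for an abelian torus, which is what allows us to conclude that the CY character equals $\chi_k\chi_l$ exactly, rather than differing from it by the adjoint twist $\chi_\gh$.
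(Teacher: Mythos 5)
Your proposal is correct and follows essentially the same route as the paper: apply Proposition \ref{1st-prop} to identify $\lambda_M(\chi_k\chi_l)\simeq kL_1+lL_2\simeq K_X$, invoke the obstruction criterion (Theorem \ref{obstruction thm}) for existence, and use the triviality of $\chi_\gh$ together with the classification/adjunction theorems to pin down the character. Your appeal to Theorem \ref{classify CY structures} to get a CY structure whose character is exactly $\chi_k\chi_l$ is, if anything, slightly cleaner than the paper's direct appeal to adjunction, but the argument is the same in substance.
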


\begin{proof}
Put $H:=(\Cx)^2, M:=L_1^\times \oplus L_2^\times, \chi:=\chi_k\chi_l$.
Proposition \ref{1st-prop} tells us that
\[K_X\simeq kL_1+lL_2\simeq M\times_H \C_{\chi}=\lambda_M(\chi).\]
By the obstruction criterion, i.e. Theorem \ref{obstruction thm}, $L_1^\times \oplus L_2^\times$ admits a CY $H$-bundle structure $(\C\omega_M,\chi_M)$.
Moreover, since the $H$-character $\chi_\mathfrak{h}$ is trivial, $\chi_M=\chi\chi_{\mathfrak{h}}^{-1}=\chi$ by Theorem \ref{adjunction}.
\end{proof}

\begin{cor}
For every line bundle $L$, there exists a CY bundle $(\Cx)^2-M\ra X$ and a character $\chi\in\widehat{(\Cx)^2}$ such that $L\simeq M\times_{(\Cx)^2} \C_{\chi}$. 
\end{cor}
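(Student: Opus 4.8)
The plan is to reduce this immediately to the two preceding results, Proposition \ref{1st-prop} and Corollary \ref{2nd-prop}, by choosing the two line bundles cleverly. The key observation is that for a bundle of the shape $M = L_1^\times \oplus L_2^\times$, Proposition \ref{1st-prop} identifies $\lambda_M(\chi_k\chi_l) = kL_1 + lL_2$, so the image of the character map $\lambda_M$ is precisely the subgroup of $\Pic(X)$ generated by $L_1$ and $L_2$. Hence, to make a single $M$ serve both required purposes at once -- admitting a CY structure and having $L$ in its character image -- it suffices to arrange that both $K_X$ and $L$ lie in that subgroup.

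First I would set $L_1 := K_X$ and $L_2 := L$, and take $M := K_X^\times \oplus L^\times$, a principal $(\Cx)^2$-bundle over $X$. Applying Corollary \ref{2nd-prop} with $(k,l) = (1,0)$, the trivial relation $K_X \simeq 1\cdot K_X + 0\cdot L$ places $K_X$ in the image of $\lambda_M$, so the obstruction criterion (Theorem \ref{obstruction thm}) is satisfied and $M$ admits a CY $(\Cx)^2$-bundle structure, with character $\chi_1\chi_0$.

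Next, applying Proposition \ref{1st-prop} with $(k,l) = (0,1)$ gives $M \times_{(\Cx)^2} \C_{\chi_0\chi_1} \simeq 0\cdot K_X + 1\cdot L \simeq L$. Setting $\chi := \chi_0\chi_1$ then yields the desired isomorphism $L \simeq M \times_{(\Cx)^2} \C_{\chi}$, which completes the construction.

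There is essentially no hard step here: the statement falls out directly from the two results just established. The only point deserving care -- and the reason the choice $L_1 = K_X$, $L_2 = L$ is the natural one -- is that one and the same bundle $M$ must simultaneously clear the CY obstruction and contain $L$ in its character image; bundling $K_X$ together with $L$ guarantees both at once, with no divisibility or compatibility condition imposed on $L$.
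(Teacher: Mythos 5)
Your proposal is correct and is essentially identical to the paper's own proof: the paper takes $M=L^\times\oplus K_X^\times$ and uses $K_X=0\cdot L+1\cdot K_X$ to get the CY structure via Corollary \ref{2nd-prop}, then evaluates the character map at $\chi_1\chi_0$ to recover $L$, which is your argument with the two factors written in the opposite order.
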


\begin{proof}

Let $M:=L^\times\oplus K_X^\times $. Since $K_X=0\cdot L+1\cdot K_X$, Corollary \ref{2nd-prop} tells us that $M$ is a CY bundle with character $\chi_0\chi_1$. Now for $\chi=\chi_1\chi_0$, we have $M\times_{(\Cx)^2} \C_{\chi}\simeq 1\cdot L+0\cdot K_X=L$.
\end{proof}

\begin{exm}
Since $K_{\P ^n}=\mathcal{O}(-n-1)$, $K_{\P ^n}\simeq k\mathcal{O}(-1)+l\mathcal{O}(-1)$ whenever $k+l=n+1$ for $k,l\in\Z$. Corollary \ref{2nd-prop} shows that $M:=\mathcal{O}(-1)^\times\oplus\mathcal{O}(-1)^\times$ is a CY bundle over $\P^n$. We now give an explicit description of the CY structure $(\C\omega_M, \chi_M)$. 

We have just seen that $\chi_M=\chi_k\chi_l$.
We can take
$$
M=\{([m],m,m')| m\in\C^{n+1}\setminus\{0\},~ \C m=\C m'\}\subset\P^n\times\C^{n+1}\times\C^{n+1}.$$ 
Then we have a canonical isomorphism
$$
M\simeq (\C^{n+1}\setminus\{0\})\times\C^\times,~~([m],m,c m)\lra (m,c).
$$
The $H$-action then becomes $h\cdot(m,c)=(mh_1^{-1},ch_1h_2^{-1})$ for $h=(h_1,h_2)\in H$.
On the right hand side, we have the global coordinates $(z,\zeta)\equiv(z_0,..,z_n,\zeta):(m,c)\mapsto(m_0,..,m_n,c)$. 
We then find that the CY structures on $M$ are just $(\C\omega_l,\chi_k\chi_l)$, $l\in\Z$, where
$$
\omega_l:=\zeta^{l-1}dz_0\wedge\cdots\wedge dz_n\wedge d\zeta,\bl\bl\chi_{k}\chi_l(h)=h_1^kh_2^l.
$$
\end{exm}

\bs

We can generalize Proposition \ref{1st-prop} and Corollary \ref{2nd-prop} to cases involving finitely many line bundles:

\begin{thm}\label{discrete}
Given holomorphic line bundles $L_1,\cdots,L_p$ on $X$, we have an isomorphism of holomorphic line bundles:
\[(L_1^{\times}\oplus L_2^{\times}\oplus \cdots \oplus L_p^{\times})\times_{(\Cx)^p}\C_{\chi_{k_1}\cdots\chi_{k_p}}\simeq k_1L_1+\cdots+k_pL_p \]
where $\chi_{k_1}\cdots\chi_{k_p}: (\Cx)^p\rightarrow\Cx, (h_1,\cdots,h_p)\mapsto h_1^{k_1}\cdots h_p^{k_p}$.
\end{thm}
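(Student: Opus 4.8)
The plan is to mimic the proof of Proposition \ref{1st-prop} verbatim, replacing the two factors by $p$ factors; the passage from $p=2$ to general $p$ is purely a matter of bookkeeping, so no genuinely new idea is needed. First I would write $M := L_1^\times \oplus \cdots \oplus L_p^\times$, viewed as a principal $(\Cx)^p$-bundle with action $(h_1,\dots,h_p)\cdot(l_1,\dots,l_p)=(l_1 h_1^{-1},\dots,l_p h_p^{-1})$, and define the candidate bundle map directly:
\[\rho: (L_1^\times \oplus \cdots \oplus L_p^\times)\times\C_{\chi_{k_1}\cdots\chi_{k_p}} \ra k_1 L_1+\cdots+k_p L_p,\quad ((l_1,\dots,l_p),c)\mapsto c\, l_1^{\otimes k_1}\otimes\cdots\otimes l_p^{\otimes k_p}.\]
Here one point deserves care: since each $l_i\in L_i^\times$ is a nonzero vector in the fiber $(L_i)_x$, it is invertible as an element of that one-dimensional space, so $l_i^{\otimes k_i}$ is a well-defined nonzero element of $(k_i L_i)_x$ for every integer $k_i$ (negative powers being interpreted via the dual $L_i^\vee$). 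Consequently $\rho$ lands in the fiber of $k_1 L_1+\cdots+k_p L_p$ over the common base point $x=\pi(l_1)=\cdots=\pi(l_p)$.

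Second I would verify $(\Cx)^p$-invariance exactly as in the $p=2$ case. Under $(h_1,\dots,h_p)$ the source point becomes $((l_1 h_1^{-1},\dots,l_p h_p^{-1}),\, c\, h_1^{k_1}\cdots h_p^{k_p})$, and
\[\rho\big((h_1,\dots,h_p)\cdot((l_1,\dots,l_p),c)\big)=c\,h_1^{k_1}\cdots h_p^{k_p}\,(l_1 h_1^{-1})^{\otimes k_1}\otimes\cdots\otimes(l_p h_p^{-1})^{\otimes k_p}=c\, l_1^{\otimes k_1}\otimes\cdots\otimes l_p^{\otimes k_p},\]
the scalars $h_i^{k_i}$ cancelling against the $h_i^{-k_i}$ pulled out of each tensor factor. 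Hence $\rho$ is constant on $(\Cx)^p$-orbits and descends to a map $\tilde\rho$ on the quotient $M\times_{(\Cx)^p}\C_{\chi_{k_1}\cdots\chi_{k_p}}$.

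Finally I would check that $\tilde\rho$ is an isomorphism of line bundles over $X$, again copying Proposition \ref{1st-prop}: it commutes with the projections to $X$, and on each fiber it is induced by the nonzero linear map $c\mapsto c\, l_1^{\otimes k_1}\otimes\cdots\otimes l_p^{\otimes k_p}$ between one-dimensional spaces, hence a linear isomorphism; a fiberwise linear isomorphism covering the identity on $X$ is an isomorphism of line bundles. The only things to keep honest are the interpretation of $l_i^{\otimes k_i}$ for negative exponents and the fact that $M$ really is a principal $(\Cx)^p$-bundle, so that the quotient is again a line bundle; both are immediate, since each $\Cx$-factor acts freely and transitively on its own fiber $L_i^\times$. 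I do not expect any real obstacle here, as the statement is the evident multi-factor extension of Proposition \ref{1st-prop}; if desired it could equally be obtained by induction on $p$, grouping $L_1^\times\oplus\cdots\oplus L_{p-1}^\times$ and applying Proposition \ref{1st-prop} to combine it with $L_p^\times$.
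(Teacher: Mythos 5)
Your proposal is correct and follows exactly the route the paper intends: the paper states Theorem \ref{discrete} without proof, presenting it as the evident multi-factor generalization of Proposition \ref{1st-prop}, and your argument is precisely that generalization (same map $\rho$, same invariance check, same fiberwise-isomorphism conclusion). Your added care about interpreting $l_i^{\otimes k_i}$ for negative $k_i$ via the dual bundle is a reasonable point the paper also leaves implicit in the $p=2$ case.
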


\begin{cor}
If there exist line bundles $L_1, \cdots, L_p$ and integers $k_1,\cdots, k_p$ such that $k_1L_1+k_2L_2+\cdots+k_pL_p\simeq K_X$, then $L_1^{\times}\oplus L_2^{\times}\oplus \cdots \oplus L_p^{\times}$ is a CY $(\Cx)^p$-bundle over $X$ with character $\chi_{k_1}\chi_{k_2}\cdots\chi_{k_p}$.
\end{cor}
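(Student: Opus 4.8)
The plan is to mimic the proof of Corollary \ref{2nd-prop} essentially verbatim, replacing the pair $(L_1,L_2)$ by the tuple $(L_1,\ldots,L_p)$ and invoking the $p$-fold generalization Theorem \ref{discrete} in place of Proposition \ref{1st-prop}. First I would fix the notation $H:=(\Cx)^p$, $M:=L_1^{\times}\oplus\cdots\oplus L_p^{\times}$, and $\chi:=\chi_{k_1}\cdots\chi_{k_p}$, recalling that $M$ is a principal $H$-bundle over $X$ under the componentwise action.

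The core computation is to identify the image of $\chi$ under the character map. By Theorem \ref{discrete},
\[
\lambda_M(\chi)=M\times_H\C_{\chi}\simeq k_1L_1+\cdots+k_pL_p,
\]
and by hypothesis the right-hand side is isomorphic to $K_X$. Hence $K_X$ lies in the image of $\lambda_M$, and the obstruction criterion (Theorem \ref{obstruction thm}) immediately produces a CY structure $(\C\omega_M,\chi_M)$ on $M$.

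It then remains to pin down the character $\chi_M$ of this structure. Here I would combine two facts: since $H=(\Cx)^p$ is abelian, its adjoint action on $\gh$ is trivial, so $\chi_\gh=\wedge^{\dim\gh}\gh$ is the trivial character; and the adjunction formula (Theorem \ref{adjunction}), equivalently the classification bijection (Theorem \ref{classify CY structures}) which sends the CY structure to $\chi_M\chi_\gh\in\lambda_M^{-1}([K_X])$. Because $\chi_\gh$ is trivial and $\chi\in\lambda_M^{-1}([K_X])$ by the computation above, the structure corresponding to $\chi$ has character $\chi_M=\chi\chi_\gh^{-1}=\chi=\chi_{k_1}\cdots\chi_{k_p}$, which is exactly the claim.

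I expect no genuine obstacle: every step is a direct specialization of results already established for two line bundles, and the only point needing a word of care is the triviality of $\chi_\gh$, which is forced by the commutativity of $(\Cx)^p$.
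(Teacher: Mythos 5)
Your proposal is correct and follows exactly the route the paper intends: the paper leaves this corollary unproved precisely because it is the verbatim $p$-fold generalization of the proof of Corollary \ref{2nd-prop}, namely Theorem \ref{discrete} plus the obstruction criterion (Theorem \ref{obstruction thm}), with the character pinned down via adjunction and the triviality of $\chi_\gh$ for the abelian group $(\Cx)^p$. No gaps.
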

\bs

\section{Rigidity of $(\Cx)^p$-principal bundles}\label{sect-rigidity}
{
First we discuss a decomposition property of certain principal bundles.
\begin{prop}
Suppose $M$ is a holomorphic principal $(H_1\times H_2)$-bundle over $X$. Then $H_1-M/{H_2}\ra X$ and $H_2-M/{H_1}\ra X$ are holomorphic principal bundles and there is an isomorphism of holomorphic principal bundles:
\[M\simeq (M/{H_2})\oplus (M/{H_1}).\]
\end{prop}

\begin{proof}
Denote $p:M\ra X$ the projection map. 
A general element in $(M/{H_2})\oplus (M/{H_1})$ is of the form $(m_1H_2,m_2H_1)$ where $m_1,m_2\in M$ and $p(m_1)=p(m_2)$.
Given $(h_1,h_2)\in H_1\times H_2,$ the action is
$$(h_1,h_2)\cdot (m_1H_2,m_2H_1)=((m_1h_1^{-1})H_2,(m_2h_2^{-1})H_1).$$

Define $$\varphi: M\ra  (M/{H_2})\oplus (M/{H_1}),\quad m\mapsto (mH_1,mH_2).$$
Given $h_i\in H_i$ for $i=1,2$, then $h_1h_2=h_2h_1\in H_1\times H_2.$
Then 
\begin{align*}
\varphi((h_1h_2)m)&=((mh_1^{-1}h_2^{-1})H_2,(mh_2^{-1}h_1^{-1})H_1)\\
&=((mh_1^{-1})H_2,(mh_2^{-1})H_1)=(h_1,h_2)\cdot \varphi(m).
\end{align*}
Thus $\varphi$ is $(H_1\times H_2)$-equivariant.

Let $q$ be the projection $(M/{H_2})\oplus (M/{H_1})\ra X,$
then $q(m_1H_2,m_2H_1)=p(m_1)=p(m_2)$. In particular $q(\varphi(m))=q(mH_1,mH_2)=q(m)$. Thus $\varphi$ commutes with projection maps.

Therefore $\varphi$ is a principal morphism over $X$. It is showed on \cite[p.43]{Hu} that every principal morphism over $X$ is an isomorphism for continuous principal bundles. Following the proof of \cite{Hu}, we see that $\varphi$ is injective and $\varphi^{-1}$ is continuous. Since $\varphi$ is also holomorphic,  we can further conclude that  $\varphi^{-1}$ is holomorphic (Proposition on \cite[p.19]{GH}) and thus $\varphi$ is biholomorphic. Therefore $\varphi$ is an isomorphism of holomorphic principal bundles.
\end{proof}

\begin{cor}
Suppose $M$ is a holomorphic principal $(H_1\times\cdots\times H_p)$-bundle over $X$. Let $M_i:=M/(H_1\times \cdots \times H_{i-1}\times H_{i+1}\times\cdots\times H_p)$. Then $M_i$ is a principal $H_i$-bundle over $X$. And $M$ admits a Whitney  sum decomposition: 
\[M\simeq M_1\oplus\cdots\oplus M_p.\]
\end{cor}

In particular, if $M$ is a principal $(\Cx)^p$-bundle over $X$. Then $M$ admits a Whitney sum decomposition:
\[M\simeq M_1\oplus\cdots\oplus M_p\]
where $M_i$ is a $\Cx$-bundle over $X$ for all $i$. By Lemma 3.7 in \cite{LY}, there exist line bundles $L_i\in\Pic(X)$ such that $M_i\simeq L_i^\times$ as principal $\Cx$-bundles for all $i$. Thus we have an isomorphism of holomorphic principal $(\Cx)^p$-bundles
\[M\simeq L_1^\times\oplus\cdots\oplus L_p^\times,\]
where the $(\Cx)^p$-action on the right hand side is
\[(h_1,\ldots,h_p)\cdot(l_1,\ldots,l_p)=(l_1h_1^{-1},\ldots, l_ph_p^{-1})\]
for $h_i\in\Cx,\; l_i\in L_i^\times.$

Our idea is to first decompose a given $(\Cx)^p$-bundle in terms of a Whitney sum, and then compare the  actions of different $(\Cx)^p$-bundles. 
\bs
}


For the rest of this section we set $H:=(\Cx)^p$. Assume $M$, $N$ are two $H$-principal bundles over a complex manifold $X$. Then we have character maps:
\begin{align*}
\lambda_M: \widehat{H}&\ra \Pic(X)\\
\gamma& \mapsto L^M_\gamma:=M\times_H \C_{\gamma}
\end{align*}
and
\begin{align*}
\lambda_N: \widehat{H}&\ra \Pic(X)\\
\rho& \mapsto L^N_\rho:=N\times_H \C_{\rho}
\end{align*}
where $\gamma, \rho\in \widehat{H}$.

We assume that there is an automorphism $\xi\in \Aut(\widehat{H})$ such that the following diagram\[\xymatrix{
\widehat{H}\ar[r]^{\lambda_M\quad}\ar[d]^{\simeq}_{\xi}&\Pic(X)\\
\widehat{H}\ar[ur]_{\lambda_N}
}\]
commutes.

Since 
$$\widehat{H}=\widehat{(\Cx)^p}=(\widehat{\Cx})^p\simeq\Z^p,$$
we can pick a $\Z$-basis $\{\gamma_1,\ldots,\gamma_p\}$ of $\widehat{H}$. Let $\rho_i:=\xi(\gamma_i),$ since $\xi$ is an automorphism, $\{\rho_1,\ldots,\rho_p\}$ is also an $\Z$-basis of $\widehat{H}$. Moreover, the commutative diagram tells us that there exists a holomorphic line bundle isomorphism $\eta_i:L^M_{\gamma_i}\ra L^N_{\rho_i}$.

Let $[m, c]^M_{\gamma_i}$ denote the class of $(m,c)\in M\times\C$ in $L^M_{\gamma_i}.$

\begin{lem}\label{construction of H-bundle}
The bundle $(L_{\gamma_1}^M)^\times\oplus\cdots\oplus(L_{\gamma_p}^M)^\times$ with $H$-action
\[h\cdot ([m_1,c_1]^M_{\gamma_1},\ldots, [m_p,c_p]^M_{\gamma_p})=([m_1h^{-1},c_1]^M_{\gamma_1},\ldots, [m_ph^{-1},c_p]^M_{\gamma_p})\]
is a principal $H$-bundle over $X$.
\end{lem}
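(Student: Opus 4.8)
The plan is to reduce the unfamiliar-looking $H$-action to the standard fiberwise scaling action on a Whitney sum of $\Cx$-bundles, which is manifestly principal, by exploiting that $\{\chi_1,\ldots,\chi_p\}$ is a $\Z$-basis of $\widehat H$.

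First I would check that the formula is well defined, i.e. independent of the chosen representatives $(m_i,c_i)$ of the classes $[m_i,c_i]^M_{\chi_i}$. Recall that the associated bundle $L^M_{\chi_i}=M\times_H\C_{\chi_i}$ carries the defining relation $[m,c]^M_{\chi_i}=[mg^{-1},\chi_i(g)c]^M_{\chi_i}$ for all $g\in H$. If $(m_i',c_i')=(m_ig^{-1},\chi_i(g)c_i)$, then, because $H$ is abelian, $m_i'h^{-1}=m_ih^{-1}g^{-1}$, and the relation gives
\[
[m_i'h^{-1},c_i']^M_{\chi_i}=[m_ih^{-1}g^{-1},\chi_i(g)c_i]^M_{\chi_i}=[m_ih^{-1},c_i]^M_{\chi_i}.
\]
Thus the action descends to the quotient, the commutativity of $H$ being exactly what makes this step go through. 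The group-action axioms and compatibility with the projection to $X$ (each $m_ih^{-1}$ lies in the same fiber $M_x$ as $m_i$) are then immediate.

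Next comes the key simplification: applying the same relation once more yields $h\cdot[m_i,c_i]^M_{\chi_i}=[m_ih^{-1},c_i]^M_{\chi_i}=[m_i,\chi_i(h)^{-1}c_i]^M_{\chi_i}$, so on the $i$-th summand the $H$-action is simply multiplication by the scalar $\chi_i(h)^{-1}$. Hence the whole action factors as the homomorphism
\[
\kappa: H\longrightarrow(\Cx)^p,\qquad h\longmapsto(\chi_1(h)^{-1},\ldots,\chi_p(h)^{-1}),
\]
followed by the standard fiberwise scaling of $(\Cx)^p$ on $(L^M_{\chi_1})^\times\oplus\cdots\oplus(L^M_{\chi_p})^\times$. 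I would then observe that $\kappa$ is an isomorphism of complex Lie groups: since $\{\chi_1,\ldots,\chi_p\}$ is a $\Z$-basis of $\widehat H\simeq\Z^p$, the map $h\mapsto(\chi_1(h),\ldots,\chi_p(h))$ is precisely the coordinate isomorphism $H\xrightarrow{\sim}(\Cx)^p$, and post-composing with inversion keeps it an isomorphism.

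Finally, the standard scaling action of $(\Cx)^p$ on a Whitney sum of $\Cx$-bundles is free and transitive on each fiber and exhibits the total space as a principal $(\Cx)^p$-bundle, with local trivializations inherited from those of the individual line bundles. Transporting this structure along the isomorphism $\kappa$ shows that the given $H$-action is free and transitive on fibers and that local triviality persists, so the total space is a principal $H$-bundle over $X$. The only genuinely delicate point is the well-definedness in the first step, which relies essentially on $H$ being abelian; once the action is identified with the standard scaling via the character coordinates, freeness, transitivity, and local triviality all become automatic.
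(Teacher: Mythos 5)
Your proof is correct and follows essentially the same route as the paper: both rewrite the action as fiberwise scaling by $\chi_i(h)^{-1}$ on the $i$-th summand and then invoke the fact that $\hat\chi=(\chi_1,\ldots,\chi_p)$ (equivalently, your $\kappa$) is an automorphism $H\xrightarrow{\sim}(\Cx)^p$ because the $\chi_i$ form a $\Z$-basis of $\widehat H$, so the action is free and transitive on fibers. Your explicit well-definedness check at the start is a minor addition that the paper leaves implicit.
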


\begin{proof}
It is clear that $(L_{\gamma_1}^M)^\times\oplus\cdots\oplus(L_{\gamma_p}^M)^\times$ is a bundle over $X$ with fiber $(\C^\times)^p=H$ and the $H$-action preserves the fiber. Since $[m_ih^{-1},\gamma(h)c_i]=[m_i,c_i]$, we have $[m_ih^{-1},c_i]=[m_i,\gamma(h^{-1})c_i]$. We can rewrite the $H$-action as
\[h\cdot ([m_1,c_1]^M_{\gamma_1},\ldots, [m_p,c_p]^M_{\gamma_p})=([m_1,\gamma_1(h^{-1})c_1]^M_{\gamma_1},\ldots, [m_p,\gamma_p(h^{-1})c_p]^M_{\gamma_p}) .\]

\begin{claim}
$\hat{\gamma}:=(\gamma_1,\ldots,\gamma_p)$ gives an automorphism of $H$. 
\end{claim}

\begin{proof}[Proof of claim] It is clear that $\hat{\gamma}$ is a homomorphism of $H$. Note that $\Aut(H)\simeq \text{GL}_p(\Z)$.
Since $\gamma_i\in\widehat{H}\simeq \Z^p$, there exists $a^i_1,\ldots, a^i_p\in\Z$ such that 
\begin{align*}
\gamma_i: H&\ra \C^\times\\
(h_1,\ldots,h_p)&\mapsto h_1^{a^i_1}\cdots h_p^{a^i_p}.
\end{align*}
In this way we may represent $\gamma_i$ by $(a^i_1,\ldots, a^i_p)$, then $\hat{\gamma}$ can be represented by the matrix $(a^i_j)$. Since $\{\gamma_1,\ldots,\gamma_q\}$ is a $\Z$-basis of $\widehat{H}$, we have $\det(a^i_j)=1$ and $(a^i_j)\in \text{GL}_p(\Z)$. Thus $\hat{\gamma}\in \Aut(H)$.
\end{proof}

Thus $H$ acts freely and transitively on the fiber of $(L_{\gamma_1}^M)^\times\oplus\cdots\oplus(L_{\gamma_p}^M)^\times$, i.e., it is a principal $H$-bundle.
\end{proof}

Similarly, $(L^N_{\rho_1})^\times\oplus\cdots\oplus(L^N_{\rho_p})^\times$ is a principal $H$-bundle with the action
\[h\cdot ([n_1,d_1]^M_{\rho_1},\ldots, [n_p,d_p]^M_{\rho_p})=([n_1h^{-1},d_1]^M_{\rho_1},\ldots, [n_ph^{-1},d_p]^M_{\rho_p}).\]

\begin{claim}\label{Misom}
The map
\begin{align*}
\alpha: M & \ra (L_{\gamma_1}^M)^\times\oplus\cdots\oplus(L_{\gamma_p}^M)^\times\\
m&\mapsto ([m,1]^M_{\gamma_1},\ldots, [m,1]^M_{\gamma_p})
\end{align*}
is an isomorphism of principle $H$-bundles.
\end{claim}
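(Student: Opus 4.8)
The plan is to recognize $\alpha$ as a morphism of principal $H$-bundles over $X$ and then invoke the standard fact that any such morphism covering the identity is automatically an isomorphism. So I would split the verification into three bundle-morphism axioms followed by the general invertibility principle.

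First I would check that $\alpha$ is a well-defined holomorphic map whose image actually lies in the product of the complements of the zero sections. Each component $m \mapsto [m,1]^M_{\chi_i}$ is the composite of $m \mapsto (m,1)$ with the quotient map $M\times\C_{\chi_i}\ra L^M_{\chi_i}$, hence holomorphic; and since $1\neq 0$, the class $[m,1]^M_{\chi_i}$ is never the zero element, so $\alpha(m)$ genuinely lands in $(L^M_{\chi_1})^\times\oplus\cdots\oplus(L^M_{\chi_p})^\times$. Moreover $[m,1]^M_{\chi_i}$ lies in the fiber over $\pi(m)$, so $\alpha$ covers $\id_X$.

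Next I would verify $H$-equivariance, which is the only computational point. Using the action $h\cdot m=mh^{-1}$ on $M$ and the action on the Whitney sum supplied by Lemma \ref{construction of H-bundle}, one computes
$$\alpha(h\cdot m)=\alpha(mh^{-1})=([mh^{-1},1]^M_{\chi_1},\ldots,[mh^{-1},1]^M_{\chi_p})=h\cdot\alpha(m).$$
Finally, with $\alpha$ a holomorphic $H$-equivariant map of principal $H$-bundles over $X$ covering $\id_X$, I would conclude it is an isomorphism. Restricted to any fiber, $\alpha$ is an $H$-equivariant map between two $H$-torsors; such a map sends a chosen point $m_0$ to $\alpha(m_0)$ and is then forced to be the bijection $hm_0\mapsto h\,\alpha(m_0)$ by equivariance, using freeness and transitivity of the $H$-action on the fibers of a principal bundle. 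Thus $\alpha$ is a fiberwise bijection, and working in local trivializations—where it takes the form $(x,h)\mapsto(x,\phi(x)h)$ for a holomorphic $\phi:U\ra H$—shows the set-theoretic inverse is holomorphic, so $\alpha$ is a biholomorphism and hence an isomorphism of principal $H$-bundles.

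The main obstacle is essentially bookkeeping with conventions: one must make sure the inverse/sign choices in the quotient relation $[mh^{-1},\chi(h)c]=[m,c]$ and in the two prescribed $H$-actions line up so that the equivariance computation really collapses to the displayed identity. Once the conventions are pinned down this is routine, and the concluding step is just the general principle that a base-preserving equivariant map between principal $H$-bundles is automatically invertible.
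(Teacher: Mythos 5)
Your proposal is correct and follows essentially the same route as the paper: verify $H$-equivariance and that $\alpha$ covers $\id_X$, then invoke the fact that a base-preserving morphism of principal $H$-bundles is automatically an isomorphism (the paper cites Husemoller for this last step, whereas you sketch its proof via the fiberwise torsor argument and local trivializations).
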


\begin{proof}Since
\[\alpha(h\cdot m)=([m_1h^{-1},1]^M_{\gamma_1},\ldots, [m_ph^{-1},1]^M_{\gamma_p})=h\cdot \alpha(m),\]
$\alpha$ is $H$-equivariant.

Let $p: M\ra X$ and $p':(L_{\gamma_1}^M)^\times\oplus\cdots\oplus(L_{\gamma_p}^M)^\times \ra X$ denote the projections respectively. Then $p'\circ \alpha(m)=p'(([m,1]^M_{\gamma_1},\ldots, [m,1]^M_{\gamma_p}))=p(m).$ Thus $\alpha$ is a principal morphism over $X$. {
Therefore $\alpha$ is an isomorphism of holomorphic principal $H$-bundles over $X$.}
\end{proof}

Similarly, let $\hat{\rho}:=(\rho_1,\ldots,\rho_p)$, then $\hat{\rho}\in \Aut(H)$, and we have an isomorphism of principal $H$-bundles:
\[\beta: N  \ra (L^N_{\rho_1})^\times\oplus\cdots\oplus(L^N_{\rho_p})^\times.\]

\bigskip

Now we can see that by definition $(L_{\gamma_1}^M)^\times\oplus\cdots\oplus(L_{\gamma_p}^M)^\times\simeq (L^N_{\rho_1})^\times\oplus\cdots\oplus(L^N_{\rho_p})^\times$ as holomorphic $H$-bundles, but the $H$-actions are different.

\begin{defn}
Given $\sigma\in \Aut(H)$ and a principal $H$-bundle $H-M\ra X$. Define $M^\sigma$ to be the bundle twisted by $\sigma$ as follows.
The total space of $M^\sigma$ is $M$ as a complex manifold, but the $H$-action on $M^\sigma$ is defined to be
\begin{align*}
H\times M^\sigma&\ra M^\sigma\\
(h,m)&\mapsto m(\sigma(h)^{-1}).
\end{align*}
\end{defn}

It is clear from the definition that the $H$-action on $M^\sigma$ preserves the fiber and acts freely and transitively on the fiber, so it is a principal $H$-bundle as well.

\begin{defn}Given principal bundles $H-M_i\ra X, i=1,2$,
we call a holomorphic bundle map $\varphi:M_1\ra M_2$ a twisted principal $H$-bundle isomorphism if there exists $\sigma\in \Aut(H)$ such that $\varphi:M_1\ra M_2^\sigma$ is a principal $H$-bundle isomorphism. In this case, we say that $M_1$ is isomorphic to $M_2$ up to a twist by $\sigma$.
\end{defn}

\begin{thm}[Rigidity of CY bundles]\label{unique}
For $H=(\Cx)^p$, assume $M$, $N$ are two $H$-principal bundles over a complex manifold $X$ and there  exists an automorphism $\xi\in \Aut(\widehat{H})$ such that the following diagram\[\xymatrix{
\widehat{H}\ar[r]^{\lambda_M}\ar[d]^{\simeq}_{\xi}&\Pic(X)\\
\widehat{H}\ar[ur]_{\lambda_N}
}\]
commutes. Then
$M$ is isomorphic to $N$ up to a twist by  the induced automorphism $\xi^\vee\in\Aut H$.
\end{thm}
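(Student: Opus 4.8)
The plan is to reduce everything to the two Whitney sums
$P := (L^M_{\chi_1})^\times\oplus\cdots\oplus(L^M_{\chi_p})^\times$ and
$Q := (L^N_{\rho_1})^\times\oplus\cdots\oplus(L^N_{\rho_p})^\times$, which by Claim \ref{Misom} and its analogue $\beta$ are isomorphic to $M$ and $N$ respectively as principal $H$-bundles. Since it suffices to produce a twisted principal isomorphism $P\to Q$ and then conjugate by the genuine equivariant isomorphisms $\alpha,\beta$, I would work entirely with $P$ and $Q$. The commutativity of the diagram gives $\lambda_M(\chi_i)=\lambda_N(\xi(\chi_i))=\lambda_N(\rho_i)$ in $\Pic(X)$, so the line bundle isomorphisms $\eta_i:L^M_{\chi_i}\to L^N_{\rho_i}$ exist; restricting each to complements of zero sections and assembling them componentwise yields a biholomorphism $\eta:=\eta_1\oplus\cdots\oplus\eta_p:P\to Q$ over $X$. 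The whole point of the theorem is that $\eta$ is $H$-equivariant, not for the two given actions, but after twisting the target by $\xi^\vee$.

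The heart of the argument is a single intertwining computation. Using the rewritten form of the action in Lemma \ref{construction of H-bundle}, $h$ acts on the $i$-th slot of $P$ by scalar multiplication by $\chi_i(h)^{-1}$ in the fiber of $L^M_{\chi_i}$, and on the $i$-th slot of $Q$ by $\rho_i(h)^{-1}$ in the fiber of $L^N_{\rho_i}$. Because each $\eta_i$ is an isomorphism of line bundles, it is $\C$-linear on fibers and therefore automatically commutes with these scalar multiplications: for $v_i\in(L^M_{\chi_i})^\times$ one computes $\eta_i(h\cdot v_i)=\eta_i(\chi_i(h)^{-1}v_i)=\chi_i(h)^{-1}\eta_i(v_i)$. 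I would then seek the unique $g\in H$ for which this coincides with the $Q$-action $g\cdot\eta_i(v_i)=\rho_i(g)^{-1}\eta_i(v_i)$ in every slot simultaneously; this is exactly the system of requirements $\rho_i(g)=\chi_i(h)$ for all $i$.

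Finally I would identify $g$. Since $\{\chi_1,\ldots,\chi_p\}$ is a $\Z$-basis of $\widehat H$, the characters $\chi_i$ separate the points of $H$, so $g$ is uniquely determined by the $p$ equations $\rho_i(g)=\chi_i(h)$. Writing $\rho_i=\xi(\chi_i)$ and taking $\xi^\vee\in\Aut H$ to be the induced dual automorphism, normalized by $\xi(\chi)(\xi^\vee(h))=\chi(h)$ for all $\chi\in\widehat H$, $h\in H$, the choice $g=\xi^\vee(h)$ solves all $p$ equations, and by separation it is the only solution. Hence $\eta(h\cdot v)=\xi^\vee(h)\cdot\eta(v)$, which is precisely the assertion that $\eta:P\to Q^{\xi^\vee}$ is $H$-equivariant, i.e.\ a principal $H$-bundle isomorphism onto the twist. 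Conjugating, $\Phi:=\beta^{-1}\circ\eta\circ\alpha:M\to N^{\xi^\vee}$ satisfies $\Phi(h\cdot m)=\xi^\vee(h)\cdot\Phi(m)=h\cdot_{N^{\xi^\vee}}\Phi(m)$, so $\Phi$ is a principal isomorphism onto $N^{\xi^\vee}$ and $M$ is isomorphic to $N$ up to a twist by $\xi^\vee$.

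I expect the main obstacle to be purely bookkeeping: keeping the left/right conventions and the inverses consistent across the associated-bundle class $[m,c]_{\chi}$, the principal action of Lemma \ref{construction of H-bundle}, and the twist $N^\sigma$, and in particular fixing the normalization of $\xi^\vee$ so that the twist comes out to be exactly $\xi^\vee$ rather than its inverse. The one genuinely conceptual point — that a line-bundle isomorphism records nothing about the $H$-action and hence merely relabels the character $\chi_i$ by $\rho_i$ — is what converts the hypothesis $\lambda_M=\lambda_N\circ\xi$ into the dual twist $\xi^\vee$.
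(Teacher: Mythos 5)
Your proposal is correct and follows essentially the same route as the paper: reduce to the Whitney sums via Claim \ref{Misom}, assemble $\eta=\eta_1\oplus\cdots\oplus\eta_p$, use the fact that line-bundle isomorphisms commute with fiberwise scaling, and verify equivariance onto the twist via the relation $\rho_i\circ\xi^\vee=\chi_i$ (your normalization $\xi(\chi)(\xi^\vee(h))=\chi(h)$ agrees with the paper's $\xi^\vee=\hat\rho^{-1}\circ\hat\chi$). The only cosmetic difference is that you characterize $\xi^\vee$ implicitly by the separation-of-points argument, whereas the paper constructs it explicitly as an inverse to the map $f\mapsto f^\vee$.
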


\begin{proof}
First we describe the induced automorphism $\xi^{\vee}.$ It is clear that there is a natural homomorphism
\begin{align*}
\theta: \Aut H&\ra \Aut(\widehat{H})\\
f&\mapsto f^\vee:=(\gamma\mapsto \gamma\circ f^{-1})
\end{align*}
for $f\in \Aut H$ and $\gamma\in \widehat{H}$. The inverse of this map does not necessarily exist for general $H$. But in the case $H=(\Cx)^p$, 
$$\Aut H\simeq \text{GL}_p(\Z)\simeq \Aut(\widehat{H}),$$ we can construct an inverse of $\theta$.

\begin{claim}
Given a basis $\{\gamma_1,\ldots,\gamma_p\}$ of $\widehat{H}$, let $\hat{\gamma}:=(\gamma_1,\ldots,\gamma_p)$ and $\hat{\rho}:=(\xi({\gamma_1}),\ldots,\xi({\gamma_p}))$. Then 
\begin{align*}
\tau:  \Aut(\widehat{H})&\ra\Aut H\\
\xi&\mapsto \xi^\vee:=\hat{\rho}^{-1}\circ\hat{\gamma}
\end{align*}
is an inverse of $\theta.$
\end{claim}
\begin{proof}[Proof of claim:]
It is clear that $\xi^\vee\in\Aut H$. We want to show $(\theta\circ\tau)(\xi)=\xi$, it suffices to show that on the $\Z$-basis we have: $((\theta\circ\tau)(\xi))(\gamma_i)=\xi(\gamma_i)=\rho_i$.

Let $\pr_i:H\ra\Cx$ be the $i$-th projection, then $\gamma_i=\pr_i\circ \hat{\gamma}$ and $\rho_i=\pr_i\circ \hat{\rho}$. 
Thus 
\[((\theta\circ\tau)(\xi))(\gamma_i)=\gamma_i\circ(\xi^\vee)^{-1}=\pr_i\circ\hat{\rho}=\rho_i,\]
as expected.
\end{proof}

Consider $M':=(L_{\gamma_1}^M)^\times\oplus\cdots\oplus(L_{\gamma_p}^M)^\times$ and $N':=(L^N_{\rho_1})^\times\oplus\cdots\oplus(L^N_{\rho_p})^\times$ with $H$-actions described as in Lemma \ref{construction of H-bundle}. 
Let $\eta:=\eta_1\oplus\cdots\oplus\eta_p$, then $\eta: M'\ra N'$ is a holomorphic $H$-bundle isomorphism. Given any $[m_i, c_i]^M_{\gamma_i}\in L^M_{\gamma_i}$, let $[n_i, d_i]^N_{\rho_i}:=\eta_i([m_i, c_i]^M_{\gamma_i})$. Since isomorphism of line bundles preserves scaling on fibers, given any $a_i\in\Cx$, $\eta_i([m_i, c_i\cdot a_i]^M_{\gamma_i})=[n_i, d_i\cdot a_i]^N_{\rho_i}$.

The $H$-action on
$(N')^{\xi^\vee}$ becomes:
\begin{align*}
h\cdot _{{\xi^\vee}}([n_1,d_1]^N_{\rho_1},\ldots, [n_p,d_p]^N_{\rho_p})&=([n_1{\xi^\vee}(h^{-1}),d_1]^N_{\rho_1},\ldots, [n_p{\xi^\vee}(h^{-1}),d_p]^N_{\rho_p}) \\
&=([n_1,d_1\rho_1({\xi^\vee}(h^{-1}))]^N_{\rho_1},\ldots, [n_p,d_p\rho_p({\xi^\vee}(h^{-1}))]^N_{\rho_p})\\
&=([n_1,d_1\gamma_1(h^{-1})]^N_{\rho_1},\ldots, [n_p,d_p\gamma_p(h^{-1})]^N_{\rho_p}),
\end{align*}
since $\rho_i\circ\xi^\vee=\pr_i\circ\hat{\rho}\circ\hat{\rho}^{-1}\circ\hat{\gamma}=\pr_i\circ\hat{\gamma}=\gamma_i$.

On the other hand,
\begin{align*}
\eta(h\cdot  ([m_1,c_1]^M_{\gamma_1},\ldots, [m_p,c_p]^M_{\gamma_p}))&=\eta(([m_1,\gamma_1(h^{-1})c_1]^M_{\gamma_1},\ldots, [m_p,\gamma_p(h^{-1})c_p]^M_{\gamma_p}))\\
&=([n_1,d_1\gamma_1(h^{-1})]^N_{\gamma_1},\ldots, [n_p,d_p\gamma_p(h^{-1})]^N_{\gamma_p})
\end{align*}

Therefore $\eta$ is $H$-equivariant and thus a principal $H$-bundle isomorphism between $M'$ and $(N')^{\xi^\vee}$. 
Applying Claim \ref{Misom} we can conclude that $M$ and $N$ are isomorphic up to a twist by $\xi^\vee.$
\end{proof}

In particular, as a consequence of the rigidity theorem, we have:
\begin{cor}\label{rigid-isom}
Given two $(\Cx)^p$-principal bundles $M$ and $N$. If their character maps are the same, i.e. $\lambda_M=\lambda_N$, then $M$ and $N$ are isomorphic as principal $(\Cx)^p$-bundles.
\end{cor}

This tells us that the character map fully characterizes  principal $(\Cx)^p$-bundles.
In fact, we can further prove that the character map classifies principal $(\Cx)^p$-bundles.


\begin{thm}
Given $H=(\Cx)^p$ and any group homomorphism $$\lambda: \widehat{H}\ra\Pic(X),$$ then there exists a unique holomorphic principal  $H$-bundle $M$ such that $\lambda=\lambda_M$.
\end{thm}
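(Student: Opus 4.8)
The plan is to treat existence and uniqueness separately, deriving each from a result already in hand.

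For existence, I would exploit that $\widehat{H}=\widehat{(\Cx)^p}\simeq\Z^p$ is free with the canonical basis $\chi_1,\ldots,\chi_p$, where $\chi_i\colon(h_1,\ldots,h_p)\mapsto h_i$. Setting $L_i:=\lambda(\chi_i)\in\Pic(X)$, the natural candidate is the Whitney sum of the complements of the zero sections,
\[
M:=L_1^\times\oplus\cdots\oplus L_p^\times .
\]
By Theorem \ref{discrete} its character map is $\lambda_M(\chi_{k_1}\cdots\chi_{k_p})\simeq k_1L_1+\cdots+k_pL_p$; evaluating at each basis vector gives $\lambda_M(\chi_i)=L_i=\lambda(\chi_i)$. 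Because $\lambda_M$ and $\lambda$ are homomorphisms out of the free group $\widehat H$ that agree on a basis, they are equal: $\lambda_M=\lambda$. It then remains to see that $M$ is genuinely a CY bundle, and this is exactly the content of the obstruction criterion (Theorem \ref{obstruction thm}): since $K_X\in\im\lambda=\im\lambda_M$ by hypothesis, $M$ admits a CY structure.

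For uniqueness, suppose $M$ and $N$ are two CY $H$-bundles with $\lambda_M=\lambda=\lambda_N$. I would apply the rigidity theorem (Theorem \ref{unique}) with the identity automorphism $\xi=\id_{\widehat H}$: the defining triangle commutes because $\lambda_N\circ\id=\lambda_N=\lambda_M$, and the induced automorphism is $\id^\vee=\id_H$. Hence $M$ is isomorphic to $N$ up to a twist by $\id_H$, i.e. $M\simeq N$ as principal $H$-bundles.

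I expect no serious obstacle, since both halves reduce to theorems already proved; the two points requiring care are bookkeeping rather than substance. First, the passage from the basis computation of Theorem \ref{discrete} to the full identity $\lambda_M=\lambda$ uses that a homomorphism on the free group $\widehat H$ is determined by its values on a basis. Second, I would make explicit what \emph{unique} means: by Theorem \ref{classify CY structures} the set of CY structures on a fixed bundle is a torsor under $\ker\lambda_M$ and is therefore generally far from a single point, so uniqueness is asserted only for the underlying principal $H$-bundle, not for the particular CY structure it carries. With these clarifications the statement follows directly.
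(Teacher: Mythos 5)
Your proposal is correct and follows essentially the same route as the paper: the Whitney sum $\lambda(\chi_1)^\times\oplus\cdots\oplus\lambda(\chi_p)^\times$ built from a basis of $\widehat H\simeq\Z^p$, Theorem \ref{discrete} to identify the character map, the obstruction criterion for the CY structure, and Theorem \ref{unique} with $\xi=\id$ for uniqueness. Your closing remark that uniqueness refers to the underlying principal bundle rather than the CY structure (which is a torsor under $\ker\lambda_M$ by Theorem \ref{classify CY structures}) is a worthwhile clarification the paper leaves implicit.
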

\begin{proof}
Since $\widehat{H}=\{\chi_{k_1}\cdots\chi_{k_p}\mid k_i\in\Z\}$, let $e_i:=\chi_0\cdots\chi_1\cdots\chi_0$, where $\chi_1$ exists only on the $i$-th component, be the standard basis.
Let $M:=(\lambda(e_1))^\times\oplus\cdots\oplus(\lambda(e_p))^\times$. Let $H$ act on $M$ by
\[(h_1,\ldots,h_p)\cdot(l_1,\ldots,l_p)=(l_1h_1^{-1},\ldots,l_ph_p^{-1}).\]
Then $M$ is a holomorphic principal $H$-bundle on $X$. From Theorem \ref{discrete} we have 
\[\lambda_M(\chi_{k_1}\cdots\chi_{k_p})=k_1\lambda(e_1)+\cdots+k_p\lambda(e_p)=\lambda(\chi_{k_1}\cdots\chi_{k_p}),\] 
i.e. $\lambda_M=\lambda$.

Now suppose there exists another holomorphic principal  $H$-bundle $N$ such that $\lambda_N=\lambda.$ 
Then apply Corollary \ref{rigid-isom} we can conclude that $M$ and $N$ are isomorphic as principal $H$-bundles.
\end{proof}

\begin{cor}
Given $H=(\Cx)^p$ and any group homomorphism $$\lambda: \widehat{H}\ra\Pic(X)$$ with $K_X$ in the image of $\lambda$, then there exists a unique CY $H$-bundle $M$ such that $\lambda=\lambda_M$.
\end{cor}

\bs
Now we can prove the first part of our main theorem:
\begin{thm}
Let $X$ be a compact complex manifold. If $\Pic(X)$ is free, then $X$ admits a CY $(\Cx)^p$-bundle whose character map is an isomorphism, and the bundle is unique up to a twist by an automorphism of $(\Cx)^p$.
\end{thm}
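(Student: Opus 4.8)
The plan is to reduce the statement to the two theorems just proved: the existence-and-uniqueness theorem producing a CY $H$-bundle that realizes a prescribed character map $\lambda$ whenever $K_X\in\im\lambda$, together with the rigidity theorem (Theorem \ref{unique}). The only genuinely new input needed is to convert the hypothesis that $\Pic(X)$ is free into the statement that it is free of \emph{finite} rank, so that $\widehat H=\Z^p$ can be matched isomorphically with $\Pic(X)$ for a finite $p$; everything else is then formal.

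First I would pin down the rank. Since $X$ is compact, $H^1(X,\cO_X)$ and $H^2(X,\Z)$ are finite-dimensional, and the exponential sequence realizes $\Pic^0(X)$ as the kernel of $c_1\colon\Pic(X)\to H^2(X,\Z)$. As a quotient of the complex vector space $H^1(X,\cO_X)$, the group $\Pic^0(X)$ is divisible, while a free abelian group contains no nonzero divisible subgroup; hence freeness forces $\Pic^0(X)=0$. Then $c_1$ embeds $\Pic(X)$ into the finitely generated group $H^2(X,\Z)$, so $\Pic(X)$ is free of finite rank. Writing $p:=\operatorname{rank}\Pic(X)$ and $H:=(\Cx)^p$, I would choose a group isomorphism
\[
\phi\colon\widehat H\ \xrightarrow{\ \sim\ }\ \Pic(X).
\]

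For existence I would simply apply the preceding theorem with $\lambda:=\phi$. Because $\phi$ is surjective, $K_X\in\Pic(X)=\im\phi$, so the hypothesis is met and the theorem yields a CY $H$-bundle $M$ with $\lambda_M=\phi$; since $\phi$ is an isomorphism, the character map of $M$ is an isomorphism, which is the asserted bundle. For uniqueness up to a twist, let $N$ be any CY $H$-bundle over $X$ with $\lambda_N$ an isomorphism (note that $N$ is automatically a $(\Cx)^p$-bundle, since an isomorphism $\widehat{(\Cx)^{p'}}\simeq\Pic(X)\simeq\Z^p$ forces $p'=p$). Then $\xi:=\lambda_N^{-1}\circ\lambda_M\in\Aut(\widehat H)$ satisfies $\lambda_M=\lambda_N\circ\xi$, so the rigidity diagram of Theorem \ref{unique} commutes, and that theorem gives $M\simeq N$ up to a twist by $\xi^\vee\in\Aut H$.

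The main obstacle is the first step: justifying that freeness of $\Pic(X)$, on a general compact complex manifold, really produces a finite-rank free group (in particular excluding a nontrivial $\Pic^0(X)$ and ruling out infinite rank). Once that structural point is secured, the remainder is a packaging of the existence theorem and the rigidity theorem, both of which carry the substantive content.
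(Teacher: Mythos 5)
Your proposal is correct and follows essentially the same route as the paper: existence comes from realizing a $\Z$-basis of $\Pic(X)$ by a bundle of the form $L_1^\times\oplus\cdots\oplus L_p^\times$ (which is exactly what the preceding existence theorem does internally when you feed it $\lambda=\phi$), and uniqueness comes from the rigidity theorem applied to $\xi=\lambda_N^{-1}\circ\lambda_M$. The one place you are more careful than the paper is the opening step: the paper asserts that compactness alone makes $\Pic(X)$ finitely generated, which is false in general (e.g.\ for a complex torus), whereas your argument --- $\Pic_0(X)$ is divisible, a free abelian group has no nonzero divisible subgroup, hence $\Pic_0(X)=0$ and $c_1$ embeds $\Pic(X)$ into the finitely generated group $H^2(X,\Z)$ --- correctly derives finite rank from the freeness hypothesis.
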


\begin{proof}Since $X$ is compact, $\Pic(X)$ is finitely generated.
Assume $\Pic(X)\simeq \Z^p.$ Then there exist line bundles $L_1,\ldots, L_p$ such that 
\[\Pic(X)\simeq \Z L_1+\cdots+\Z L_p.\]
Let $M:=L_1^{\times}\oplus L_2^{\times}\oplus \cdots \oplus L_p^{\times}$ and $H:=(\Cx)^p$, then $M$ is a principal $H$-bundle over $X$. Then by Theorem \ref{discrete} 
\[\lambda_M(\widehat{H})= \Z L_1+\cdots+\Z L_p\simeq \Pic(X),\]
meaning that the character map is an isomorphism.

If we have another bundle $H-N\ra X$ such that 
$$\lambda_N(\widehat{H})\simeq \Pic(X),$$
then there exists $\xi_1,\ldots,\xi_p\in \widehat{H}$ such that $$\lambda_N(\xi_i)\simeq L_i, \quad \forall i.$$
Since $L_1,\ldots,L_p$ is a $\Z$-basis of  $\Pic(X)$, $\{\xi_1,\ldots,\xi_p\}$ is a $\Z$-basis of $\widehat{H}$.

Let $\text{pr}_i:H\ra\Cx$ be the $i$-th projection. Then $\{\text{pr}_1,\ldots,\text{pr}_p\}$ is a $\Z$-basis of $\widehat{H}$, and $\lambda_M(\text{pr}_i)\simeq L_i.$

Define a homomorphism $\xi: \widehat{H}\ra \widehat{H}$ where on generators $\xi(\text{pr}_i):=\xi_i$ for all $i$. Since $\xi$ maps a $\Z$-basis to a $\Z$-basis, it is an isomorphism. Moreover,
\[\lambda_N\circ\xi(\text{pr}_i)=\lambda_N(\xi_i)\simeq L_i\simeq \lambda_M(\text{pr}_i),\]
for all $i$. Thus $\lambda_N\circ\xi=\lambda_M$ and by Theorem \ref{unique} $M$ and $N$ are isomorphic up to a twist by the automorphism $\xi^\vee=(\xi_1,\ldots,\xi_p)^{-1}$.
\end{proof}


\bs

\section{Character map of the universal cover}

\subsection{Realizing $\Pic_0(X)$ for K\"ahler manifolds}\label{sect-Pic0}
In this section, we describe the connected component $\Pic_0(X)$ of the Picard group of an arbitrary connected compact K\"ahler manifold $X$. Aside from a few special cases (such as curves and tori), part of the description seems to be folklore in complex geometry, and the authors have been unable to locate a source for the general case in the literature. 
 
Let $\tilde{X}$ be the universal cover of $X$. Then $\tilde{X}$ is a principal bundle $\pi_1(X)$-bundle over $X$ and we have a character map 
$$\lambda_{\tilde{X}}:\widehat{\pi_1(X)}\ra\Pic(X),\quad\gamma\mapsto L_\gamma:=\tilde{X}\times _{\pi_1(X)} \C_{\gamma}.$$

First we describe the action of $\pi_1(X)$ on $\tilde{X}\times  \C_{\gamma}$. There is a left action of $\pi_1(X)$ on $\tilde{X}$ by deck transformation $\rho\cdot\tilde{z}$ for $\rho\in\pi_1(X)$, which we shall regard as a right action where $\tilde{z}\rho^{-1}:=\rho\cdot\tilde{z}$. 
Then $\pi_1(X)$ acts on $\tilde{X}\times  \C_{\gamma}$ by
\[\rho\cdot (\tilde{z},c):=(\tilde{z}\rho^{-1}, \gamma(\rho)c)=(\rho\cdot\tilde{z}, \gamma(\rho)c)\]
for $\rho\in\pi_1(X), \tilde{z}\in\tilde{X}, c\in\C$.

In this section we are going to prove that $\Pic_0(X)$ is contained in the image of $\lambda_{\tilde{X}}$ when $X$ is compact K\"ahler.
 

\begin{lem}{\cite[p.313]{GH}}
If $X$ is a compact K\"ahler manifold, any line bundle on $X$ with Chern class $0$ can be given by constant transition functions.
\end{lem}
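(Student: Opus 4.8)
The plan is to compare the holomorphic exponential sequence with its locally constant analogue and then use Hodge theory to push a class with vanishing Chern class back into the locally constant sheaf. Recall that a line bundle has constant transition functions precisely when it lies in the image of the natural map $H^1(X,\Cx)\ra H^1(X,\cO_X^*)=\Pic(X)$ induced by the inclusion of the locally constant sheaf $\Cx$ into $\cO_X^*$. So the goal is to show $\Pic_0(X)$, the kernel of $c_1$, is contained in this image.

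First I would set up the morphism of exponential sequences coming from the inclusions of the (locally) constant sheaves into their holomorphic counterparts (in the top row $\C,\Cx$ denote the constant and locally constant sheaves):
\[
\begin{CD}
0 @>>> \Z @>>> \C @>{\exp}>> \Cx @>>> 0\\
@. @| @VVV @VVV @.\\
0 @>>> \Z @>>> \cO_X @>{\exp}>> \cO_X^* @>>> 0
\end{CD}
\]
Taking cohomology gives a commutative ladder whose relevant portion is
\[
\begin{CD}
H^1(X,\C) @>{e}>> H^1(X,\Cx) @>>> H^2(X,\Z)\\
@V{\phi}VV @V{\psi}VV @|\\
H^1(X,\cO_X) @>{E}>> \Pic(X) @>{c_1}>> H^2(X,\Z)
\end{CD}
\]
Since the bottom connecting map is the Chern class, any line bundle $L$ with $c_1(L)=0$ is of the form $L=E(\alpha)$ for some $\alpha\in H^1(X,\cO_X)$, by exactness of the bottom row.

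The key step, where the K\"ahler hypothesis enters, is to show that the left vertical map $\phi\colon H^1(X,\C)\ra H^1(X,\cO_X)$ is surjective. For this I would invoke the Hodge decomposition $H^1(X,\C)=H^{1,0}(X)\oplus H^{0,1}(X)$ together with the Dolbeault identification $H^{0,1}(X)\cong H^1(X,\cO_X)$; under these identifications $\phi$ is precisely the projection onto the $(0,1)$-summand, and is therefore onto. Hence $\alpha=\phi(\beta)$ for some $\beta\in H^1(X,\C)$.

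A short diagram chase then finishes the argument: putting $c:=e(\beta)\in H^1(X,\Cx)$, commutativity of the left square gives $\psi(c)=E(\phi(\beta))=E(\alpha)=L$. Thus $L$ lies in the image of $H^1(X,\Cx)\ra\Pic(X)$, i.e.\ it is represented by a \v{C}ech $1$-cocycle valued in the locally constant sheaf $\Cx$, which is exactly what it means to admit constant transition functions. The main obstacle is the surjectivity of $\phi$: this fails on a general compact complex manifold, and it is the single place where compactness and the K\"ahler condition (through Hodge theory) are indispensable; the rest of the proof is formal homological algebra.
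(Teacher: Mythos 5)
Your proof is correct and takes essentially the same route as the paper's: both compare the holomorphic exponential sequence with its locally constant analogue and use the K\"ahler--Hodge fact that $H^1(X,\C)\ra H^1(X,\cO)$ is the (surjective) projection onto the $(0,1)$-summand to lift a class with $c_1=0$ back to $H^1(X,\Cx)$. The only cosmetic difference is that you phrase the final step as a diagram chase, whereas the paper writes it as the chain of equalities $\Pic_0(X)=\ker c_1=\im\alpha=\im(\alpha\circ\iota_1^*)=\im(\iota_2^*\circ\beta)\subseteq\im\iota_2^*$.
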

Since we will need a description of the constant transition functions, we sketch a proof here.
\begin{proof}
Consider the inclusion of exact sheaf sequences on $X$:
\begin{displaymath}
\xymatrix{ 0 \ar[r] &\Z \ar[r] &\cO \ar[r]^{\exp} &\cO^\times \ar[r] & 0\\
                  0 \ar[r]  &\Z \ar[r] \ar@{=}[u] &\C \ar[r]  \ar[u]_{\iota_1} &\C^\times  \ar[r] \ar[u] _{\iota_2}&0.
}
\end{displaymath}
It induces a commutative diagram
\begin{displaymath}
    \xymatrix{ H^1(X,\mathcal{O})\ar[r]^{\alpha}                   & H^1(X,\mathcal{O^\times}) \ar[r]^{c_1}            & H^2(X, \Z)\ar@{=}[d]\\
               H^1(X,\C)         \ar[r]^{\beta} \ar[u]_{\iota^*_1} & H^1(X,\Cx)          \ar[r]\ar[u]_{\iota^*_2} & H^2(X, \Z) . }
\end{displaymath}
The map ${\iota^*_1}$ represents projection of $H^1(X, \C)\simeq H^{1,0}(X)\oplus H^{0,1}(X)=H^0(X, \Omega)\oplus H^1(X, \mathcal{O})$ onto the second factor, and so is surjective. Then 
\begin{equation}
\Pic_0(X)=\ker c_1=\im \alpha=\im \alpha\circ \iota^*_1=\im \iota^*_2\circ\beta\subseteq \im \iota^*_2.\label{eq-pic_0}
\end{equation}
It follows that any cocycle $\gamma\in H^1(X,\cO^\times)$ in the kernel of $c_1$ is in the image of $\iota^*_2$, i.e., is cohomologous to a cocycle with constant coefficients. Thus any line bundle on $X$ with Chern class $0$ can be given by constant transition functions.
\end{proof}


Equation (\ref{eq-pic_0}) says that the image of the map
\[\iota^*_2:H^1(X,\Cx)\ra H^1(X,\cO^\times)\]
contains $\Pic_0(X)$.
We want to compare it with the character map $\lambda_{\tilde{X}}$:
\begin{prop}\label{same-map} If $X$ is a complex manifold (not necessarily K\"ahler),
there exists an isomorphisms $\psi$ such that the following diagram
\[\xymatrix{\widehat{\pi_1(X)}\ar[r]^{\lambda_{\tilde{X}}}\ar[d]^{\psi}& \Pic(X)\ar@{=}[d]\\
H^1(X,\Cx)\ar[r]^{\iota^*_2} &H^1(X,\cO^\times)
}\]
commutes. 
\end{prop}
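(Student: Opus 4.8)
The plan is to realize the asserted isomorphism $\psi$ concretely through the monodromy/cocycle description of the universal cover, and then to match transition functions. Since $X$ is connected, first I would choose a good open cover $\{U_i\}$ of $X$ (each $U_i$ and each nonempty intersection connected and simply connected, so that the covering $\tilde{X}\to X$ admits local sections over them). Over each $U_i$ pick a holomorphic lift $s_i:U_i\to\tilde{X}$ of the covering projection. On a nonempty overlap $U_i\cap U_j$ the two lifts differ by a unique deck transformation, $s_i=\rho_{ij}\cdot s_j$ with $\rho_{ij}\in\pi_1(X)$, and connectedness of the overlap forces $\rho_{ij}$ to be constant. On triple overlaps one gets $\rho_{ik}=\rho_{ij}\rho_{jk}$, so $\{\rho_{ij}\}$ is a $\pi_1(X)$-valued $1$-cocycle representing $\tilde{X}$ as a principal $\pi_1(X)$-bundle. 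I then define
\[
\psi:\widehat{\pi_1(X)}\to H^1(X,\Cx),\qquad \psi(\gamma):=[\{\gamma(\rho_{ij})\}],
\]
the class of the constant cocycle obtained by pushing $\{\rho_{ij}\}$ through the character $\gamma$. This is the standard monodromy isomorphism: since $\C^\times$ is divisible (hence injective over $\Z$), universal coefficients give $H^1(X,\Cx)\cong\Hom(H_1(X;\Z),\C^\times)\cong\Hom(\pi_1(X),\C^\times)=\widehat{\pi_1(X)}$, and the inverse of $\psi$ is exactly the monodromy of a locally constant $\C^\times$-system.

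Next I would compute the transition functions of $L_\gamma=\tilde{X}\times_{\pi_1(X)}\C_\gamma$ in this same cover. Using the lift $s_i$, every point of $L_\gamma$ over $x\in U_i$ has a unique representative $[s_i(x),c]$, which gives a trivialization $L_\gamma|_{U_i}\xrightarrow{\sim}U_i\times\C$, $[s_i(x),c]\mapsto(x,c)$. On $U_i\cap U_j$, applying the defining relation $[\tilde{z},c]=[\rho\cdot\tilde{z},\gamma(\rho)c]$ with $\tilde{z}=s_j(x)$ and $\rho=\rho_{ij}$ yields
\[
[s_j(x),c_j]=[\rho_{ij}\cdot s_j(x),\gamma(\rho_{ij})c_j]=[s_i(x),\gamma(\rho_{ij})c_j],
\]
so the transition from the $j$- to the $i$-trivialization is multiplication by the constant $g_{ij}=\gamma(\rho_{ij})$. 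Hence $L_\gamma$ is represented in $\Pic(X)=H^1(X,\cO^\times)$ by the constant cocycle $\{\gamma(\rho_{ij})\}$.

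With both computations in hand, commutativity is immediate: $\iota^*_2$ sends the class $\psi(\gamma)=[\{\gamma(\rho_{ij})\}]\in H^1(X,\Cx)$ to the same constants viewed as a cocycle in $\cO^\times$, and by the previous paragraph this is exactly $[L_\gamma]=\lambda_{\tilde{X}}(\gamma)$. Thus $\iota^*_2\circ\psi=\lambda_{\tilde{X}}$, and $\psi$ is an isomorphism by the universal-coefficient identification. Note that no K\"ahler hypothesis enters, consistent with the statement.

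The part that needs the most care — the \emph{main obstacle} — is the bookkeeping of conventions rather than any deep input: one must track the left deck action against the right $\pi_1(X)$-action $\tilde{z}\rho^{-1}:=\rho\cdot\tilde{z}$ used to form $L_\gamma$, together with the direction of the transition functions, to be sure the resulting cocycle is $\gamma(\rho_{ij})$ and not $\gamma(\rho_{ij})^{-1}$ or $\gamma(\rho_{ij}^{-1})$; such a slip would only alter $\psi$ by the automorphism $\gamma\mapsto\gamma^{-1}$, but it must be pinned down for the diagram to commute on the nose. One should also record the standard topological facts invoked (existence of a good cover, local lifts of the covering map, the cocycle condition, and choice-independence of the class $[\{\gamma(\rho_{ij})\}]$), but these are routine for connected manifolds.
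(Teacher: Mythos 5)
Your proposal is correct and follows essentially the same route as the paper: both identify $\psi$ with the monodromy isomorphism coming from universal coefficients, realize $\psi(\gamma)$ as a constant \v{C}ech cocycle over a good cover, and then check that $L_\gamma=\tilde{X}\times_{\pi_1(X)}\C_\gamma$ has exactly those constants as transition functions. The only cosmetic difference is that you build the cocycle explicitly from local sections $s_i$ and deck transformations $\rho_{ij}$, whereas the paper invokes the classification of $G$-coverings ($\Hom(\pi_1(X),G)\simeq H^1(\cU,G)$) from Fulton and observes that $Y_\rho$ and $L_\rho$ share the same transition functions.
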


\begin{proof}
The Universal Coefficient Theorem says
\[H^1(X,\Cx)\simeq\Hom (H_1(X,\Z),\Cx)\oplus \Ext(H_0(X,\Z),\Cx).\]
Since $\Cx$ is abelian and $H_1(X,\Z)$ is the abelianization of $\pi_1(X)$,
\[\widehat{\pi_1(X)}\simeq\Hom (H_1(X,\Z),\Cx).\]
Since $H_0(X,\Z)$ is free, $\Ext(H_0(X,\Z),\Cx)=0$. Therefore we have
\[H^1(X,\Cx)\simeq\widehat{\pi_1(X)}.\]

Next, we want to describe this isomorphism $\psi$ in detail. We will use the notion of $G$-coverings which can be found in \cite[p.160]{Fu}. Let $G$ be a group. A covering $p: Y\ra X$ is called a $G$-covering if it arises from a properly discontinuous action of $G$ on $Y$. 
Since $X$ is a manifold, we can find an open cover $\cU=\{U_\alpha\}$ where $U_\alpha$ are connected and contractible. If $G$ is abelian, there are isomorphisms of abelian groups:
\begin{equation}\label{G-coverings}
\Hom(\pi_1(X), G)\simeq \{G\text{-coverings of $X$\}/isomorphism}\simeq H^1(\cU, G).
\end{equation}
The first isomorphism is given by $\rho\mapsto [Y_\rho]$ where $Y_\rho:=\tilde X\times_{\pi_1(X)} G$.
Here $\pi_1(X)$ acts on $\tilde{X}\times G$ by
$$\sigma(\tilde{z}, g)=(\tilde{z}\sigma^{-1}, \rho(\sigma)\cdot g) \text{ for }\rho\in \pi_1(X),$$
and $G$ acts on $Y_\rho$ by
$$g'\cdot[(\tilde{z}, g)]=[(\tilde{z}, g\cdot (g')^{-1})].$$
Note that our convention differs from that of \cite{Fu} in that the left and right group actions are switched. But the results there carry over.
We now specialize to $G=\Cx$. 
Let $\rho\in \widehat{\pi_1(X)}$. 
Since $U_\alpha$ is contractible, $Y_\rho|_{U_\alpha}$  is a trivial $\Cx$-covering. Then under \eqref{G-coverings}, the image of $[Y_\rho]$ in $H^1(\cU, \Cx)$ must be a collection of associated transition functions $\{g_{\alpha\beta}\}_{\alpha,\beta}$, where the $g_{\alpha\beta}:U_\alpha\cap U_\beta\ra\Cx$ are locally constant functions.

Since each $U_\alpha$ is a contractible open set, we have $H^1(U_\alpha, \Cx)=0$. Therefore, we have a canonical isomorphism $H^1(X,\Cx)\simeq H^1(\cU,\Cx)$ and we can regard $\{g_{\alpha\beta}\}_{\alpha,\beta}$ as an element in $H^1(X,\Cx)$.
Define $$\psi: \widehat{\pi_1(X)}\ra H^1(X,\Cx), \rho\mapsto \{g_{\alpha\beta}\}_{\alpha,\beta},$$
then $\psi$ is an isomorphism.

Moreover, since $\iota_2$ is an inclusion, $\iota^*_2(\{g_{\alpha\beta}\}_{\alpha,\beta})=\{\iota_2(g_{\alpha\beta})\}_{\alpha,\beta}=\{g_{\alpha\beta}\}_{\alpha,\beta}$. Therefore \[\iota^*_2\circ\psi(\rho)=\{g_{\alpha\beta}\}_{\alpha,\beta}.\]

\bs

Now we consider 
$$\lambda_{\tilde{X}}(\rho)=L_\rho:=\tilde{X}\times _{\pi_1(X)} \C_{\rho}$$
where $[\tilde{z},c]=[\sigma\cdot\tilde{z}, \rho(\sigma)c]$. 
$\cU$ defined above is a local trivialization of $L_\rho$, and from the way we define $Y_\rho$ and $L_\rho$ we can see that they have the same transition functions. Therefore 
\[\lambda_{\tilde{X}}(\rho)=\{g_{\alpha\beta}\}_{\alpha,\beta}=\iota^*_2\circ\psi(\rho),\]
which means that the diagram in the proposition commutes, as desired.
\end{proof}

An immediate result of this proposition is the following:
\begin{cor}\label{same-map2} 
If $X$ is a compact K\"ahler manifold, $\Pic_0(X)$ is contained in the image of $\lambda_{\tilde{X}}$. That is, given a line bundle $L\in\Pic_0(X)$, there exists a character $\gamma\in \widehat{\pi_1(X)}$ such that $L\simeq \tilde{X}\times _{\pi_1(X)} \C_{\gamma}$.
\end{cor}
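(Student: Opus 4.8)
The plan is to read the inclusion off directly from the two facts already in hand: the containment $\Pic_0(X)\subseteq\im\iota_2^*$ recorded in equation (\ref{eq-pic_0}), and the commutativity of the square in Proposition \ref{same-map} together with the fact that the vertical map $\psi$ there is an isomorphism. In other words, all the real work has already been done, and the corollary is a one-line diagram chase once the identifications are in place.

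First I would observe that, under the identification $\Pic(X)=H^1(X,\cO^\times)$ appearing on the right edge of the diagram in Proposition \ref{same-map}, that proposition asserts the equality of maps $\lambda_{\tilde{X}}=\iota_2^*\circ\psi$ on $\widehat{\pi_1(X)}$. Since $\psi:\widehat{\pi_1(X)}\ra H^1(X,\Cx)$ is an isomorphism, in particular surjective, passing to images gives
\[
\im\lambda_{\tilde{X}}=\iota_2^*\big(\psi(\widehat{\pi_1(X)})\big)=\iota_2^*\big(H^1(X,\Cx)\big)=\im\iota_2^*.
\]

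Next I would invoke (\ref{eq-pic_0}), which records $\Pic_0(X)\subseteq\im\iota_2^*$. Combining this with the previous display yields $\Pic_0(X)\subseteq\im\lambda_{\tilde{X}}$. Unwinding the definition of the image of $\lambda_{\tilde{X}}$ then gives exactly the stated conclusion: for any $L\in\Pic_0(X)$ there is a character $\gamma\in\widehat{\pi_1(X)}$ with $L\simeq\lambda_{\tilde{X}}(\gamma)=\tilde{X}\times_{\pi_1(X)}\C_{\gamma}$.

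I do not expect any genuine obstacle in this step. The analytic and topological content has already been absorbed into the preceding Lemma (Chern-class-zero bundles admit constant transition functions, which is what produces $\Pic_0(X)\subseteq\im\iota_2^*$) and into Proposition \ref{same-map} (the identification of $\lambda_{\tilde{X}}$ with $\iota_2^*$ through the isomorphism $\psi$, via the comparison of transition functions of $Y_\rho$ and $L_\rho$). The only point demanding care is bookkeeping of the two identifications $\Pic(X)=H^1(X,\cO^\times)$ and $\widehat{\pi_1(X)}\simeq H^1(X,\Cx)$, so that the images $\im\lambda_{\tilde{X}}$ and $\im\iota_2^*$ are literally being compared as subgroups of the same group $H^1(X,\cO^\times)$; once this is confirmed, the inclusion is immediate.
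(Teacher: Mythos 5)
Your proposal is correct and matches the paper exactly: the paper presents this corollary as an ``immediate result'' of Proposition \ref{same-map} combined with the containment $\Pic_0(X)\subseteq\im\iota_2^*$ from (\ref{eq-pic_0}), which is precisely the diagram chase you spell out. The only thing you add is making explicit that surjectivity of $\psi$ gives $\im\lambda_{\tilde{X}}=\im\iota_2^*$, which is the intended (and correct) reading.
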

{
\begin{rem}
There is another way to prove Corollary \ref{same-map2} by giving an explicit description of the character $ \gamma$ by {\it factors of automorphy} (cf. \cite[Appendix B]{LB}) under the assumption that $X$ is  compact K\"ahler. It  is a generalization of the case of complex tori in \cite[p.313,308]{GH}.
\end{rem}
}

\begin{exm}\label{exm-torus}
Let $X$ be a complex torus of dimension $g$. Then $\pi_1(X)\simeq \Z^{2g}$.
Corollary \ref{same-map2} tells us that the image of the character map
\[\lambda_{\tilde{X}}: \widehat{\pi_1(X)}\simeq (\Cx)^{2g}\ra \Pic(X)\]
contains $\Pic_0(X)\simeq \C^g/\Z^{2g}$. 
And it is clear from a dimension argument that this map has a kernel. We will describe the kernel in Subsection \ref{sec-kernel}.
\end{exm}

\subsection{More about the character map $\lambda_{\tilde{X}}$}
In this section we are going to discuss two important facts about $\lambda_{\tilde{X}}$.

\bs
Consider the following commutative diagram induced by inclusion of  the exponential sheaf sequences:
\begin{equation}\label{exact sequences}
\xymatrix{H^1(X,\cO)\ar[r]^\alpha &H^1(X,\cO^\times)\ar[r]^{c_1} &H^2(X,\Z)\ar[r] &H^2(X,\cO)\\
H^1(X,\C)\ar[r]^\beta\ar[u]_{\iota^*_1} &H^1(X,\Cx)\ar[r]^\gamma\ar[u]_{\iota^*_2} &H^2(X,\Z)\ar[r]^\epsilon\ar@{=}[u] &H^2(X,\C).\ar[u]}
\end{equation}
Since $X$ is compact, $H_i(X):=H_i(X,\Z)$ is a finitely generated abelian group for all $i$.
We denote:
\[q:=\text{rank}(H_1(X)),\; T_1:= H_1(X)_{\text{tor}},\; b:=\text{rank}(H_2(X)),\;  T_2:= H_2(X)_{\text{tor}}\]
where $q,b\in\Z $ and $T_1,T_2$ are finite groups.
Then we have isomorphisms:
\[H_1(X)\simeq \Z^q\oplus T_1,\quad H_2(X)\simeq \Z^b\oplus T_2.\]

The Universal Coefficient Theorem tells us that
\begin{align*}
&H^1(X,\C)\simeq\Hom(H_1(X),\C)\oplus  \Ext(H_0(X),\C)\simeq \C^q;\\
&H^1(X,\Cx)\simeq \Hom(H_1(X),\Cx)\oplus \Ext(H_0(X),\Cx)\simeq  (\Cx)^q\oplus T_1;\\
&H^2(X,\Z)\simeq \Hom(H_2(X),\Z)\oplus\Ext(H_1(X),\Z)\simeq \Z^b\oplus T_1;\\
&H^2(X,\C\simeq\Hom(H_2(X),\C)\oplus\Ext(H_1(X),\C)\simeq \C^b.
\end{align*}

So we can rewrite the bottom exact sequence in \eqref{exact sequences} as:
\[\C^q\xrightarrow{\beta}(\Cx)^q\oplus T_1\xra{\gamma} \Z^b\oplus T_1\xra{\epsilon}\C^b.\]

We first look at $\epsilon$. Since $\C$ is free, $\Hom(T_1,\C^b)=0$. And since the functor $\Hom(H_2(X),-)$ is left exact, $\epsilon$ restricted to $\Z^b$ is an inclusion. Thus $\im \gamma=\ker \epsilon\simeq T_1$.

We claim that $\Hom(\Cx, T_1)=0.$ Let $N$ be an integer such that $Nt=0$ for all $t\in T_1.$ Since $\Cx$ is divisible, for any $a\in\Cx$, there exists some 
$b\in\Cx$ such that $a=b^N.$
Given any $f\in \Hom(\Cx, T_1)$, $f(a)=f(b^N)=Nf(b)=0.$ Thus $\Hom(\Cx, T_1)=0.$
So $\gamma$ restricted to $(\Cx)^q$ maps to $0$.
Therefore $\gamma$ restricted to $T_1$ is an isomorphism to the torsion part of $H^2(X,\Z).$

Proposition \ref{same-map} tells us that 
$\lambda_{\tilde{X}}=\iota^*_2\circ\psi$
and $\psi$ is an isomorphism.
Thus 
$$\im c_1\circ\lambda_{\tilde{X}}=\im c_1\circ \iota^*_2 =\im \gamma=H^2(X,\Z)_{\text{tor}}.$$ 

Also natural inclusions $\im c_1\circ \iota^*_2 \subset \im c_1\subset H^2(X,\Z)$ imply that $$H^2(X,\Z)_{\text{tor}}=(\im c_1\circ \iota^*_2 )_{\text{tor}}\subset(\im c_1)_{\text{tor}}\subset H^2(X,\Z)_{\text{tor}}.$$
It forces $c_1(\Pic(X))_{\text{tor}}= H^2(X,\Z)_{\text{tor}}$ and $c_1\circ\lambda_{\tilde{X}}$ maps onto the torsion of $c_1(\Pic(X))$. 

To be more precise, we look at the character group $\Hom(\pi_1(X),\Cx)$:
\begin{align*}
\Hom(\pi_1(X),\Cx)&\simeq \Hom(\overline{\pi_1(X)},\Cx)\simeq \Hom(\Z^q\oplus T_1,\Cx)\\
&\simeq \Hom(\Z^q,\Cx)\oplus \Hom(T_1,\Cx)\simeq (\Cx)^q\oplus T_1
\end{align*}
where $\overline{\pi_1(X)}$ denotes the abelianization of $\pi_1(X)$.
Then we can see further that $c_1\circ\lambda_{\tilde{X}}$ restricted on $\Hom(T_1,\Cx)$ gives an isomorphism to $c_1(\Pic(X))_{\text{tor}}$.

To sum up, we have:
\begin{prop}\label{torsion}
If $X$ is a compact complex manifold (not necessarily K\"ahler), then $c_1\circ \lambda_{\tilde{X}}$ maps onto the torsion of $c_1(\Pic(X))$. And $c_1\circ\lambda_{\tilde{X}}$ restricted on $\Hom(T_1,\Cx)$ gives an isomorphism to $c_1(\Pic(X))_{\text{tor}}$.
\end{prop}

From the above discussion we can also conclude that $\im \beta=\ker \gamma=(\Cx)^q.$

If we assume that $X$ is K\"ahler, we have
\[\iota^*_2((\Cx)^q)\simeq \im \iota^*_2\circ \beta=\im \alpha\circ \iota^*_1=\Pic_0(X),\]
where the last equality comes from (\ref{eq-pic_0}).

Using the isomorphism $\psi$ between $\widehat{\pi_1(X)}$ and $H^1(X,\Cx)$ again we can conclude that 
\[\lambda_{\tilde{X}}(\Hom(\Z^q,\Cx))=\Pic_0(X).\]

To sum up, we have:
\begin{prop}\label{free-to-pic_0}
Let $X$ be a compact K\"ahler manifold, 
then the restriction of the character map $\lambda_{\tilde{X}}$ to $\Hom(\Z^q,\Cx)$ maps exactly onto $\Pic_0(X).$
\end{prop}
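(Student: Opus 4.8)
The plan is to assemble three facts already established in this section: the factorization $\lambda_{\tilde{X}} = \iota^*_2 \circ \psi$ with $\psi$ an isomorphism (Proposition \ref{same-map}), the equality $\Pic_0(X) = \im(\iota^*_2 \circ \beta)$ coming from (\ref{eq-pic_0}), and the universal coefficient identifications of $H^1(X,\Z)$, $H^1(X,\C)$, $H^1(X,\Cx)$ recorded just above. The heart of the argument is to pin down the image of $\beta$ \emph{exactly} as the free summand of $H^1(X,\Cx)$, and then to transport this through $\psi$ back to the character group $\widehat{\pi_1(X)}$.

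First I would rewrite the bottom row of the first square in (\ref{exact sequences}) under the universal coefficient identifications, turning it into
\[
\Hom(\Z^a,\Z)\xrightarrow{\sigma}\Hom(\Z^a,\C)\xrightarrow{\beta}\Hom(\Z^a,\Cx)\oplus T_1 .
\]
Since $\Z^a$ is free, $\Hom(\Z^a,-)$ is exact, so applying it to the coefficient sequence $0\to\Z\to\C\to\Cx\to0$ shows that $\beta$ has image exactly the free summand $\Hom(\Z^a,\Cx)$ and misses the torsion factor $T_1$ entirely. This is where I expect the only genuine subtlety: one must check that the universal-coefficient splitting is compatible with the maps induced by $\Z\hookrightarrow\C\twoheadrightarrow\Cx$ on coefficients, so that $\im\beta$ is this clean summand rather than a twisted copy straddling both factors. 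Exactness of $\Hom(\Z^a,-)$ is precisely the tool that rules this out.

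Next I would push this forward by $\iota^*_2$. By (\ref{eq-pic_0}) we already know $\Pic_0(X) = \im(\iota^*_2\circ\beta)$, and combining this with $\im\beta = \Hom(\Z^a,\Cx)$ yields $\iota^*_2(\Hom(\Z^a,\Cx)) = \Pic_0(X)$. This is exactly the point at which the K\"ahler hypothesis is consumed, since (\ref{eq-pic_0}) rested on the Hodge-theoretic surjectivity of $\iota^*_1$ coming from the decomposition $H^1(X,\C)\simeq H^{1,0}(X)\oplus H^{0,1}(X)$; without the K\"ahler assumption the constant-coefficient cocycles need not surject onto $\Pic_0(X)$, and the proposition can fail.

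Finally I would transport back to characters. Because $\Cx$ is abelian we have $\widehat{\pi_1(X)} = \Hom(\overline{\pi_1(X)},\Cx)$, and under the universal-coefficient identification $\Z^a\simeq H_1(X)_{\text{free}}\simeq\overline{\pi_1(X)}_{\text{free}}$ the subgroup $\Hom(\overline{\pi_1(X)}_{\text{free}},\Cx)$ corresponds precisely to $\Hom(\Z^a,\Cx)\subseteq H^1(X,\Cx)$ under $\psi$. Feeding this into $\lambda_{\tilde{X}} = \iota^*_2\circ\psi$ then gives
\[
\lambda_{\tilde{X}}\bigl(\Hom(\overline{\pi_1(X)}_{\text{free}},\Cx)\bigr) = \iota^*_2\bigl(\Hom(\Z^a,\Cx)\bigr) = \Pic_0(X),
\]
which is the desired statement.
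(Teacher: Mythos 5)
Your proposal is correct and follows essentially the same route as the paper's own argument: rewrite the bottom row of diagram (\ref{exact sequences}) via universal coefficients, use exactness of $\Hom(\Z^a,-)$ to identify $\im\beta$ with the free summand $\Hom(\Z^a,\Cx)$, combine with (\ref{eq-pic_0}) to get $\iota^*_2(\Hom(\Z^a,\Cx))=\Pic_0(X)$, and transport back through $\psi$. Your added remarks on where the K\"ahler hypothesis enters and on the compatibility of the universal-coefficient splitting are accurate but do not change the substance of the argument.
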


\subsection{Further description of $\Pic_0(X)$ and kernel of $\lambda_{\tilde{X}}$}\label{sec-kernel}
The character map $\lambda_{\tilde{X}}$ is usually not injective.

Let $X$ be a compact K\"ahler manifold. First we want to give a more detailed description of $\Pic_0(X)$.
Consider the following commutative diagram:
\[\xymatrix{ 0\ar[r] &\Z\ar[r]\ar@{=}[d]&\R\ar[r]^{\exp}\ar@{^(->}[d]_{i_1}&S^1\ar@{^(->}[d]_{i_2}\ar[r]&0\\
                   0\ar[r] &\Z\ar[r]\ar@{=}[d]&\C\ar[r]^{\exp} \ar@{^(->}[d]_{\iota_1}&\Cx\ar[r]\ar@{^(->}[d]_{\iota_2}&0\\
                 0\ar[r] &\Z\ar[r] &\cO \ar[r]^{\exp} & \cO^\times\ar[r]&0
}\]
where $S^1$ denotes the unit circle. The horizontal lines are exact (but vertical lines are not.)

It induces a diagram of long exact sequences:
\[\xymatrix{ H^0(X,S^1)\ar[r]^{\delta_1}\ar[d]& H^1(X,\Z)\ar[r]\ar@{=}[d] & H^1(X,\R)\ar[r]^{\exp^*_1}\ar[d]_{i^*_1} &H^1(X,S^1)\ar[r]\ar[d]_{i^*_2} &H^2(X,\Z)\ar@{=}[d] \\
H^0(X,\Cx)\ar[r]^{\delta_2}\ar[d]& H^1(X,\Z)\ar[r]\ar@{=}[d] & H^1(X,\C)\ar[d]_{\iota^*_1} \ar[r]^{\exp^*_2}&H^1(X,\Cx)\ar[r]\ar[d]_{\iota^*_2} &H^2(X,\Z)\ar@{=}[d] \\
H^0(X,\cO^\times)\ar[r]^{\delta_3}& H^1(X,\Z)\ar[r] & H^1(X,\cO)\ar[r]^{\exp^*_3} &H^1(X,\cO^\times)\ar[r]^{c_1} &H^2(X,\Z) \\
}\]
where the rows are exact but not necessarily the columns.
By universal coefficient theorem and that $\Hom(\Z,-)$ is exact, we see that $\delta_1,\delta_2,\delta_3 $ are inclusions.

From the universal coefficient theorem it is also clear that $$\im (\exp^*_1)\simeq \Hom(\Z^q, S^1), \quad \im (\exp^*_2)\simeq \Hom(\Z^q, \Cx)$$
where $\Z^q\simeq H_1(X)_{\text{free}}$. Since $\im(\exp^*_3)=\Pic_0(X)$, we can rewrite the middle part of the sequences as

\[\xymatrix{ 0\ar[r]& H^1(X,\Z)\ar[r]\ar@{=}[d] & H^1(X,\R)\ar[r]^{\exp^*_1\quad}\ar[d]_{i^*_1} &\Hom(\Z^q, S^1)\ar[r]\ar[d]_{i^*_2} &0 \\
0\ar[r]& H^1(X,\Z)\ar[r]\ar@{=}[d] & H^1(X,\C)\ar[d]_{\iota^*_1} \ar[r]^{\exp^*_2\quad}&\Hom(\Z^q, \Cx)\ar[r]\ar[d]_{\lambda_{\tilde{X}}} &0 \\
0\ar[r]& H^1(X,\Z)\ar[r] & H^1(X,\cO)\ar[r]^{\exp^*_3} &\Pic_0(X)\ar[r] &0 \\
}\]

\begin{claim}If $X$ is compact K\"ahler,
$\iota^*_1\circ i^*_1$ is an isomorphism between $H^1(X,\R)$ and $H^1(X,\cO)$ as real vector spaces.
\end{claim}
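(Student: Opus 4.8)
The plan is to exploit the Hodge decomposition provided by the K\"ahler hypothesis and to track how the two maps $i_1^*$ and $\iota_1^*$ interact with complex conjugation on $H^1(X,\C)$. The key structural input is that $\iota_1^*$ is a projection and $i_1^*$ is the inclusion of a conjugation-fixed real subspace; composing them should pick out one Hodge summand.

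First I would record the Hodge data. For compact K\"ahler $X$, Hodge theory gives
\[
H^1(X,\C)\simeq H^{1,0}(X)\oplus H^{0,1}(X),\qquad \overline{H^{1,0}(X)}=H^{0,1}(X),
\]
where $H^{0,1}(X)\simeq H^1(X,\cO)$ via Dolbeault. As already observed in Section \ref{sect-Pic0}, under this decomposition the map $\iota_1^*\colon H^1(X,\C)\ra H^1(X,\cO)$ is precisely the projection onto the second summand $H^{0,1}(X)$. Next I would identify the image of $i_1^*$. Since $X$ is connected and compact, the inclusion $\R\hookrightarrow\C$ of constant sheaves induces $H^1(X,\R)\otimes_\R\C\simeq H^1(X,\C)$, so $i_1^*$ embeds $H^1(X,\R)$ as the fixed locus of the conjugation $c$ on $H^1(X,\C)$ coming from the real structure. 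Writing a class as $\xi=a+b$ with $a\in H^{1,0}(X)$ and $b\in H^{0,1}(X)$, the symmetry $\overline{H^{1,0}}=H^{0,1}$ gives $c(\xi)=\overline b+\overline a$ with $\overline b\in H^{1,0}$ and $\overline a\in H^{0,1}$, so $\xi$ is real if and only if $b=\overline a$. Hence
\[
i_1^*\big(H^1(X,\R)\big)=\{\,a+\overline a \mid a\in H^{1,0}(X)\,\}.
\]

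Combining the two descriptions, the composition $\iota_1^*\circ i_1^*$ sends $a+\overline a\mapsto \overline a\in H^{0,1}(X)=H^1(X,\cO)$. This map is injective, since $\overline a=0$ forces $a=0$; and it is surjective, since for any $\eta\in H^1(X,\cO)$ the real class $\eta+\overline\eta$ maps to $\eta$. Therefore $\iota_1^*\circ i_1^*$ is a bijective $\R$-linear map, i.e. an isomorphism of real vector spaces. Alternatively one can finish by a dimension count: by Hodge symmetry $\dim_\R H^1(X,\R)=b_1=h^{1,0}+h^{0,1}=2h^{0,1}=\dim_\R H^1(X,\cO)$, which combined with injectivity yields the isomorphism.

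The main obstacle is entirely in the Hodge-theoretic bookkeeping, specifically in invoking the conjugation symmetry $\overline{H^{1,0}}=H^{0,1}$, which is exactly where the K\"ahler assumption is used; once this identification is in place the injectivity and surjectivity are immediate. One should also note that the resulting map is only $\R$-linear — there is no natural $\C$-structure on $H^1(X,\R)$ — which is precisely why the statement is phrased for real vector spaces rather than complex ones.
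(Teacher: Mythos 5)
Your proof is correct and follows essentially the same route as the paper: both rest on the Hodge decomposition $H^1(X,\C)=H^{1,0}\oplus H^{0,1}$ with $\iota_1^*$ the projection onto $H^{0,1}$ and the conjugation symmetry forcing real classes to have the form $a+\overline{a}$, which is exactly what makes $H^1(X,\R)$ meet $\ker\iota_1^*$ trivially. The only cosmetic difference is that you verify surjectivity explicitly via $\eta+\overline{\eta}\mapsto\eta$, while the paper concludes from injectivity plus the equality of real dimensions $b_1=2h^{0,1}$ --- an argument you also supply as an alternative.
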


\begin{proof}
Since we assume $X$ to be K\"ahler, $H^1(X,\C)=H^1(X,\cO)\oplus\overline{H^1(X,\cO)}$. For
$H^1(X,\R)\subset H^1(X,\C)$ is real, $$H^1(X,\R)\cap H^1(X,\cO)=H^1(X,\R)\cap \overline{H^1(X,\cO)}=0.$$

Since $\iota^*_1$ is the projection from the Hodge decomposition, $\ker(\iota^*_1)=\overline{H^1(X,\cO)}$. It follows that $\ker(\iota^*_1)\cap H^1(X,\R)=0$. Since $i^*_1$ is an inclusion, $\ker(\iota^*_1\circ i^*_1)\cap  H^1(X,\R)=0$. Thus $\iota^*_1\circ i^*_1$ is injective.
Since $H^1(X,\R)$ and $H^1(X,\cO)$ have the same dimension as real vector spaces, $\iota^*_1\circ i^*_1$ is an isomorphism.
\end{proof}

From the universal coefficient theorem it is clear that $i^*_2$ is the natural inclusion. Now the first and third rows become
\[\xymatrix{ 0\ar[r]& H^1(X,\Z)\ar[r]\ar@{=}[d] & H^1(X,\R)\ar[r]^{\exp^*_1\quad}\ar[d]_{\iota^*_1\circ i^*_1} &\Hom(\Z^q, S^1)\ar[r]\ar[d]_{\lambda_{\tilde{X}}\circ i^*_2} &0 \\
0\ar[r]& H^1(X,\Z)\ar[r] & H^1(X,\cO)\ar[r]^{\exp^*_3} &\Pic_0(X)\ar[r] &0.\\
}\]
Finally, the five-lemma yields:
\begin{prop}\label{thm pic0} If $X$ is compact K\"ahler,
$\lambda_{\tilde{X}}\circ i^*_2$ restricted to $\Hom(\Z^q, S^1)\subset H^1(X,S^1)$ gives an isomorphism to $\Pic_0(X)$ as real tori.
\end{prop}

\bs

Now we want to describe the kernel of $\lambda_{\tilde{X}}$. 
Proposition \ref{same-map} tells us that $\ker(\lambda_{\tilde{X}})\simeq\ker (\iota^*_2)$. We consider the following exact sheaf sequences\footnote{We thank Prof. Alan Mayer for pointing out this diagram to us.}:
\[\xymatrix{ &0\ar[d]&0\ar[d]\\
                   0\ar[r] &\Z\ar[r]\ar[d]_i&\Z\ar[r]\ar[d]_i&0\ar[d]\\
                   0\ar[r] &\C\ar[r]^{\iota_1} \ar[d]_{\exp}&\cO \ar[r]^d \ar[d]_{\exp}&\Omega^1_c\ar[r]\ar[d]&0\\
                 0\ar[r] &\Cx\ar[d]\ar[r]^{\iota_2} &\cO^\times \ar[d]\ar[r]^{d\log} & \Omega^1_c\ar[r]\ar[d]&0\\
                 &0&0&0
}\]
where $\Omega^1_c$ is the sheaf of holomorphic closed $1$-forms on $X$.
Then we have a long exact sequence:
\[0\ra H^0(X, \Cx)\ra H^0(X, \cO^\times )\ra H^0(X, \Omega^1_c)\xrightarrow{\delta} H^1(X, \Cx)\xrightarrow{\iota^*_2} H^1(X, \cO^\times)\ra\cdots\]
Since $X$ is compact, $H^0(X, \cO^\times )\simeq\Cx\simeq H^0(X, \Cx)$. Thus $\delta$ is an injection. Thus $\ker (\iota^*_2)=\delta(H^0(X, \Omega^1_c))\simeq H^0(X, \Omega^1_c)$.
Therefore we have:
\begin{prop}
If $X$ is a compact K\"ahler manifold, then $\ker(\lambda_{\tilde{X}})\simeq H^0(X, \Omega^1_c)$.
\end{prop}



Proposition \ref{torsion} and Proposition \ref{free-to-pic_0} tells us that there is a short exact sequence
\[0\ra \ker \lambda_{\tilde{X}}\ra \Hom (\Z^q,\Cx)\xrightarrow{\lambda_{\tilde{X}}}  \Pic_0(X)\ra 0.\]
Proposition \ref{thm pic0} tells us that $\lambda_{\tilde{X}}$ restricted to $\Hom (\Z^q, S^1)$ is an isomorphism to $\Pic_0(X)$. 
Therefore the inverse of $\lambda_{\tilde{X}}\circ i^*_2$ splits this sequence of groups, and  
so we have
\[\Hom(\Z^q,\Cx)\simeq  \ker \lambda_{\tilde{X}}\oplus \Pic_0(X)\]
(Note that this splitting is not holomorphic.) 

\bs

\begin{exm}
Let $X$ be a compact Riemann surface with genus $g$. Then every homomorphic $1$-form on $X$ is closed, i.e. $\Omega^1_c=\Omega^1$. We know that $H^0(X, \Omega^1)\simeq \C^g$, thus in this case $\Hom(\pi_1(X),\Cx)\simeq (\Cx)^{2g}$, $\Pic_0(X)\simeq \C^g/\Z^{2g}$ and $\ker(\lambda_{\tilde{X}})\simeq\C^g.$
\end{exm}

\bs

\section{Existence of CY bundles on K\"ahler manifolds}\label{sect-Pic}
Let $X$ be a connected compact complex manifold, then $H^2(X, \Z)$ is a finitely generated abelian group. 
As a subgroup of $H^2(X,\Z)$, $c_1(\Pic(X))$ is a finitely generated abelian group as well.

Let $p$ be the rank of $c_1(\Pic(X))$ and $L_1,\cdots,L_p$ be holomorphic line bundles on $X$ such that $\{c_1(L_1), \cdots, c_1(L_p)\}$ generate the free part of $c_1(\Pic(X))$. Let $M:=L_1^{\times}\oplus L_2^{\times}\oplus \cdots \oplus L_p^{\times}$, then $M$ is a principal $(\Cx)^p$-bundle over $X$. Let
$\tilde{M}:=\tilde{X}\oplus M$ be the Whitney sum of the universal cover $\tilde X$ and $M$  over $X$. 
Let $\tilde{H}:=\pi_1(X)\times(\Cx)^p$. Then $\tilde{M}$ is a principal $\tilde{H}$-bundle over $X$.


\begin{prop}Let $X$ be  compact K\"ahler,
then the character map
\[\lambda_{\tilde{M}}:\widehat{\tilde{H}}\ra\Pic(X),\quad \chi\mapsto \tilde{M}\times_{\tilde{H}}\C_{\chi}\]
is surjective.
\end{prop}

\begin{proof}
We are going to show that given any line bundle $L$ on $X$, there exists an element $\chi$ in $\widehat{\tilde{H}}$ such that $\tilde{M}\times_{\tilde{H}}\C_{\chi}=L$.

First note that
\[\widehat{\tilde{H}}\simeq\widehat{\pi_1(X)}\times \widehat{(\Cx)^p}\]
By assumption on $L_1,\ldots,L_p$, there exist integers $k_1,\ldots,k_p$ such that $$\tilde{\sigma}:=c_1(L)-(k_1c_1(L_1)+\cdots+k_pc_1(L_p))\in c_1(\Pic(X))_{\text{tor}}.$$
By Proposition \ref{torsion} there exists $\rho\in \widehat{\pi_1(X)}$ such that $$c_1( \tilde{X}\times_{\pi_1(X)}\C_{\rho})=\tilde{\sigma},$$
i.e.,
\[c_1( \tilde{X}\times_{\pi_1(X)}\C_{\rho})=c_1(L-(k_1L_1+\cdots+k_pL_p))\]

 Then
$$L_0:=L-(k_1L_1+\cdots+k_pL_p)- \tilde{X}\times_{\pi_1(X)}\C_{\rho}\in\Pic_0(X).$$

By Corollary \ref{same-map2} there exists $\gamma\in \widehat{ \pi_1(X)}$ such that
$$L_0\simeq \tilde{X}\times_{\pi_1(X)}\C_{\gamma}.$$
Then we have
\[\tilde{M}\times_{\tilde{H}}\C_{(\gamma\cdot \rho,  \chi_0\cdots \chi_0 )}\simeq L_0+\tilde{X}\times_{\pi_1(X)}\C_{\rho}\]
where $\chi_0\cdots\chi_0$ is the trivial character in $\widehat{(\Cx)^p}$.

Theorem \ref{discrete} tells us that
$$M\times_{(\Cx)^p}\C_{\chi_{k_1}\cdots\chi_{k_p}}\simeq k_1L_1+\cdots+k_pL_p,$$
then
\[\tilde{M}\times_{\tilde{H}}\C_{(e,\chi_{k_1}\cdots\chi_{k_p})}\simeq k_1L_1+\cdots+k_pL_p\]
where $e$ denotes the trivial character $e:\pi_1(X)\ra\Cx, \rho\ra 1, \forall \rho\in \pi_1(X)$.

Since
\[\widehat{\tilde{H}}\ra Pic(X), \quad \chi\mapsto \tilde{M}\times_{\tilde{H}} \C_{\chi}\]
is a group homomorphism, and
\[(e,\chi_{k_1}\cdots\chi_{k_p})\cdot(\gamma\cdot \rho, \chi_0\cdots \chi_0)=(\gamma\cdot \rho, \chi_{k_1}\cdots\chi_{k_p}),\]
we have
\[\tilde{M}\times_{\tilde{H}}\C_{(\gamma\cdot \rho,\chi_{k_1}\cdots\chi_{k_p})}\simeq L_0+\tilde{X}\times_{\pi_1(X)}\C_{\rho}+k_1L_1+\cdots+k_pL_p=L.\]
I.e., $\tilde{M}\times_{\tilde{H}}\C_\chi\simeq L$ where $\chi=(\gamma\cdot \rho,\chi_{k_1}\cdots\chi_{k_p})$.
\end{proof}

In particular, since $K_X\in \Pic(X)$, we conclude that:
\begin{thm}\label{kahler onto}
If $X$ is compact K\"ahler, then $\tilde{H}-\tilde{M}\ra X$ constructed as above is a CY bundle whose character map is onto. And $\ker\lambda_{\tilde{M}}\simeq H^0(X, \Omega^1_c)$.
\end{thm}

\begin{proof}
The first statement is clear. 

For the second statement, we have $\Hom(\tilde{H},\Cx)=\widehat{\pi_1(X)}\times\widehat{(\Cx)^p}$. Consider the character map $\lambda_{\tilde{M}}$ restricted to $\widehat{(\Cx)^p}$:
\[\Z^p\simeq\widehat{(\Cx)^p}\ra \Pic(X), ~~\chi:=(e,\chi_{k_1}\cdots\chi_{k_p})\mapsto \tilde{M}\times_{\tilde{H}} \C_\chi=k_1L_1+\cdots+k_pL_p.\]
From the choice of $L_1,\ldots,L_p$ we know that $c_1(L_1),\ldots,c_1(L_p)$ are $\Z$-independent, which implies that $L_1,\ldots,L_p$ are also $\Z$-independent, meaning that the above map is an injection. Thus  we can conclude that \[\ker\lambda_{\tilde{M}}\simeq \ker \lambda_{\tilde{X}}\simeq\ker \iota^*_2.\]
\end{proof}



In particular, we can further conclude that 
\begin{prop}
If $X$ is compact K\"ahler and $\Pic(X)$ is finitely generated, then $\tilde{H}-\tilde{M}\ra X$ is a CY bundle whose character map is an isomorphism.
\end{prop}
\begin{proof}
When $\Pic(X)$ is finitely generated, $H^0(X,\Omega^1)=0.$ If $\Omega^1_c$ has a nontrivial global section, it will be  nontrivial in $H^0(X,\Omega^1)$ as well. Therefore $H^0(X, \Omega^1_c)=0$ and by Corollary \ref{kahler onto}, $\lambda_{\tilde{M}}$ is an isomorphism.
\end{proof}

\begin{rem}
Actually this statement holds without the assumption that $X$ is K\"ahler. It will be discussed in the next section.
\end{rem}

\bs

\section{Existence of CY bundles when $\Pic(X)$ is finitely generated}\label{sect-discrete}

We are going to study compact complex manifolds, not necessarily K\"ahler, with the assumption that their Picard groups are finitely generated.
We want to make use of our previous constructions,   
and the idea is to construct new principal bundles out of the old one.

\subsection{Principal bundles}

Given a holomorphic Lie group homomorphism $f: H\ra K$, it induces a homomorphism $\widehat{f}: \widehat{K}\ra\widehat{H},~~\chi\mapsto \chi\circ f$. The map $f$ also induces a holomorphic action of $H$ on $K$: $H\times K\ra K, ~~ (h,k)\mapsto f(h)\cdot k$.

If we are given a principal $H$-bundle $M$ over $X$, let $N:=M\times_H K$ where the action of $H$ on $K$ is induced by $f$. Then $N$ is a holomorphic $K$-bundle over $X$. Define a $K$-action on $N$ by right translation on fibers: $$K\times N\ra N, ~~(k,[m,l])\mapsto [m,l\cdot k^{-1}].$$
It is clear that this action preserves the fibers and acts freely and transitively on them. Thus with this action $N$ is a principal $K$-bundle over $X$, we call it {\it the principal $K$-bundle induced by $f$ and $M$}.

\begin{prop}\label{new bundle}
If $N$ is the principal $K$-bundle induced by $f$ and $M$, then the diagram of character maps
\[\xymatrix{\widehat{K}\ar[r]^{\lambda_N\quad}\ar[d]_{\widehat{f}}&\Pic(X)\\
\widehat{H}\ar[ru]_{\lambda_M}
}\]
commutes.
\end{prop}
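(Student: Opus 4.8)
The plan is to prove commutativity by exhibiting a natural isomorphism of holomorphic line bundles over $X$ between $\lambda_N(\chi)=N\times_K\C_\chi$ and $\lambda_M(\chi\circ f)=M\times_H\C_{\chi\circ f}$, for each $\chi\in\widehat K$. Conceptually this is just associativity of the balanced product: since $N=M\times_H K$, one expects $N\times_K\C_\chi=(M\times_H K)\times_K\C_\chi\simeq M\times_H(K\times_K\C_\chi)$, and $K\times_K\C_\chi\simeq\C_\chi$ carries exactly the $H$-action obtained by pulling the $K$-action back along $f$, namely the action defining $\C_{\chi\circ f}$. Rather than invoke this abstractly, I would write the isomorphism explicitly and check it by hand, which also pins down the conventions.

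Write $[m,k]$ for the class of $(m,k)$ in $N=M\times_H K$ and $[[m,k],c]$ for the class of $([m,k],c)$ in $N\times_K\C_\chi$. First I would define
\[
\Phi: N\times_K\C_\chi\ra M\times_H\C_{\chi\circ f},\qquad [[m,k],c]\mapsto[m,\chi(k)c],
\]
and verify that it descends through the two nested quotients. Under the outer $K$-relation $[[m,k],c]=[[m,k(k')^{-1}],\chi(k')c]$ the output is unchanged, since $\chi(k(k')^{-1})\chi(k')=\chi(k)$. Under the inner $H$-relation $[m,k]=[mh^{-1},f(h)k]$ (coming from $h\cdot m=mh^{-1}$ on $M$ and the $H$-action $f(h)\cdot k$ on $K$) one computes $\Phi([[mh^{-1},f(h)k],c])=[mh^{-1},(\chi\circ f)(h)\,\chi(k)c]$, which equals $[m,\chi(k)c]$ exactly by the defining relation of $M\times_H\C_{\chi\circ f}$. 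This last identity, where the factor $\chi(f(h))$ is reinterpreted as the value $(\chi\circ f)(h)$ of the pulled-back character, is the heart of the argument.

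After well-definedness, it remains to note that $\Phi$ commutes with the projections to $X$ (both sides project to $\pi(m)$) and is $\C$-linear in $c$ on each fiber, and that it is bijective with inverse $[m,c]\mapsto[[m,e_K],c]$, where $e_K$ is the identity of $K$; a short computation using the two relations shows these maps are mutually inverse. Being a fiberwise linear isomorphism that commutes with the quotient to $X$, $\Phi$ is an isomorphism of holomorphic line bundles, which is precisely the asserted commutativity $\lambda_N(\chi)\simeq\lambda_M(\chi\circ f)$. I expect the only real difficulty to be bookkeeping: keeping the two quotient relations and the left/right action conventions consistent, so that the relevant $H$-character emerges as $\chi\circ f$ rather than its inverse.
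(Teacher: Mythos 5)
Your proof is correct and is essentially the same as the paper's: the paper defines exactly the map $[[m,l],c]\mapsto[m,\chi(l)c]$ and verifies well-definedness under the same two quotient relations before concluding via fiberwise linearity and compatibility with the projections. Your additional remarks (the balanced-product associativity heuristic and the explicit inverse $[m,c]\mapsto[[m,e_K],c]$) are fine but not a different argument.
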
 
\begin{proof}
It suffices to show that given a character $\chi\in \widehat{K}$, there is a holomorphic line bundle isomorphism $N\times_K\C_\chi\simeq M\times_H\C_{\chi\circ f}.$
Define a map
\begin{align*}
\xi: (M\times_H K)\times_K \C_\chi&\ra M\times_H\C_{\chi\circ f}\\
[[m,l],c]&\mapsto [m,\chi(l)c]
\end{align*}
Since 
\begin{align*}
\xi([[mh^{-1},f(h)l],c])&=[mh^{-1}, \chi(f(h)l)c]=[mh^{-1}, \chi(f(h))\chi(l)c]=[m,\chi(l)c],\\
\xi([[m,lk^{-1}],\chi(k)c])&=[m,\chi(lk^{-1})\chi(k)c]=[m,\chi(l)c],
\end{align*}
$\xi$ is well defined. It is clear that $\xi$ induces a linear isomorphism on fibers and commutes with quotients to $X$, thus it is an isomorphism between holomorphic line bundles.
\end{proof}

\subsection{Principal $T_1$-bundles}
The abelianization of $\pi_1(X)$ gives a Lie group homomorphism $\text{Ab}: \pi_1(X)\ra\overline{\pi_1(X)}$.
Let $\bar{X}:=\tilde{X}\times_{\pi_1(X)} \overline{\pi_1(X)}$ be the principal
$\overline{\pi_1(X)}$-bundle induced by $\text{Ab}$ and $\tilde{X}$.
Then $\bar{X}$ is a cover of $X$ whose deck transformation group is $\overline{\pi_1(X)}$.
Proposition \ref{new bundle} implies the following diagram
\begin{equation}\label{abel-pi-1}
\xymatrix{\widehat{\overline{\pi_1(X)}}\ar[r]^{\lambda_{\bar{X}}}\ar[d]_{\widehat{\text{Ab}}}&\Pic(X)\\
            \widehat{\pi_1(X)}\ar[ru]_{\lambda_{\tilde{X}}}
}
\end{equation}
commutes.
Since $\Cx$ is abelian, $\widehat{\text{Ab}}$ is an isomorphism.

Since $\overline{\pi_1(X)}\simeq H_1(X)\simeq\Z^q\oplus T_1$, we choose an isomorphism $\phi:\overline{\pi_1(X)}\xrightarrow{\simeq} \Z^q\oplus T_1$. Let $\pr_2: \Z^q\oplus T_1\ra T_1$ denote the projection to the second factor.

 Then  we have a homomorphism $\pr_2\circ\phi: \overline{\pi_1(X)}\ra T_1$.
Let $T:=\bar{X}\times_{\overline{\pi_1(X)}} T_1$ be the principal $T_1$-bundle induced by $\pr_2\circ\phi$ and $\bar{X}$.
Then it follows from Proposition \ref{new bundle} that there is a commutative diagram 
\[\xymatrix{\widehat{T_1}\ar[r]^{\lambda_T}\ar[d]_{\widehat{\text{pr}_2\circ\phi}}&\Pic(X)\\
\widehat{\overline{\pi_1(X)}}.\ar[ru]_{\lambda_{\bar{X}}}
}\]
It is clear that $\widehat{\pr_2\circ\phi}$ gives an isomorphism from $\widehat{T_1}$ to $\Hom(T_1,\Cx)\subset \widehat{\overline{\pi_1(X)}}.$

Then by Proposition \ref{torsion}, $c_1\circ\lambda_T=c_1\circ (\lambda_{\bar{X}}\circ (\widehat{\pr_2\circ\phi}))$ gives an isomorphism from $\widehat{T_1}$ to $c_1(\Pic(X))_{\text{tor}}$.

If we assume that $\Pic(X)$ is finitely generated, then $\Pic_0(X)=0$ and $c_1$ is an injection. It further implies that $\lambda_T$ is an isomorphism from $\widehat{T_1}$ to $\Pic(X)_{\text{tor}}$.

\subsection{General case}
Assume that $\Pic(X)$ is finitely generated. Let $p$ be the rank of $\Pic(X)$ and $L_1,\cdots,L_p$ be holomorphic line bundles that generate  $\Pic(X)_{\text{free}}$.
 Let $M:=L_1^{\times}\oplus L_2^{\times}\oplus \cdots \oplus L_p^{\times}\oplus T$, then $M$ is a principal $((\Cx)^p\oplus T_1)$-bundle over $X$.
 
It is clear that $\lambda_M$ maps onto $\Pic(X)$ in this case. And since $$\Hom((\Cx)^p\oplus T_1,\Cx)\simeq \Z^p\oplus T_1\simeq \Pic(X),$$
$\lambda_M$ is an isomorphism.
\begin{thm}
Given a compact complex manifold $X$ (not necessarily K\"ahler),  if $\Pic(X)$ is finitely generated, then
there exists a CY bundle over $X$ whose character map is an isomorphism.
\end{thm}

\bs
\section{Existence of CY bundles for abelian structure groups}\label{sect-abelian}
In this section we are going to study CY bundles whose structure group is an abelian Lie group.


\subsection{Connected abelian complex Lie groups}
\begin{defn}
A toroidal group is an abelian complex Lie group on which every holomorphic function is constant.
\end{defn}
In particular, the character group of a toroidal group is trivial.

\bs
There is a very nice result on the classification of connected abelian complex groups by Remmert and by Morimoto {(cf. \cite{Mo} and \cite[p.6]{AK})}:
\begin{thm}[Decomposition of abelian Lie groups]
Every connected abelian complex Lie group is holomorphically isomorphic to a group of the form
\[(\Cx)^a\times\C^b\times G_0,\]
where $G_0$ is a toroidal group. The decomposition is unique.
\end{thm}

\subsection{Principal $\C^q$-bundles}
In our construction of CY bundles in Section \ref{sect-Pic},  
$\tilde{H}=\pi_1(X)\times(\Cx)^a$ is neither abelian nor connected in general. So we need to make changes to the previous construction. 

First we observe that:
\begin{prop}\label{abelianpi1}If $X$ is a compact K\"ahler manifold,
then the image of the character map of $\bar{X}$ 
contains $\Pic_0(X)$. Moreover, 
$\lambda_{\bar{X}}(\Hom(\Z^q,\Cx))=\Pic_0(X).$
\end{prop}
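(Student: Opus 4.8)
The plan is to realize $\bar X$ as the principal bundle induced from $\tilde X$ by the abelianization homomorphism $\text{Ab}:\pi_1(X)\to\overline{\pi_1(X)}$, and then transport the two known facts about $\lambda_{\tilde X}$ — namely Corollary \ref{same-map2} and Proposition \ref{free-to-pic_0} — across the resulting commuting triangle of character maps.

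First I would apply Proposition \ref{new bundle} with $H=\pi_1(X)$, $K=\overline{\pi_1(X)}$, $f=\text{Ab}$, and $M=\tilde X$, so that the induced bundle $N$ is precisely $\bar X=\tilde X\times_{\pi_1(X)}\overline{\pi_1(X)}$. This yields the commutative diagram
\[\xymatrix{\widehat{\overline{\pi_1(X)}}\ar[r]^{\quad\lambda_{\bar X}}\ar[d]_{\circ\,\text{Ab}}&\Pic(X)\\
\widehat{\pi_1(X)}\ar[ru]_{\lambda_{\tilde X}}}\]
that is, $\lambda_{\bar X}=\lambda_{\tilde X}\circ(\circ\,\text{Ab})$, where $\circ\,\text{Ab}:\Hom(\overline{\pi_1(X)},\Cx)\to\Hom(\pi_1(X),\Cx)$ is precomposition with $\text{Ab}$.

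The crucial observation is that $\circ\,\text{Ab}$ is an \emph{isomorphism}: since $\Cx$ is abelian, every homomorphism $\pi_1(X)\to\Cx$ factors uniquely through $\text{Ab}$, which is exactly the universal property of abelianization. Consequently $\im\lambda_{\bar X}=\im\lambda_{\tilde X}$, and the first assertion follows immediately from Corollary \ref{same-map2}, which gives $\Pic_0(X)\subseteq\im\lambda_{\tilde X}$.

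For the sharper equality I would check that $\circ\,\text{Ab}$ identifies the two copies of $\Hom(\overline{\pi_1(X)}_{\text{free}},\Cx)$. In the source, this subgroup consists of the characters of $\overline{\pi_1(X)}$ that factor through the projection $\overline{\pi_1(X)}\to\overline{\pi_1(X)}_{\text{free}}$ killing the torsion; precomposing with $\text{Ab}$ sends these to exactly the characters of $\pi_1(X)$ that factor through $\pi_1(X)\to\overline{\pi_1(X)}\to\overline{\pi_1(X)}_{\text{free}}$, which is how the subgroup embeds in $\widehat{\pi_1(X)}$. Hence $\circ\,\text{Ab}$ restricts to the canonical identification on these subgroups, giving
\[\lambda_{\bar X}\big(\Hom(\overline{\pi_1(X)}_{\text{free}},\Cx)\big)=\lambda_{\tilde X}\big(\Hom(\overline{\pi_1(X)}_{\text{free}},\Cx)\big)=\Pic_0(X),\]
where the last equality is Proposition \ref{free-to-pic_0}. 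The only point demanding care is the bookkeeping that $\circ\,\text{Ab}$ respects the free-part subgroups; everything else is a formal transport of statements along an isomorphism, so I anticipate no serious obstacle.
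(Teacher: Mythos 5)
Your proposal is correct and follows essentially the same route as the paper: invoke Proposition \ref{new bundle} for the abelianization map to get the commuting triangle, note that precomposition with $\text{Ab}$ is an isomorphism because $\Cx$ is abelian, and then transport Corollary \ref{same-map2} and Proposition \ref{free-to-pic_0} across it. Your extra remark on checking that $\circ\,\text{Ab}$ matches up the free-part subgroups is a point the paper passes over silently, but it is the same argument.
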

\begin{proof}
Look at diagram \eqref{abel-pi-1}:
\[\xymatrix{\widehat{\overline{\pi_1(X)}}\ar[r]^{\lambda_{\bar{X}}}\ar[d]_{\widehat{\text{Ab}}}^\simeq&\Pic(X)\\
            \widehat{\pi_1(X)}.\ar[ru]_{\lambda_{\tilde{X}}}
}\]
From Corollary \ref{same-map2} we know that $\Pic_0(X)$ is contained in the image of $\lambda_{\tilde{X}}$, thus the image of $\lambda_{\bar{X}}$ contains $\Pic_0(X)$ as well.

Proposition \ref{free-to-pic_0} shows further that $\lambda_{\tilde{X}}(\Hom(\Z^q,\Cx))=\Pic_0(X)$, composing with the isomorphism $\widehat{\text{Ab}}$ we have 
$\lambda_{\bar{X}}(\Hom(\Z^q,\Cx))=\Pic_0(X).$
\end{proof}



Let $\pr_1: \overline{\pi_1(X)}\simeq\Z^q\oplus T\ra\Z^q$ be the projection to the first factor.
 Then  we have a homomorphism $\pr_1\circ\phi: \overline{\pi_1(X)}\ra \Z^q$.
Let $N:=\bar{X}\times_{\overline{\pi_1(X)}} \Z^q$ be the principal $\Z^q$-bundle induced by $\pr_1\circ\phi$ and $\bar{X}$.
Then Proposition \ref{new bundle} gives a commutative diagram 
\[\xymatrix{\widehat{\Z^q}\ar[r]^{\lambda_N}\ar[d]_{\widehat{\text{pr}_1\circ\phi}}&\Pic(X)\\
\widehat{\overline{\pi_1(X)}}.\ar[ru]_{\lambda_{\bar{X}}}
}\]
It is clear that $\widehat{\pr_1\circ\phi}$ induces an isomorphism from $\widehat{\Z^q}$ to $\Hom(\Z^q,\Cx)\subset  \widehat{\overline{\pi_1(X)}}$. Then by Proposition \ref{abelianpi1} $\Pic_0(X)$ is contained in the image of the composition map $\lambda_N$.

Next we consider the inclusion $\iota: \Z^q\ra\C^q$. Let $Q:=N\times_{\Z^q}\C^q$ be the principal $\C^q$-bundle induced by $\iota$ and $N$. Then from Proposition \ref{new bundle} we have a commutative diagram
\[\xymatrix{\widehat{\C^q}\ar[r]^{\lambda_Q\quad}\ar[d]_{\widehat\iota}&\Pic(X)\\
\widehat{\Z^q}.\ar[ru]_{\lambda_N}
}\]
Since $\Cx$ is a divisible group, $\Hom(-,\Cx)$ is an exact contravariant functor. 
So $\widehat{ \iota}$ is surjective and thus $\Pic_0(X)$ is contained in the image of the character map $\lambda_Q$.

To sum up, we have:
\begin{prop}\label{Cq-exist}
Let $X$ be a compact K\"ahler manifold and $q$ be the rank of $\overline{\pi_1(X)}$. Then there exists a principal $\C^q$-bundle over $X$ such that the image of its character map contains $\Pic_0(X)$.
\end{prop}

\subsection{General case} 
We now assemble results of the preceding sections to construct CY bundles over $X$ with arbitrary abelian structure groups.

Since $\Pic(X)/\Pic_0(X)\simeq c_1(\Pic(X))\subset H^2(X)$, it is finitely generated. Let ${p'}\in\Z$ be the minimum number of generators of the whole group $c_1(\Pic(X))$ and let $\{c_1(L_1),\ldots,c_1(L_{p'})\}$ be a set of generators. 
Let $q$ be the rank of $\overline{\pi_1(X)}$ and $Q$ be the principal $\C^q$-bundle over $X$ such that  $\Pic_0(X)\subset \im\lambda_Q$, as described in the previous subsection. Let $P:=L_1^{\times}\oplus L_2^{\times}\oplus \cdots \oplus L_{p'}^{\times}\oplus Q$, 
then it is a principal $((\Cx)^{p'}\times\C^q)$-bundle over $X$.

\begin{prop} If $X$ is compact K\"ahler,  then $P$ defined as above is a CY $((\Cx)^{p'}\times\C^q)$-bundle whose character map is onto.
\end{prop}

\begin{proof}

First, we can identify
$\widehat{(\Cx)^{p'}\times\C^q}\equiv\widehat{(\Cx)^{p'}}\times \widehat{\C^q}.$

By assumption on $L_1,\ldots,L_{p'}$, there exist integers $k_1,\ldots,k_{p'}$ such that $c_1(L)=k_1c_1(L_1)+\cdots+k_{p'}c_1(L_{p'})$. Thus
$$L_0:=L-(k_1L_1+\cdots+k_{p'}L_{p'})\in\Pic_0(X).$$

By Proposition \ref{Cq-exist}, there exists $\gamma\in \widehat{\C^q}$ such that
$$L_0\simeq Q\times_{\C^q}\C_{\gamma}.$$
Hence
\[P\times_{(\Cx)^{p'}\times\C^q}\C_{(\chi_{k_1}\cdots\chi_{k_{p'}},\gamma)}\simeq k_1L_1+\cdots+k_{p'}L_{p'}+L_0=L.\]
I.e., $\lambda_P(\chi)=L$
where $\chi=(\chi_{k_1}\cdots\chi_{k_{p'}},\gamma)$. 

Thus $\lambda_P$ is onto and therefore $P$ is a CY bundle over $X$.
\end{proof}

\begin{thm}
Let $X$ be a compact K\"ahler manifold. If $H$ is holomorphically isomorphic to $(\Cx)^a\times\C^b\times G_0$ for some
$a\geq {p'}$ and $b\geq q$, where ${p'},q$ are defined as above. Then there exists a CY $H$-bundle over $X$ whose character map is onto.
\end{thm}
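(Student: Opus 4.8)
The plan is to realize the desired $H$-bundle as a bundle induced from the $((\Cx)^p\times\C^q)$-bundle $P$ constructed above, which we already know to be CY with surjective character map. Writing $H_0:=(\Cx)^p\times\C^q$, the idea is to choose a holomorphic homomorphism $f:H_0\ra H$ so that the induced principal $H$-bundle $N:=P\times_{H_0}H$ of Proposition \ref{new bundle} inherits surjectivity of the character map from $P$. By that proposition the character maps fit into a commuting triangle, so $\lambda_N=\lambda_P\circ(\circ f)$, where $\circ f:\widehat H\ra\widehat{H_0}$ is the dual homomorphism; since $\lambda_P$ is onto, it suffices to arrange that $\circ f$ is onto.

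First I would define $f$ explicitly, using the hypotheses $a\geq p$ and $b\geq q$. Namely, let $f$ send $(\Cx)^p$ isomorphically onto the first $p$ factors of $(\Cx)^a$, send $\C^q$ isomorphically onto the first $q$ factors of $\C^b$, and be trivial into the $G_0$ factor:
\[
f(s,t)=\big((s,1,\ldots,1),(t,0,\ldots,0),e_{G_0}\big),\qquad s\in(\Cx)^p,\ t\in\C^q.
\]
This is a holomorphic Lie group homomorphism, so Proposition \ref{new bundle} applies: $N=P\times_{H_0}H$ is a principal $H$-bundle and $\lambda_N=\lambda_P\circ(\circ f)$.

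The key computation is the surjectivity of $\circ f$, and here the Cousin factor plays the decisive role. Since $G_0$ is a Cousin group it has no nonconstant holomorphic functions; in particular any holomorphic character $G_0\ra\Cx\subset\C$ is a constant holomorphic function, hence equal to $1$, so $\widehat{G_0}=1$. Consequently $\widehat H\simeq\widehat{(\Cx)^a}\times\widehat{\C^b}\simeq\Z^a\times\C^b$ while $\widehat{H_0}\simeq\Z^p\times\C^q$, and under these identifications $\circ f$ is precisely the coordinate projection $\Z^a\times\C^b\ra\Z^p\times\C^q$ onto the first $p$ integer and first $q$ complex coordinates. Because $a\geq p$ and $b\geq q$, this projection is surjective, so $\circ f$ is onto, and therefore $\lambda_N=\lambda_P\circ(\circ f)$ is onto.

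Finally, since $K_X\in\Pic(X)=\im\lambda_N$, the obstruction criterion (Theorem \ref{obstruction thm}) furnishes a CY structure on $N$, so $N$ is a CY $H$-bundle whose character map is onto, as claimed. The only genuine subtlety is the triviality of $\widehat{G_0}$: it is what makes the Cousin summand invisible to the character map, and hence what allows $G_0$ to be completely arbitrary. Everything else is a routine assembly of the induced-bundle formalism of Proposition \ref{new bundle} together with the bundle $P$ and the existence result for the $\C^q$-factor (Theorem \ref{Cq-exist}).
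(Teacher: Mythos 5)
Your proposal is correct and is essentially the paper's own argument: inducing $P$ along the inclusion $(\Cx)^p\times\C^q\hookrightarrow(\Cx)^a\times\C^b\times G_0$ yields the same bundle as the paper's Whitney sum of $P$ with the trivial $((\Cx)^{a-p}\times\C^{b-q}\times G_0)$-bundle followed by transport along the isomorphism $\psi$, and in both cases surjectivity of the character map follows because the extra group factors contribute nothing new. One minor quibble: the triviality of $\widehat{G_0}$ is not actually the decisive point --- since your $f$ is trivial into the $G_0$ factor, $\circ f$ would surject onto $\widehat{(\Cx)^p\times\C^q}$ even if $G_0$ had nontrivial characters --- but this does not affect the correctness of your argument.
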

\begin{proof}
The preceding proposition tells us that $P$ defined above is a CY $((\Cx)^{p'}\times\C^q)$-bundle whose character map is onto. Let $$N:=P\oplus\bigg(X\times\big((\Cx)^{a-{p'}}\times\C^{b-q}\times G_0\big) \bigg).$$
Then $N$ is a principal $((\Cx)^a\times\C^b\times G_0)$-bundle.

Since $X\times((\Cx)^{a-{p'}}\times\C^{b-q}\times G_0)$ is the trivial $((\Cx)^{a-{p'}}\times\C^{b-q}\times G_0)$-bundle over $X$, its character map is trivial.
It follows that the image of the character map of $N$ is the same as that of $P$, 
thus $N$ is a CY $((\Cx)^a\times\C^b\times G_0)$-bundle whose character map is onto.

Let $\psi: (\Cx)^a\times\C^b\times G_0\ra H$ be a holomorphic isomorphism. 
Let $M:=N\times_{(\Cx)^a\times\C^b\times G_0}H$ be the principal $H$-bundle induced by $\psi$ and $N$.

The by Proposition \ref{new bundle} the following diagram is commutative
\[\xymatrix{\widehat{H}\ar[r]^{\lambda_M}\ar[d]_{\widehat \psi} &Pic(X)\\
\widehat{(\Cx)^a\times\C^b\times G_0}.\ar[ur]_{\lambda_N}
}\]

Since $\psi$ is an isomorphism, $\widehat\psi$ is an isomorphism as well. Thus the image of $\lambda_M$ is the same as that of $\lambda_N$, i.e. $\lambda_M$ is onto.
\end{proof}

\bs

\begin{rem}
The rigidity theorem is not necessarily true if $H=\C$. 
If we have principal bundles $\C-M_i\ra X$, $i=1,2$ and the following diagram
\[\xymatrix{
\widehat{\C}\simeq\C\ar[r]^{\lambda_{M_1}}\ar[d]^{\simeq}_{\xi}&\Pic(X)\\
\widehat{\C}\simeq\C\ar[ur]_{\lambda_{M_2}}
}\]
commutes for some $\xi\in \Aut (\widehat{\C})\simeq \Aut(\C)$,
then the conclusion that $M_1$ and $M_2$ are isomorphic up to a twist is not necessarily true.

For example if we assume that $\Pic(X)$ is discrete, then both character maps $\lambda_{M_1}$ and $\lambda_{M_2}$ have to be trivial. It implies that no matter what $\xi\in\Aut(\C)$ we choose, the above diagram is always commutative. So the above diagram gives no information about whether $M$ and $N$ are isomorphic or not.
\end{rem}

\bs

\section{Acknowledgement}
Part of this paper is the first author's PhD dissertation research project. She owe thanks to the Brandeis Math Department for years of support during her PhD study as well as the hospitality while visiting after her graduation. She would like to thank Professors A. Mayer, D. Ruberman, A. Huang and S.-T. Yau for helpful discussions. Her research is partially supported by a Special Financial Grant from the China Postdoctoral Science Foundation with grant number 2016T90080.

 B.H. Lian would like to thank S. Hosono, A. Huang and S.-T. Yau for helpful discussions over the course of this work. He also thanks the Tsinghua YMSC for hospitality during his recent visits there.
His research is partially supported by an NSF FRG grant MS-1564405 and a Simons Foundation Collaboration Grant(2015-2019).


\vskip.1in

\noindent\address {\SMALL J. Chen, Yau Mathematical Sciences Center, Tsinghua University, Beijing, China 100084.\\ jychen@math.tsinghua.edu.cn.}


\noindent\address {\SMALL B.H. Lian, Department of Mathematics, Brandeis University, Waltham MA, United States 02454.\\ lian@brandeis.edu.}

\end{document}